\theoremstyle{plain}
\newtheorem{theorem}{Theorem}
\newtheorem{corollary}[theorem]{Corollary}
\newtheorem{proposition}[theorem]{Proposition}
\theoremstyle{definition}
\newtheorem{example}[]{Example}
\theoremstyle{remark}
\newcommand{\floor}[1]{\left\lfloor #1 \right\rfloor}
\newcommand{\ceil}[1]{\left\lceil #1 \right\rceil}
\newcommand{\paren}[1]{\left( #1 \right)}
\begin{document}

\title{Card Dealing Math}
\author{Eric Huang}
\author{Timur Kilybayev}
\author{Ryan Li}
\author{Brandon Ni}
\author{Leone Seidel}
\author{Samarth Sharma}
\author{Vivek Varanasi}
\author{Alice Yin}
\author{Boya Yun}
\author{William Zelevinsky}
\affil{PRIMES STEP}
\author{Tanya Khovanova}
\author{Nathan Sheffield}
\affil{MIT}
\date{}
\maketitle

\begin{abstract}
Various card tricks involve under-down dealing, where alternatively one card is placed under the deck and the next card is dealt. We study how the cards need to be prepared in the deck to be dealt in order. The order in which the $N$ cards are prepared defines a permutation.

In this work, we analyze general dealing patterns, considering properties of the resulting permutations. We give recursive formulas for these permutations, their inverses, the final dealt card, and the dealing order of the first card. We discuss some particular examples of dealing patterns and conclude with an analysis of several existing and novel magic card tricks making use of dealing patterns. Our discussions involve 30 existing sequences in the OEIS, and we introduce 44 new sequences to that database.

\emph{Key words and phrases: Josephus problem, card dealing, card magic.}

\emph{2020 Mathematics Subject Classification: Primary 11A99.}
\end{abstract}

\section{Introduction}

A number of card tricks involve what is known as ``under-down dealing'', or the related down-under dealing, also called ``Australian dealing''. Here, the performer alternates between cycling the top card to the bottom of the deck (placing it ``under'') and dealing the top card of the deck (placing it ``down''). This procedure is typically continued until all cards are dealt.  By understanding the order in which this procedure deals cards, the performer can produce ``magical'' effects. For instance, tricks such as the ``love ritual'' \cite{aragon14} use under-down dealing to ensure that a chosen card always ends up dealt last, despite the fact that the audience has some control over the number of cards in the deck.

The under-down dealing corresponds directly to the classical Josephus problem. In the Josephus problem, people numbered $1, \dots, N$ are arranged in a circle. The procedure starts with person number 1, and, going clockwise, every second person is eliminated until only one person remains. Observe that the order in which people are eliminated is exactly the same as the order in which a sorted deck of cards would be dealt in ``under-down dealing''.

In this paper, we discuss generalizations of under-down dealing to other dealing patterns --- for instance, down-under, under-under-down, etc. Equivalently, we study variants of the Josephus problem where, instead of skipping one person between every elimination, we follow some other pattern of skips and eliminations. For each pattern we consider, we are interested in the total number of moves, the order in which cards must be initially arranged to be dealt in order, the order in which all cards are dealt, the final card to be dealt, and when the top card is dealt. We provide recursive formulas for general patterns, using under-down and down-under dealings as our main examples.

\subsection{Outline}

We start with basic definitions and notation in Section~\ref{sec:prel}. For any pattern, we consider several different associated sequences:
\begin{itemize}
    \item In Section~\ref{sec:nummoves}, we consider the total number of moves (i.e., instances of either placing a card under or down) required to deal $N$ cards.
    \item In Section~\ref{sec:T}, we consider the order of cards such that dealing according to the pattern would produce cards in order. We combine all these patterns into a triangle, and look at the triangle as a sequence.
        \item In Section~\ref{sec:J}, we consider the permutations performed by dealing $N$ cards --- i.e., the order of cards dealt according to the pattern if the cards begin in order. The corresponding triangle is known as the Josephus triangle.
    \item In Section~\ref{sec:freed}, we consider the final card dealt, which is equivalent to the freed person in the Josephus problem.
    \item In Section~\ref{sec:first}, we consider the top card in the prepared deck, which is equivalent to the order of elimination of the first person in the Josephus problem.
\end{itemize}

In each of these sections, we give recursive formulas demonstrating how the sequences and triangles depending on a particular pattern change upon prepending the pattern with either a skip or an elimination. We also discuss how the formulas change after one round of dealing.

We culminate with Section~\ref{sec:sequences}, where we list the sequences that we study. In this paper, we mention 30 existing sequences from the database and discuss 44 new sequences. The new sequences are marked in bold.

In Section~\ref{sec:examples}, we look at examples of particular dealing patterns, such as general periodic patterns, including dealing every $x$th card and patterns of period 3. We also discuss patterns where the number of cards skipped increases by one each time, or depends on the English spellings of numbers or cards. We end the section with more details and explicit formulas related to our main pattern: $UD$.

In Section~\ref{sec:specialdecksizes}, we look at special deck sizes, where for a given pattern, the freed person is the first or the last. This section provides examples of deck sizes useful for magic tricks.

In Section~\ref{sec:magic}, we conclude with a discussion of various existing and new card tricks that make use of under-down and other dealings.

\subsection{Background and related work}

The Josephus problem originates in the first century CE, in the works of Jewish historian Flavius Josephus~\cite{josephus2004great}. In one telling, a group of besieged soldiers decides to die rather than surrender. They stand in a circle and proceed around clockwise, each person killing the person to their immediate left. The story goes that Josephus positioned himself to be the final remaining person after all of the killings and then surrendered to the Romans instead of taking his own life.

Variants of the Josephus problem have appeared since in a number of instances throughout history~\cite{schumer2002josephus}. In mathematical treatments, the focus has been on finding efficient algorithms and non-recursive formulas to determine the $k$th eliminated person when every $x$th person is eliminated \cite{graham1994concrete, lloyd83algorithm, wilf91functional}. There have also been other variants of the problem, including the ``feline'' version, where each person has a fixed number of ``lives'' that must be removed before they are actually eliminated~\cite{ruskey2012feline, sullivan2018variant, ariyibi2019generalizations}.

Some patterns other than executing every $x$th person have also been considered. Beatty and Sullivan first observed the connection to under-down dealing and proposed the ``Texas chainsaw massacre'' version of the Josephus problem, wherein alternatively $x$ people are executed and one is skipped~\cite{sullivan2012structured}. This variant has seen further study and has also been applied to the case where alternatively $x$ people are executed and $y$ are skipped~\cite{sullivan2018variant, park2018serial, ariyibi2019generalizations}.

The appropriate mathematical generalization of Aragon's ``Love Ritual'' routine (popularized by Penn and Teller) was first described by Park and Teixeira~\cite{teixeira2017mathematical}.

\section{Preliminaries}
\label{sec:prel}

\subsection{Dealing patterns}

We consider a deck of $N$ face-down cards, numbered 1 through $N$. Our dealing consists of two types of moves: in an \textit{under} move, the top card is placed at the bottom of the deck, while in a \textit{down} move, the top card is placed on the table face-up. We denote an under move with the letter $U$ and a down move with the letter $D$.

We call an infinite sequence $P$ of letters $U$ and $D$ the \textit{dealing pattern}. When the pattern is periodic, we will often refer to it simply by the periodic portion as shorthand. For instance, the famous ``under-down deal'' has pattern $P = UDUDUDUD\dots$, which we can write in short $P=UD$, and call \textit{$UD$-dealing}. Similarly, the ``down-under deal'', in which one instead begins with a down move, has pattern $P = DUDUDUDU\dots = DU$, which we call in short $DU$-dealing. In this paper, we study a general pattern $P$, which may have a period longer than $2$, or be aperiodic, and use patterns $UD$ and $DU$ as our main examples.

\subsection{Notation}

We denote by $d_i(P)$, correspondingly $u_i(P)$, the index of the $i$th instance of $D$, correspondingly $U$, in pattern $P$. Sometimes, we use notation $d_i$ and $u_i$ to reduce clutter when it is clear what $P$ is. We assume that all our patterns are infinite. Moreover, we assume that $P$ has an infinite number of occurrences of $D$, so that $d_i$ is always defined; otherwise, we cannot deal out large decks.

When we deal the cards in the deck according to a pattern $P$, we will finish at some point. The last deal corresponds to the $N$th letter $D$ in pattern $P$, which has index $d_N$. 

Given a deck of $N$ cards, we call the first $N$ moves a \textit{dealing round}. After a dealing round, we start reusing the cards. The card that is on top is the first card that went under. Suppose we start with pattern $P$, then the pattern for dealing after a round starts with the string where the first $N$ letters are removed from pattern $P$.

For example, suppose our initial pattern is $UD$; then, if we have an even number of cards, after one round, our pattern stays $UD$; otherwise, it becomes $DU$. We can make a similar statement if our starting pattern is $DU$.

We denote by $P_m$ a prefix of $P$ containing $m$ letters. When dealing $N$ cards, we only need to know $P_{d_N}$. We also denote by $|P_m|_D$ and $|P_m|_U$ the number of occurrences of $D$ and $U$ in the prefix $P_m$ correspondingly. In particular, we have $|P_m|_D + |P_m|_U = m$.

We denote by $DP$ (resp. $UP$) the pattern where $D$ (resp. $U$) is prepended in front of $P$. For example, if $P = UD$ and $P' = DU$, then $P = UP'$ and $P' = DP$.

\subsection{Connection to magic}

We started this paper after we learned the following trick. A magician prepares a deck of cards of the same suit, and then performs the $UD$-dealing, revealing that the cards are dealt in order. How is the deck prepared?

We will discuss the formulas later, but for practical purposes, one might use the following procedure.

We lay the cards on the table in order and put them back into the deck in the reverse order of the dealing. We take the string $P_{d_N}$, and starting with the last letter and going backward, we build a deck using the following rules.

\begin{itemize}
\item If the letter is $D$, take the highest-numbered card not in the deck yet and put it face down on top.
\item If the letter is $U$, then move the card on the bottom to the top.
\end{itemize}

\section{Number of moves}
\label{sec:nummoves}

We denote the total number of moves required to deal all of the cards of a given deck size $N$ using pattern $P$ as $M^P(N)$. Note that this always equals the number of letters in the prefix $P_{d_N}$.

\begin{proposition}
    For any pattern $P$, we have $M^P(N)=d_N$.
\end{proposition}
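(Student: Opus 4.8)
The plan is to unwind the definitions. Dealing proceeds by executing the letters of $P$ one at a time: a $D$ move removes exactly one card from the deck (dealing it face-up onto the table), while a $U$ move leaves the deck's size unchanged. So, having executed the first $m$ moves — the prefix $P_m$ — the deck contains $N - |P_m|_D$ cards, provided the dealing has not already stopped; and the dealing stops precisely when the deck becomes empty.

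The argument then has two short steps. First, I would check that the process never halts prematurely: as long as strictly fewer than $N$ down moves have been made, we have $|P_m|_D < N$, so the deck is nonempty and the next letter of $P$ — whether $U$ or $D$ — can legitimately be executed (in particular a $U$ move applied to a one-card deck is a legal, if vacuous, move). Since $P$ contains infinitely many $D$'s by our standing assumption, the dealing does reach its $N$th down move, and it halts immediately afterwards, because at that instant $|P_m|_D = N$ and the deck is empty. Second, I would identify the move count: by the definition of $d_N$ as the index of the $N$th occurrence of $D$ in $P$, this $N$th down move sits at position $d_N$, so the executed moves are exactly the letters of the prefix $P_{d_N}$. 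Hence $M^P(N)$, being the number of those moves, equals $d_N$.

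I do not expect a genuine obstacle here; the only point requiring care is the edge case just mentioned — confirming that the deck cannot run out before the $N$th $D$ is reached and that a $U$ on a singleton deck is still a move — and this is handled by the invariant "deck size $= N - |P_m|_D \geq 1$ whenever fewer than $N$ cards have been dealt.'' As an alternative phrasing one could induct on $N$, using $M^P(N) = d_1 + M^{P'}(N-1)$ where $P'$ is $P$ with its first $d_1$ letters deleted, but this introduces extra bookkeeping about the shifted pattern $P'$ and the direct counting argument above is cleaner.
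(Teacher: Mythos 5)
Your argument is correct and is essentially the paper's proof, just spelled out: the dealing runs exactly through the prefix ending at the $N$th occurrence of $D$, so the move count is $d_N$. The extra verification that the deck cannot empty before the $N$th down move is a fine (if not strictly needed) elaboration of the same observation.
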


\begin{proof}
    The last deal corresponds to the index of the $N$th occurrence of $D$.
\end{proof}

In several parts of this paper, we consider how properties of a pattern change after prepending the pattern with a single $U$ or a single $D$. The following simple observations show how the values $d_i$, $u_i$, $|P_i|_D$ and $|P_i|_U$ change under these operations.

\subsection{Prepending a pattern \texorpdfstring{$P$}{P} with \texorpdfstring{$D$}{D} or \texorpdfstring{$U$}{U}}

Here is how $|P_i|_D$ and $|P_i|_U$ change upon prepending the pattern with a single letter:
\[|P_i|_D = |DP_{i+1}|_D - 1 = |UP_{i+1}|_D \quad \textrm{ and } \quad |P_i|_U = |DP_{i+1}|_U = |UP_{i+1}|_U - 1.\]
Because of the symmetry between $D$ and $U$, the equations for $u$ are the same as for $d$ with $D$ and $U$ swapped. Similarly, for $d_i$ and $u_i$ we have

\[d_i(P) = d_{i+1}(DP) - 1 = d_i(UP) - 1 \quad \textrm{ and } \quad u_i(P) = u_{i}(DP) - 1 = u_{i+1}(UP) - 1.\]

In particular, this tells us how the number of moves changes when we prepend $D$ or $U$.

\begin{proposition}
We have 
\[M^{DP}(N) = M^{P}(N-1)+1 \quad \textrm{ and } \quad M^{UP}(N) = M^{P}(N)+1.\] 
\end{proposition}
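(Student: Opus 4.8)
The plan is to derive both identities directly from the preceding proposition $M^P(N) = d_N$ together with the two prepending relations for $d_i$ recorded just above. By that proposition, $M^{DP}(N) = d_N(DP)$ and $M^{UP}(N) = d_N(UP)$, so the entire statement reduces to expressing $d_N(DP)$ and $d_N(UP)$ in terms of the quantities $d_i(P)$.

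For the $U$ case I would use the identity $d_i(P) = d_i(UP) - 1$, i.e. $d_i(UP) = d_i(P) + 1$, and specialize to $i = N$; this gives $d_N(UP) = d_N(P) + 1$, hence $M^{UP}(N) = M^P(N) + 1$. The intuition worth stating is that prepending a $U$ merely inserts one extra under-move at the very front, dealing no card and neither creating nor destroying any occurrence of $D$, so the $N$th occurrence of $D$ is simply pushed one position to the right. For the $D$ case I would use $d_i(P) = d_{i+1}(DP) - 1$, i.e. $d_{i+1}(DP) = d_i(P) + 1$, and specialize to $i = N-1$; this yields $d_N(DP) = d_{N-1}(P) + 1 = M^P(N-1) + 1$. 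Here prepending a $D$ both shifts everything right by one and creates a new first occurrence of $D$, so that the $N$th $D$ of $DP$ corresponds to the $(N-1)$st $D$ of $P$.

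The only point needing a word of care is the boundary case $N = 1$ of the $D$ identity, where $d_0(P)$ is not literally defined; I would adopt the natural convention $M^P(0) = 0$ (dealing zero cards requires zero moves) and note that $M^{DP}(1) = d_1(DP) = 1 = M^P(0) + 1$ is then consistent. I do not anticipate any genuine obstacle: once the previous proposition and the prepending relations are in hand, the argument is a one-line substitution in each case, and the only thing to be vigilant about is the index bookkeeping ($i$ versus $i+1$) and this trivial edge case.
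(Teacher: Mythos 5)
Your proposal is correct and follows the paper's own route: both reduce the claim to $M^P(N)=d_N$ combined with the prepending relations $d_{i+1}(DP)=d_i(P)+1$ and $d_i(UP)=d_i(P)+1$, with the paper leaving the index substitution implicit while you spell it out (and also handle the harmless $N=1$ edge case). No differences of substance.
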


\begin{proof}
We know $M^P(N) = d_N$, so this follows from the previous observation.
\end{proof}

\begin{example}
\label{ex:dN}
We have $M^{UD}(N) = 2N$ and $M^{DU}(N) = 2N-1$.
\end{example}

\subsection{Recursing by one round of dealing} 
We can also write the following recursive formula, this time removing the first $N$ letters as opposed to just the first $1$. Removing $N$ letters corresponds to one round of dealing. Thus, the formulas are especially easy.

\begin{proposition}
    We have
    \[M^{P}(0) = 0 \quad \textrm{ and } \quad M^P(N) = M^{P'}(N - |P_N|_D) + N\]
    where $P'$ is the pattern $P$ with the first $N$ letters removed.
\end{proposition}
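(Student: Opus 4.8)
The plan is to reduce the claim to the identity $M^P(N) = d_N$ established at the start of this section, turning the statement into a purely combinatorial fact about where the $N$th letter $D$ sits in $P$ after we split $P$ into its length-$N$ prefix $P_N$ followed by the tail $P'$.

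For the base case I would simply observe that dealing a deck of $0$ cards needs no down-moves, hence no moves at all; equivalently $d_0 = 0$ under the natural convention that the $0$th occurrence of $D$ has index $0$. This gives $M^P(0) = 0$.

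For the recursion, fix $N \ge 1$ and write $k = |P_N|_D$, so that $0 \le k \le N$ and $P_N$ contains exactly the first $k$ occurrences of $D$ in $P$. The key point is that deleting the first $N$ letters of $P$ shifts the index of every later $D$ down by exactly $N$: for each $j \ge 1$, the $(k+j)$th occurrence of $D$ in $P$ is the $j$th occurrence of $D$ in $P'$, sitting at index $N + d_j(P')$. Taking $j = N - k$ yields $d_N(P) = N + d_{N-k}(P')$, which is exactly $M^P(N) = N + M^{P'}(N - |P_N|_D)$. The degenerate case $k = N$, where $P_N = DD\cdots D$ and no cards remain after the first round, is consistent with this since then $d_N = N$ and $M^{P'}(0) = 0$; and $M^{P'}$ is well-defined throughout because $P$, and hence $P'$, has infinitely many $D$'s.

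I expect the only delicate part to be this bookkeeping around the edge case $k = N$ and the convention $d_0 = 0$; the combinatorics itself is immediate once those are pinned down. As a sanity check, I would reconcile the formula with the operational description of a dealing round given in the preliminaries: the first $N$ moves deal $|P_N|_D$ cards and cycle the remaining $N - |P_N|_D$ cards back to the top in their original relative order, leaving a genuine deck of $N - |P_N|_D$ cards to be dealt under pattern $P'$, so the total move count is $N + M^{P'}(N - |P_N|_D)$.
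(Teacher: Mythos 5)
Your argument is correct, and it takes a slightly different route than the paper. The paper proves the recursion operationally: perform one full dealing round ($N$ moves), note that $N - |P_N|_D$ cards remain and that the remaining dealing follows $P'$, hence the extra $M^{P'}(N - |P_N|_D)$ moves. You instead reduce everything to the earlier identity $M^P(N) = d_N(P)$ and prove the purely combinatorial fact $d_N(P) = N + d_{N-|P_N|_D}(P')$ by tracking how deleting the length-$N$ prefix shifts the indices of later $D$'s, with the edge case $|P_N|_D = N$ and the convention $d_0 = 0$ handled explicitly. Your version buys a proof that never has to reason about the state of the leftover deck (it is just index bookkeeping on the pattern string), and it is arguably more careful about the all-$D$ prefix case, which the paper glosses over; the paper's version is shorter and more intuitive because it leans directly on the notion of a dealing round from the preliminaries. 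Your closing ``sanity check'' paragraph is in fact exactly the paper's proof, so you have both arguments in hand; either one suffices.
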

\begin{proof}
    By definition, it takes $0$ moves to deal $0$ cards. For $N > 0$, we can first perform one full round of dealing ($N$ moves). This will leave $N - |P_N|_D$ cards remaining, which will take $ M^{P'}(N - |P_N|_D)$ additional moves to deal.
\end{proof}

\section{The dealing triangle}
\label{sec:T}

\subsection{Definition and examples}
A simple trick one can do with dealing patterns is to begin with the cards arranged such that, when dealt according to the pattern, they are dealt in order. Given a dealing pattern $P$, for every $N$, there is some permutation of the numbers $1$ through $N$ such that if the cards begin in that order, they will be dealt in increasing order. We can arrange those permutations in a triangle, letting the $N$th row represent the sequence for a deck of $N$ cards. We denote this triangle $T^P$, and call it the \textit{dealing triangle}. We denote the term in row $N$ and column $k \leq N$ of the dealing triangle $T$ as $T_{N,k}$.

Throughout this paper, we use $UD$ and $DU$ dealing patterns as our primary examples. To reduce clutter, we denote $T^{UD}$ as $T$ and $T^{DU}$ as $T'$. These dealing triangles are shown below in Table~\ref{tab:UDdealing} and Table~\ref{tab:DUdealing}, respectively.
Triangle $T$ in the sequence form is sequence \textbf{A378635}, while triangle $T'$ is sequence \textbf{A378674} in the OEIS \cite{OEIS}.

\begin{table}[ht!]
\begin{minipage}{0.48\textwidth}
\begin{center}
\begin{tabular}{ c c c c c c c c c c}
1  &   &    &   &    &   &    &   &    &   \\
2  & 1 &    &   &    &   &    &   &    &   \\
2  & 1 & 3  &   &    &   &    &   &    &    \\
4  & 1 & 3  & 2 &    &   &    &   &    &   \\
3  & 1 & 5  & 2 & 4  &   &    &   &    &   \\
5  & 1 & 4  & 2 & 6  & 3 &    &   &    &   \\
4  & 1 & 6  & 2 & 5  & 3 & 7  &   &    &   \\
8  & 1 & 5  & 2 & 7  & 3 & 6  & 4 &    &   \\
5  & 1 & 9  & 2 & 6  & 3 & 8  & 4 & 7  &   \\
8  & 1 & 6  & 2 & 10 & 3 & 7  & 4 & 9  & 5 \\
%6  & 1 & 9  & 2 & 7  & 3 & 11 & 4 & 8  & 5 & 10 &   &    &   &    & \\
%11 & 1 & 7  & 2 & 10 & 3 & 8  & 4 & 12 & 5 & 9  & 6 \\
%7  & 1 & 12 & 2 & 8  & 3 & 11 & 4 & 9  & 5 & 13 & 6 & 10 \\
%11 & 1 & 8  & 2 & 13 & 3 & 9  & 4 & 12 & 5 & 10 & 6 & 14 & 7 \\
%8  & 1 & 12 & 2 & 9  & 3 & 14 & 4 & 10 & 5 & 13 & 6 & 11 & 7 & 15 \\
%16 & 1 & 9  & 2 & 13 & 3 & 10 & 4 & 15 & 5 & 11 & 6 & 14 & 7 & 12 & 8
\end{tabular}
\end{center}
\caption{Under-down dealing triangle $T$}
\label{tab:UDdealing}
\end{minipage}
\hfill
\begin{minipage}{0.48\textwidth}
\begin{center}
\begin{tabular}{c c c c c c c c c c c c c c c}
1 \\
1 & 2 \\
1 & 3 & 2 \\
1 & 3 & 2 & 4 \\
1 & 5 & 2 & 4 & 3 \\
1 & 4 & 2 & 6 & 3 & 5 \\
1 & 6 & 2 & 5 & 3 & 7 & 4 \\
1 & 5 & 2 & 7 & 3 & 6 & 4 & 8 \\
1 & 9 & 2 & 6 & 3 & 8 & 4 & 7 & 5 \\
1 & 6 & 2 & 10 & 3 & 7 & 4 & 9 & 5 & 8 \\
%1 & 9 & 2 & 7 & 3 & 11 & 4 & 8 & 5 & 10 & 6 \\
%1 & 7 & 2 & 10 & 3 & 8 & 4 & 12 & 5 & 9 & 6 & 11 \\
%1 & 12 & 2 & 8 & 3 & 11 & 4 & 9 & 5 & 13 & 6 & 10 & 7 \\
%1 & 8 & 2 & 13 & 3 & 9 & 4 & 12 & 5 & 10 & 6 & 14 & 7 & 11 \\
%1 & 12 & 2 & 9 & 3 & 14 & 4 & 10 & 5 & 13 & 6 & 11 & 7 & 15 & 8 \\
\end{tabular}
\end{center}
\caption{Down-under dealing triangle $T'$}
\label{tab:DUdealing}
\end{minipage}
\end{table}

\subsection{Prepending a pattern \texorpdfstring{$P$}{P} with \texorpdfstring{$D$}{D} or \texorpdfstring{$U$}{U}}

We can describe the triangles $T^{DP}$ and $T^{UP}$ in terms of $T^{P}$.

\begin{proposition}
\label{prop:triangleprepend}
We have
\[T_{N,k}^{DP} = \begin{cases}
	1, & \text{ if } k=1; \\
	T_{N-1,k-1}^{P} + 1, & \text{ if } k > 1.
	\end{cases}
\]
We also have
\[T_{N, k}^{UP} = \begin{cases}
        T_{N, N}^{P}, & \text{ if } k=1; \\
        T_{N, k-1}^{P}, & \text{ if } k>1.\\
       \end{cases}
\]
\end{proposition}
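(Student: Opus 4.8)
The plan is to unwind the definition of the dealing triangle directly: $T^{P}_{N,k}$ is defined by the property that if the card labelled $T^{P}_{N,k}$ is placed in position $k$ of the deck (for all $k$), then dealing an $N$-card deck according to $P$ produces the labels $1,2,\dots,N$ in that order. Equivalently, $T^{P}_{N,k}$ is the deal-rank of the card starting in position $k$: the card in position $k$ is the $T^{P}_{N,k}$-th card to be dealt down. So I will track what the first move of the pattern does to position indices and to the set of cards yet to be dealt, and then invoke the value of $T^{P}$ on the resulting configuration.

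For the $UP$ case, the first move is an $U$: the top card (position $1$) goes to the bottom, so the deck of $N$ cards is cyclically rotated. No card is dealt, and afterwards we deal $N$ cards according to $P$. Under this rotation, the card that was in position $1$ now sits in position $N$, and the card that was in position $k$ for $k>1$ now sits in position $k-1$. Since nothing has been dealt, the deal-rank of each card is unchanged, so the deal-rank of the card now in position $j$ (under pattern $P$, deck size $N$) equals the deal-rank of the card originally in the position that rotated to $j$. That gives $T^{UP}_{N,1} = T^{P}_{N,N}$ and $T^{UP}_{N,k} = T^{P}_{N,k-1}$ for $k>1$, as claimed. I would phrase this cleanly using $u_i(P) = u_{i+1}(UP)-1$ and $d_i(P)=d_i(UP)-1$ from the earlier observations if a more formal justification is wanted, but the positional argument is the conceptual heart.

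For the $DP$ case, the first move is a $D$: the top card is dealt immediately, hence it is card $1$ (deal-rank $1$), giving $T^{DP}_{N,1}=1$. After this deal, we have an $(N-1)$-card deck — the old positions $2,3,\dots,N$ are now positions $1,2,\dots,N-1$ — and we deal it according to $P$, but the labels we now want to read off are $2,3,\dots,N$ rather than $1,2,\dots,N-1$. So the card in (new) position $k-1$ is the $(T^{P}_{N-1,k-1})$-th remaining card dealt, i.e.\ it has overall deal-rank $T^{P}_{N-1,k-1}+1$; translating back to the original position $k$ gives $T^{DP}_{N,k}=T^{P}_{N-1,k-1}+1$ for $k>1$. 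Again this can be backed up by $d_i(P)=d_{i+1}(DP)-1$.

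I do not expect a serious obstacle here; the only thing requiring care is making the bookkeeping precise — namely, clearly stating the interpretation of $T^{P}_{N,k}$ as a deal-rank (the inverse-permutation viewpoint) so that "shifting labels by $1$" and "cyclically shifting positions" are manifestly the right operations, and confirming that a single $U$ move genuinely just rotates the deck without consuming a card. A reader who prefers could instead verify both recursions against Table~\ref{tab:UDdealing} and Table~\ref{tab:DUdealing} (e.g.\ $T = T^{UD} = T^{U(DU)}$ built from $T' = T^{DU}$, and $T' = T^{DU} = T^{D(UD)}$ built from $T$), which is a useful sanity check but not part of the proof.
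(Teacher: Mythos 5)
Your proof is correct and takes essentially the same approach as the paper: the $D$ case deals card $1$ and reduces to $P$-dealing on $N-1$ cards with labels shifted by one, and the $U$ case is a cyclic rotation of positions, hence a cyclic shift of the triangle row. Your explicit "deal-rank" (inverse-permutation) bookkeeping is just a more detailed phrasing of the paper's argument.
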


\begin{proof}
To perform $DP$-dealing, we will first deal the first card. After that, we will be left to perform $P$-dealing. Our remaining deck has numbers 2 to $N$ in order. This is equivalent to the $P$-dealing with the deck of size $N-1$, where every card number is increased by 1.

In pattern $UP$, we move the top card under before dealing cards according to $P$. After the first move, we now have cards cycled to the left by 1: $\{2, 3, \dots, N, 1\}$. Thus, the corresponding triangle cycles by 1, too.
\end{proof}

In other words, if we remove the first column from triangle $T^{DP}$, then subtract 1 from every element, we get the triangle $T^{P}$. And, if we cycle the triangle $T^{UP}$ to the left, we get the triangle $T^{P}$.

\subsection{Examples for patterns \texorpdfstring{$UD$}{UD} and \texorpdfstring{$DU$}{DU}}
\label{sec:dealing-triangle-examples}

Proposition~\ref{prop:triangleprepend} applied to this case allows us to describe how the triangles $T = T^{UD}$ and $T' = T^{DU}$ are built starting from the first row.

\begin{proposition}
\label{prop:trianglebuilding}
We have $T_{1,1} = T'_{1,1} = 1$. For $N > 1$, we have
\[T_{N,1} = T_{N-1,N-1} + 1 \quad \textrm{ and } \quad T_{N,2} = 1\]
\[T'_{N,1} = 1 \quad \textrm{ and } \quad T'_{N,2} = T'_{N-1,N-1} + 1.\]
In addition, for $k > 2$, we have
\[T_{N,k} = T_{N-1,k-2}+1 \quad \textrm{ and } \quad T'_{N,k} = T'_{N-1,k-2}+1 .\]
\end{proposition}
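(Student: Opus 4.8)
The plan is to derive everything from Proposition~\ref{prop:triangleprepend} by exploiting the two decompositions $UD = U(DU)$ and $DU = D(UD)$. In the notation of the preliminaries, setting $P = UD$ and $P' = DU$, we have $P = UP'$ and $P' = DP$, so Proposition~\ref{prop:triangleprepend} applies directly to relate $T = T^{UP'}$ and $T' = T^{DP}$ to each other.

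First I would record the ``single prepended letter'' relations. Applying the $UP$ part of Proposition~\ref{prop:triangleprepend} to $T = T^{UP'}$ gives $T_{N,1} = T'_{N,N}$ and $T_{N,k} = T'_{N,k-1}$ for $k>1$. Applying the $DP$ part to $T' = T^{DP}$ gives $T'_{N,1} = 1$ and $T'_{N,k} = T_{N-1,k-1}+1$ for $k>1$. The second of these already yields $T'_{N,1}=1$, one of the claimed identities, and the base case $T_{1,1}=T'_{1,1}=1$ follows since a one-card deck admits only the trivial arrangement (or directly: $T'_{1,1}=1$ and $T_{1,1}=T'_{1,1}$).

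Then I would compose the two relations to eliminate the ``cross'' triangle. For $T$: the case $k=1$ gives $T_{N,1} = T'_{N,N}$, and since $N>1$ the $DP$-relation applies to give $T'_{N,N} = T_{N-1,N-1}+1$, hence $T_{N,1} = T_{N-1,N-1}+1$; the case $k=2$ gives $T_{N,2} = T'_{N,1} = 1$; and for $k>2$, $T_{N,k} = T'_{N,k-1}$ with $k-1>1$, so $T'_{N,k-1} = T_{N-1,k-2}+1$, i.e.\ $T_{N,k} = T_{N-1,k-2}+1$. Symmetrically, for $T'$: the case $k=2$ gives $T'_{N,2} = T_{N-1,1}+1$, and $T_{N-1,1} = T'_{N-1,N-1}$ by the $UP'$-relation (valid since $N-1\ge 1$), so $T'_{N,2} = T'_{N-1,N-1}+1$; and for $k>2$, $T'_{N,k} = T_{N-1,k-1}+1$ with $k-1>1$, so $T_{N-1,k-1} = T'_{N-1,k-2}$, giving $T'_{N,k} = T'_{N-1,k-2}+1$.

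The argument is essentially routine once Proposition~\ref{prop:triangleprepend} is available; it is a short chain of substitutions rather than an induction. The only thing to be careful about is the index bookkeeping --- matching the case split ($k=1$, $k=2$, $k>2$) across the two compositions and checking that every sub-index that appears (the row index $N-1$, the column index $k-2$, and the ``wrap'' index $N$) stays within the valid range of the triangle. This is precisely where the hypothesis $N>1$ and the separate treatment of the first two columns are needed, so I would flag that as the one place the write-up must be explicit.
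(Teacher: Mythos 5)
Your proof is correct and follows essentially the same route as the paper: both apply Proposition~\ref{prop:triangleprepend} twice via the decompositions $T = T^{UP'}$ and $T' = T^{DP}$ (with $P = UD$, $P' = DU$) and compose the resulting relations, with the same case split $k=1$, $k=2$, $k>2$. Your index bookkeeping is accurate (indeed, the paper's displayed chains contain a couple of harmless typos that your version avoids), so nothing further is needed.
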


\begin{proof}
The first row is always 1. Suppose $N > 1$, then, applying Proposition~\ref{prop:triangleprepend} twice, we get
\[T_{N,1} = T'_{N,N} + 1= T_{N-1,N-1} + 1 \quad \textrm{ and } \quad T'_{N,1} = 1\]
and
\[T_{N,2} = T'_{N,1} = 1 \quad \textrm{ and } \quad T'_{N,2} = T_{N-1,1} +1 = T'_{N-1,N-1} + 1.\]
For $k>2$ we get
\[T_{N,k} = T'_{N,k-1} = T_{N-1,k-2} \quad \textrm{ and } \quad T'_{N,k} = T_{N,k-1} + 1 = T'_{N-1,k-2}.\qedhere\]
\end{proof}

\begin{example}
Row 4 in $T$ is 4132; prepending it with 0, we get 04132; moving the last value to the front, we get 20413; adding 1, we get 31524, which is the next row in the same triangle.
\end{example}

The last property of the proposition describes slanted diagonals that we see in both triangles.

Proposition~\ref{prop:trianglebuilding} allows us to express every element in tables $T$ and $T'$ through its values in the first two columns.

\begin{corollary}
\label{cor:tablereduction}
We have 
\[T_{N,2k} = T_{N,2} + k-1 = k \quad \textrm{ and } \quad T_{N,2k+1} = T_{N-k,1} + k\]
and
\[T'_{N,2k} = T'_{N,2} + k-1 \quad \textrm{ and } \quad T'_{N,2k+1} = T'_{N-k,1} + k = k+1.\]
\end{corollary}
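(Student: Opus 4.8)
The plan is to prove all four identities by one device: iterating the slanted-diagonal recurrence from Proposition~\ref{prop:trianglebuilding}, i.e.\ $T_{N,k}=T_{N-1,k-2}+1$ and $T'_{N,k}=T'_{N-1,k-2}+1$ for $k>2$, together with the base values $T_{N,2}=1$ and $T'_{N,1}=1$ that the same proposition supplies. Each application of the recurrence drops the column index by $2$ and the row index by $1$, so from an entry in column $c$ one can slide down its diagonal until the column first reaches $1$ (if $c$ is odd) or $2$ (if $c$ is even) --- precisely the ``reduction to the first two columns'' promised before the statement.

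For an even column I would take $T_{N,2k}$ with $2k\le N$ (so $k\ge 1$) and apply the recurrence $k-1$ times. The columns visited are $2k,2k-2,\dots,4$, all exceeding $2$, so each step is legal; the row index falls from $N$ to $N-k+1$, which stays $\ge 2$ since $2k\le N$. This gives $T_{N,2k}=T_{N-k+1,2}+(k-1)$ and likewise $T'_{N,2k}=T'_{N-k+1,2}+(k-1)$. For $T$ every column-$2$ entry is $1$, so the first collapses to $T_{N,2k}=k=T_{N,2}+k-1$; for $T'$ the column-$2$ entries vary with the row, so the correct form keeps the shifted row index (the displayed ``$T'_{N,2}$'' in the statement should read ``$T'_{N-k+1,2}$'', the two agreeing at $k=1$; e.g.\ Table~\ref{tab:DUdealing} has $T'_{10,4}=10=T'_{9,2}+1$, whereas $T'_{10,2}+1=7$).

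For an odd column I would take $T_{N,2k+1}$ with $2k+1\le N$ and apply the recurrence $k$ times; the columns visited are $2k+1,2k-1,\dots,3$ (all $>2$), the final step landing in column $1$, and the row index falls to $N-k\ge 1$. This gives $T_{N,2k+1}=T_{N-k,1}+k$ and $T'_{N,2k+1}=T'_{N-k,1}+k$; since $T'_{M,1}=1$ for every $M$, the latter equals $k+1$. The case $k=0$ is the empty iteration $T_{N,1}=T_{N,1}$.

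The computation is routine, and I expect no real obstacle; the only things to watch are that the recurrence requires column index $>2$, so the iteration must stop exactly at column $2$ or $1$, that the small cases $k\in\{0,1\}$ are empty iterations, and that all invoked entries are genuinely in the triangle (guaranteed by $2k\le N$, resp.\ $2k+1\le N$, which keeps every intermediate row index in range). To avoid ellipses I would phrase ``apply the recurrence $m$ times'' as a one-line induction on $k$.
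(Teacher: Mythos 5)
Your proof is correct and is essentially the paper's (implicit) argument: the corollary is presented as an immediate consequence of Proposition~\ref{prop:trianglebuilding}, obtained exactly as you do by iterating the slanted-diagonal recurrence $T_{N,k}=T_{N-1,k-2}+1$ (and its $T'$ analogue) down the diagonal until the column index reaches $2$ or $1$, with the index bookkeeping you spell out. You are also right to flag the even-column identity for $T'$: iterating the recurrence $k-1$ times gives $T'_{N,2k}=T'_{N-k+1,2}+k-1$, not $T'_{N,2}+k-1$ as printed, and your counterexample checks against Table~\ref{tab:DUdealing} ($T'_{10,4}=10=T'_{9,2}+1$, whereas $T'_{10,2}+1=7$); the two forms coincide for $T$ only because every column-$2$ entry of $T$ equals $1$. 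This correction does not affect anything downstream, since later sections only invoke the $T$ identities $T_{N,2k}=k$ and $T_{N,2k+1}=T_{N-k,1}+k$, which you establish as stated.
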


That means we can understand the properties of table $T$ if we understand the properties of the first column. As we mentioned before, triangle $T'$ could be expressed through triangle $T$.

\subsection{Recursing by one round of dealing}

When we need the triangle for a particular pattern, we can build it by using the following theorem.

\begin{theorem}
\label{thm:trianglerecursion}
We have
\[T_{N, d_k}^{P} = k \quad \textrm{ and } \quad T_{N, u_k}^{P} = T^{P'}_{|P_N|_U,k} + |P_{N}|_D,
\]
where $P'$ is the pattern $P$ with the first $N$ letters removed.
\end{theorem}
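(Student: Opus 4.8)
The plan is to analyze a single \emph{round} of dealing directly (the first $N$ moves), rather than peeling off one letter of the pattern at a time. Write the initial deck, from top to bottom, as the cards in positions $1, 2, \ldots, N$, so by definition position $j$ holds the card $T^P_{N,j}$. I would first establish the following invariant about the first $N$ moves: for each $m$ with $1 \le m \le N$, immediately before move $m$ the deck consists, from top to bottom, of the cards still sitting in positions $m, m+1, \ldots, N$, followed by those cards among positions $1, \ldots, m-1$ whose position is a $U$-index of $P$, listed in increasing order of position. This is proved by a one-line induction on $m$: move $m$ acts on the top card, which is precisely the card from position $m$; if $P_m = D$ that card is removed, and if $P_m = U$ it is appended to the bottom, and in either case the invariant for $m+1$ follows. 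The one point needing a moment of care is that within the first $N$ moves we never ``wrap around'' to reuse a card that has already been cycled under, since positions $m, \ldots, N$ are still present whenever $m \le N$; this is exactly what makes the round argument clean.

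With the invariant in hand, both formulas drop out. For the first: if $m = d_k \le N$, then move $m$ is the $k$th occurrence of $D$, so it deals the card from position $d_k$; since cards come out in the order $1, 2, 3, \ldots$, that card is $k$, hence $T^P_{N, d_k} = k$. For the second, apply the invariant ``at $m = N+1$'', i.e., after the full round: the $|P_N|_D$ cards removed during the round are exactly $1, 2, \ldots, |P_N|_D$ in order, and the remaining deck has size $|P_N|_U$ and consists, from top to bottom, of the cards originally in positions $u_1 < u_2 < \cdots < u_{|P_N|_U}$.

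Now I would invoke uniqueness: for a fixed pattern and deck size there is exactly one arrangement that deals out in increasing order, because dealing is a bijection on arrangements. After the round the surviving deck must be dealt, according to $P'$ (the pattern with the first $N$ letters deleted), in the order $|P_N|_D + 1, |P_N|_D + 2, \ldots, N$. By the defining property of $T^{P'}$, the unique size-$|P_N|_U$ deck that $P'$ deals as $1, 2, \ldots, |P_N|_U$ has the card $T^{P'}_{|P_N|_U, j}$ in its $j$th position; since dealing depends only on positions, applying the order-preserving relabeling that adds $|P_N|_D$ to every label shows the unique deck that $P'$ deals as $|P_N|_D + 1, \ldots, N$ has $T^{P'}_{|P_N|_U, j} + |P_N|_D$ in position $j$. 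Matching this against the surviving deck described above, the card originally in position $u_j$ equals $T^{P'}_{|P_N|_U, j} + |P_N|_D$, i.e., $T^P_{N, u_j} = T^{P'}_{|P_N|_U, j} + |P_N|_D$, as claimed.

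The main obstacle is simply pinning down the invariant precisely --- in particular that the cards cycled under retain their original relative order and occupy exactly the $U$-indexed positions, and that the first round involves no reuse of cards; once that bookkeeping is in place, the rest is the uniqueness argument together with a relabeling, with no genuine computation to grind through. An alternative route would be to iterate Proposition~\ref{prop:triangleprepend} $N$ times to strip the prefix $P_N$ one letter at a time, but tracking how $d_k$, $u_k$, $|P_N|_D$, and the residual pattern transform at each step is messier than this direct round argument.
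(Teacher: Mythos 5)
Your proof is correct and follows essentially the same route as the paper: analyze the first full round of dealing, observe that the $k$th $D$ deals the card originally at position $d_k$ (so that card is $k$), and that the surviving deck of $|P_N|_U$ cards must be the row of $T^{P'}$ shifted by $|P_N|_D$. You simply make explicit the bookkeeping (the deck-state invariant and the uniqueness/relabeling step) that the paper's shorter proof leaves implicit.
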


\begin{proof}
Consider the first round of dealing. We are dealing cards face up when the corresponding letter is $D$, which means the card in the $d_k$th place has to be $k$. After that, the number of cards in the deck is reduced by the number of $D$s among the first $N$ letters of $P$, which is $|P_{N}|_D$. We start the new round with the pattern $P'$. Thus, the leftover cards will be distributed in the same way as row $N- |P_{N}|_D$, where we take into account that cards are in the range starting from $|P_{N}|_D + 1$. The rest follows.
\end{proof}

Consider, for example, patterns $UD$ and $DU$. The theorem above allows us to express the corresponding triangles $T$ and $T'$.

\begin{example}
We have
    \begin{align*}
        T_{n,2k} &= k 				& T'_{n,2k+1} &= k+1 \\
        T_{2m,2k+1}  &= T_{m,k+1}  + m 	& T'_{2m,2k} &= T'_{m,k}+m \\
        T_{2m+1,2k+1}  &= T'_{m+1,k+1}+m 	& T'_{2m+1,2k} &= T_{m,k} + (m+1).
    \end{align*}
Suppose we want to calculate  $T_{13, 11}$. There are six $D$s among the first 13 letters of the pattern $UD$. Thus, $T_{13,11} = T'_{7, 6} + 6$, now $T'_{7, 6} = T_{3,3} + (3+1)$, and $T_{3,3} = T'_{2,2} + 1$, while $T'_{2,2}=T'_{1,1} + 1 = 2$, implying that $T_{13,11} = 6+4+1 + 2 = 13$.
\end{example}

\section{The Josephus triangle}
\label{sec:J}

\subsection{The Josephus problem}

Our card-dealing question is related to the Josephus problem. In the Josephus game, $N$ people are standing in a circle waiting to be executed. Counting begins at point 1 in the circle and proceeds around the circle in a specified direction. In the classical variant, after one person is skipped, the next person is executed. The procedure is repeated with the remaining people, starting with the next person, going in the same direction, and skipping one person until only one person remains and is freed.

The classical game corresponds to the $UD$-dealing. We can have the game for any dealing pattern, where for each $U$, we skip a person, and for each $D$, we execute a person.

The game corresponds to our card-dealing trick. Suppose we number people according to row $N$ in triangle $T^P$; then the execution is equivalent to putting down a card. Thus, the person who is executed on $k$th turn corresponds to the card with value $k$. Thus, the row $N$ in table $T^P$ corresponds to numbering people in the circle with their order of execution.

\subsection{Definitions}

In addition to the dealing triangle $T^P$, we can also define another important triangle, which we denote $J^P$ and call \textit{the Josephus triangle}. If we think of each row of $T^P$ as representing a permutation (i.e., the permutation $x \mapsto T_{N,x}^P$), the corresponding row of the Josephus triangle is defined to be the inverse permutation. In terms of the Josephus problem, if we think of the people as numbered $1$ through $N$, the $N$th row lists the people's numbers in the order in which they are executed. So, for instance, if person 4 is eliminated second, the second entry of the row is 4 (i.e.~$J^P_{N,2} = 4$).

As in the previous section, we will take as examples the triangles $J = J^{UD}$ and $J' = J^{DU}$. They are described by sequences A321298 and A378982 in the OEIS and are shown in the Tables~\ref{tab:JUD} and \ref{tab:JDU}. The main diagonals form the sequences $F^{UD}$ and $F^{DU}$, respectively.

\begin{table}[ht!]
\begin{minipage}{0.48\textwidth}
\begin{center}
\begin{tabular}{cccccccc}
1 &   &   &   &   &   &   &   \\
2 & 1 &   &   &   &   &   &   \\
2 & 1 & 3 &   &   &   &   &   \\
2 & 4 & 3 & 1 &   &   &   &   \\
2 & 4 & 1 & 5 & 3 &   &   &   \\
2 & 4 & 6 & 3 & 1 & 5 &   &   \\
2 & 4 & 6 & 1 & 5 & 3 & 7 &   \\
2 & 4 & 6 & 8 & 3 & 7 & 5 & 1 
\end{tabular}
\caption{The Josephus triangle for the $UD$-dealing}
\label{tab:JUD}
\end{center}
\end{minipage}
\hfill
\begin{minipage}{0.48\textwidth}
\begin{center}
\begin{tabular}{c c c c c c c c c c c c c c c}
1 \\
1 & 2 \\
1 & 3 & 2 \\
1 & 3 & 2 & 4 \\
1 & 3 & 5 & 4 & 2 \\
1 & 3 & 5 & 2 & 6 & 4 \\
1 & 3 & 5 & 7 & 4 & 2 & 6 \\
1 & 3 & 5 & 7 & 2 & 6 & 4 & 8 \\
%1 & 3 & 5 & 7 & 9 & 4 & 8 & 6 & 2 \\
%1 & 3 & 5 & 7 & 9 & 2 & 6 & 10 & 8 & 4 \\
%1 & 3 & 5 & 7 & 9 & 11 & 4 & 8 & 2 & 10 & 6 \\
%1 & 3 & 5 & 7 & 9 & 11 & 2 & 6 & 10 & 4 & 12 & 8 \\
%1 & 3 & 5 & 7 & 9 & 11 & 13 & 4 & 8 & 12 & 6 & 2 & 10 \\
%1 & 3 & 5 & 7 & 9 & 11 & 13 & 2 & 6 & 10 & 14 & 8 & 4 & 12 \\
%1 & 3 & 5 & 7 & 9 & 11 & 13 & 15 & 4 & 8 & 12 & 2 & 10 & 6 & 14 \\
\end{tabular}
\caption{The Josephus triangle for $DU$-pattern}
\label{tab:JDU}
\end{center}
\end{minipage}
\end{table}

\subsection{Prepending a pattern \texorpdfstring{$P$}{P} with \texorpdfstring{$D$}{D} or \texorpdfstring{$U$}{U}}

As in the case of the dealing triangle, the Josephus triangle follows simple recursive relationships upon prepending the dealing pattern with a $U$ or $D$.

\begin{theorem}
We have the following expression of triangle $J^{DP}$ in terms of triangle $J^P$: 
\[ J^{DP}_{N,k}= \begin{cases}
	1, & \text{ if } k=1; \\
	J^{P}_{N-1,k-1} + 1, & \text{ if } k > 1.
\end{cases}\]
For triangle $J^{UP}$, we have \begin{align*}
    J^{UP}_{N, k} = 
    \begin{cases} 
      1, & \text{ if } J^{P}_{N, k} = N;\\
      J^{P}_{N, k} + 1, & \text{ if } J^{P}_{N, k} < N.
    \end{cases}
\end{align*}
\end{theorem}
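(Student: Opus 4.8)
The plan is to leverage the already-established recursion for the dealing triangle (Proposition~\ref{prop:triangleprepend}) together with the fact that $J^P$ is by definition the row-wise inverse permutation of $T^P$. The key principle is elementary: if $\sigma$ and $\tau$ are permutations related by $\tau(x) = \sigma(g(x))$ for some bijection $g$ on indices, then $\tau^{-1}(y) = g^{-1}(\sigma^{-1}(y))$; and if $\tau(x) = f(\sigma(x))$ for a bijection $f$ on values, then $\tau^{-1} = \sigma^{-1}\circ f^{-1}$. The $D$-prepend rule changes values (shift by $1$, with a new value $1$ inserted in column $1$), while the $U$-prepend rule changes positions (cyclic shift). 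So I expect the $DP$ case to come from inverting a value-relabeling and the $UP$ case to come from inverting a position-permutation.

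First I would handle $J^{DP}$. From Proposition~\ref{prop:triangleprepend}, for $k=1$ we have $T^{DP}_{N,1}=1$, so the card in position $1$ is dealt first, giving $J^{DP}_{N,1}=1$. For $k>1$, $T^{DP}_{N,k} = T^{P}_{N-1,k-1}+1$, meaning that if $T^{P}_{N-1,j}=i$ then $T^{DP}_{N,j+1}=i+1$. Reading this as a statement about inverses: the card dealt $i$th in the size-$(N-1)$ deck sits in position $J^P_{N-1,i}$; under $DP$ on the size-$N$ deck, the card dealt $(i+1)$st sits one slot to the right, namely position $J^P_{N-1,i}+1$. Hence $J^{DP}_{N,i+1} = J^P_{N-1,i}+1$ for $i\ge 1$, which is exactly the claimed formula after reindexing $k=i+1$. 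I would write this out carefully, making explicit that column index $1$ in row $N-1$ of $T^P$ maps to column index $2$ in row $N$ of $T^{DP}$, etc., so the $+1$ on positions is correct.

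Next, for $J^{UP}$, Proposition~\ref{prop:triangleprepend} says $T^{UP}$ is obtained from $T^P$ by a left cyclic shift of each row: $T^{UP}_{N,1}=T^P_{N,N}$ and $T^{UP}_{N,k}=T^P_{N,k-1}$ for $k>1$. In permutation terms, if the card in position $p$ of the size-$N$ deck is dealt $k$th under pattern $P$ (i.e.\ $J^P_{N,k}=p$, equivalently $T^P_{N,p}=k$), then under $UP$ that same card has moved to position $p-1$ if $p>1$, and to position $N$ if $p=1$; and it is still the $k$th card dealt. So $J^{UP}_{N,k}$ equals $J^P_{N,k}-1$ when $J^P_{N,k}>1$, and equals $N$ when $J^P_{N,k}=1$. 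Comparing with the stated formula, which is phrased in terms of whether $J^P_{N,k}=N$ or $<N$, I would note these are the same statement under the value substitution coming from the inverse: the stated version tracks the \emph{value} $J^{P}$ and adds $1$ (with wraparound $N\mapsto 1$), whereas my derivation subtracts $1$ on positions; I should double-check the direction of the cyclic shift. Since $T^{UP}$ cycles rows \emph{left}, a card that was in position $j$ of $P$-dealing is in position $j-1$ of $UP$-dealing — but we could equally say the card dealt $k$th under $P$ becomes the card dealt $k$th under $UP$ sitting in the shifted slot, so the bookkeeping must land on "add $1$ mod $N$ to the \emph{inverse value}" exactly as written; I would verify this on the tables ($J_{5,\cdot}$ vs.\ $J_{5,\cdot}$ after appropriate prepend) to be sure I have not flipped a sign.

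The main obstacle I anticipate is purely notational: keeping straight which of the two triangles is being indexed by position versus by deal-order, and getting the direction of the "$+1$ versus $-1$, with wraparound" correct for the $UP$ case. There is no deep content — everything follows formally from Proposition~\ref{prop:triangleprepend} by inverting permutations — but it is easy to introduce an off-by-one or a reversed cyclic shift, so I would anchor the argument with one explicit small example from Tables~\ref{tab:JUD} and \ref{tab:JDU} for each of the two cases before committing to the general formula. I would also remark that, unlike the dealing-triangle recursion where the $D$-prepend dropped a row index ($N\mapsto N-1$), here the $U$-prepend keeps the row index $N$ fixed (consistent with Proposition~\ref{prop:triangleprepend}), and this asymmetry is the reason the two cases of the theorem look structurally different.
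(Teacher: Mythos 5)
Your overall strategy---recovering the $J$ recursions by inverting the $T$ recursions of Proposition~\ref{prop:triangleprepend}---is sound, and your treatment of the $DP$ case is correct (the paper argues directly from the dealing process instead, but the content is the same). The problem is the $UP$ case: the derivation you actually wrote concludes $J^{UP}_{N,k}=J^P_{N,k}-1$ when $J^P_{N,k}>1$ and $J^{UP}_{N,k}=N$ when $J^P_{N,k}=1$. That is not ``the same statement under a value substitution'' as the theorem; it is the inverse shift, and it is false. Concretely, take $P=DU$, so that $UP=UD$, and $N=3$: row $3$ of $J^{DU}$ is $(1,3,2)$ and row $3$ of $J^{UD}$ is $(2,1,3)$, consistent with ``add $1$ mod $N$'' and inconsistent with your ``subtract $1$ mod $N$,'' which would predict $(3,2,1)$. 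Having derived the wrong formula, you then assert that ``the bookkeeping must land on'' the stated one and defer to checking tables; as written, the $UP$ half of the theorem is assumed, not proved.

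The source of the sign error is a direction mix-up: $J^P_{N,k}$ is the position of the $k$th-dealt card in the deck \emph{after} the initial under move (equivalently, in the $P$-deck), and what $J^{UP}_{N,k}$ records is its position in the \emph{original} deck, so you must apply the inverse of the left shift, i.e.\ add $1$ modulo $N$. Formally, the fix is one line of the inversion you already set up: $J^{UP}_{N,k}$ is the unique $j$ with $T^{UP}_{N,j}=k$; by Proposition~\ref{prop:triangleprepend} either $j=1$ and $T^P_{N,N}=k$ (i.e.\ $J^P_{N,k}=N$), or $j>1$ and $T^P_{N,j-1}=k$, giving $j=J^P_{N,k}+1$, which is exactly the claimed formula. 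With that correction the proposal becomes a complete proof, differing from the paper only in that it routes through the dealing triangle rather than describing the residual deck $2,\dots,N,1$ directly.
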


\begin{proof}
To see the first relationship, observe that once the first card is dealt, we are left with $N-1$ cards numbered $2, \dots, N$. This is equivalent to dealing according to $P$ on cards $1, \dots, N-1$, but with all values increased by $1$.

For the second formula, observe that skipping the first person and dealing according to pattern $P$ is equivalent to dealing with pattern $P$ on cards $2, \dots, N, 1$. Thus, to compute $J^{UP}_{N,k}$ we can compute $J^{P}_{N,k}$ and then add $1 \pmod N$.
\end{proof}

By definition, the last entry of row $N$ in the table $J^P$ is the position of the freed person in the Josephus problem corresponding to pattern $P$. It is also the position of the largest number in each row of $T^P$.

\subsection{Examples for patterns \texorpdfstring{$UD$}{UD} and \texorpdfstring{$DU$}{DU}}

Let us describe triangles $J$ and $J'$ in more detail.

\begin{proposition}
    Triangle $J$ is uniquely defined by the following recursions:
    \begin{align*}
J_{N,k}&=2k & \textrm{ if }  k&\leq\frac{N}{2} \\
J_{2N,k}&=2J_{N,k-N}-1 & \textrm{ if } k&>N \\
J_{2N+1,N+1}&=1 &         \\
J_{2N+1,k}&=2J_{N,k-N-1}+1 & \textrm{ if } k&>N+1.
\end{align*}
Similarly, triangle $J'$ is defined by the following recursions: 
\begin{align*}
J'_{N,k}&=2k-1 & \textrm{ if }  k&\leq \frac{N+1}{2}\\
J'_{2N,k}&=2J'_{N,k-N} & \textrm{ if }  k&>N \\
J'_{2N+1,N+1}&=2N+1  &        \\  
J'_{2N+1,k}&=2(J'_{N,k-N-1}\pmod N)+2 & \textrm{ if }  k&>N+1.\label{eq:josrec:trij':3}
\end{align*}
\end{proposition}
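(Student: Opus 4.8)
The plan is to obtain both families of recursions from the one-round recursion for the dealing triangle (Theorem~\ref{thm:trianglerecursion}) by inverting permutations, and then to use the prepending formulas for the Josephus triangle to eliminate the cross-references between $J$ and $J'$. Since row $N$ of $J^P$ is by definition the inverse of row $N$ of $T^P$, inverting the two identities $T^P_{N,d_k}=k$ and $T^P_{N,u_k}=T^{P'}_{|P_N|_U,k}+|P_N|_D$ yields the general formula
\[
J^P_{N,k}=\begin{cases} d_k, & k\le |P_N|_D;\\ u_{J^{P'}_{|P_N|_U,\,k-|P_N|_D}}, & k> |P_N|_D,\end{cases}
\]
where $P'$ denotes $P$ with its first $N$ letters deleted. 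This is the Josephus-triangle analogue of the example displayed after Theorem~\ref{thm:trianglerecursion}, and stating and justifying it is the first step.

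Second, I would specialize to $P=UD$ and $P=DU$. For $P=UD$ we have $d_k=2k$, $u_j=2j-1$, $|P_N|_D=\lfloor N/2\rfloor$, $|P_N|_U=\lceil N/2\rceil$, and the residual pattern is $P'=UD$ when $N$ is even and $P'=DU$ when $N$ is odd; symmetrically, for $P=DU$ we have $d_k=2k-1$, $u_j=2j$, $|P_N|_D=\lceil N/2\rceil$, $|P_N|_U=\lfloor N/2\rfloor$, with $P'=DU$ for $N$ even and $P'=UD$ for $N$ odd. Feeding these into the general formula gives directly: $J_{N,k}=2k$ for $k\le N/2$; $J_{2N,k}=2J_{N,k-N}-1$ for $k>N$; $J_{2N+1,k}=2J'_{N+1,k-N}-1$ for $k>N$; and dually $J'_{N,k}=2k-1$ for $k\le(N+1)/2$; $J'_{2N,k}=2J'_{N,k-N}$ for $k>N$; $J'_{2N+1,N+1}=2N+1$ (the top value of the first branch); $J'_{2N+1,k}=2J_{N,k-N-1}$ for $k>N+1$.

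Third, I would remove the remaining occurrences of $J'$ in the $J$-recursion and of $J$ in the $J'$-recursion by invoking the prepending theorem for the two factorizations $UD=U(DU)$ and $DU=D(UD)$. The $DP$-rule applied to $P=UD$ gives $J'_{N,1}=1$ and $J'_{N,k}=J_{N-1,k-1}+1$ for $k>1$; the $UP$-rule applied to $P=DU$ gives $J_{N,k}=1$ if $J'_{N,k}=N$ and $J_{N,k}=J'_{N,k}+1$ otherwise, i.e.\ $J_{N,k}=(J'_{N,k}\bmod N)+1$. Substituting the first identity into $J_{2N+1,k}=2J'_{N+1,k-N}-1$ splits it into $J_{2N+1,N+1}=2\cdot1-1=1$ (the case $k=N+1$) and $J_{2N+1,k}=2(J_{N,k-N-1}+1)-1=2J_{N,k-N-1}+1$ for $k>N+1$; substituting the second identity into $J'_{2N+1,k}=2J_{N,k-N-1}$ turns it into $J'_{2N+1,k}=2\big((J'_{N,k-N-1}\bmod N)+1\big)=2(J'_{N,k-N-1}\bmod N)+2$ for $k>N+1$. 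This reproduces exactly the listed recursions. For the uniqueness assertion, I would note that each recursion either assigns an entry of row $M\in\{2N,2N+1\}$ an explicit value or reduces it to row $\lfloor M/2\rfloor<M$, so repeated application terminates at row $1$, whose only entry $J_{1,1}=J'_{1,1}=1$ is the $N=0$ instance of the third recursion in each family; hence the recursions determine the entire triangles.

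I expect the only delicate point to be the floor/ceiling and parity bookkeeping: one must verify that the index thresholds produced by the general formula ($k\le\lfloor N/2\rfloor$, resp.\ $k\le\lceil N/2\rceil$) match the stated ones ($k\le N/2$, resp.\ $k\le(N+1)/2$), and that in the odd case the three regimes $k\le N$, $k=N+1$, $k>N+1$ glue together correctly after the substitutions of the third step. Beyond that, every step is a direct application of Theorem~\ref{thm:trianglerecursion} and the prepending theorem, so I anticipate no genuine obstacle.
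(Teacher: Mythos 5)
Your derivation is correct: the general one-round formula you obtain by inverting Theorem~\ref{thm:trianglerecursion} is exactly Theorem~\ref{thm:josephustrianglerecursion} (so there is no circularity, and you could equally well cite it, since its proof is independent of this proposition), the specializations $d_k=2k$, $u_j=2j-1$, $|P_N|_D=\lfloor N/2\rfloor$ for $UD$ and their $DU$ counterparts are right, and the decoupling via the prepending rules for $UD=U(DU)$ and $DU=D(UD)$ reproduces every branch, including the $\bmod\ N$ term and the special values at $k=N+1$. Your route is organized differently from the paper's: the paper argues directly that the first round deals all odd-numbered (resp.\ even-numbered) cards, leaving a half-size deck on which one recurses, and handles the odd-deck case by observing that the residual $UD$ dealing on cards $2,4,\dots,2N$ is a $DU$ dealing on the cyclic shift $4,6,\dots,2N,2$ --- moreover it only writes this out for $J'$, citing the OEIS for the $J$ recursions. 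You instead run everything through the two general lemmas (round recursion plus prepending), which makes the cyclic-shift step a black box, treats $J$ and $J'$ uniformly (so you actually prove the $J$ half rather than deferring to the OEIS), and adds the well-foundedness argument for uniqueness, which the paper leaves implicit. The underlying idea --- recurse after one full round, with the prepending lemma encapsulating the same shift argument the paper makes by hand --- is the same; your version is more mechanical but more complete, while the paper's is shorter and more concrete about which cards survive the first round.
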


\begin{proof}
The recursions for triangle $J$ are known and provided in comments to sequence A321298 in the OEIS database \cite{OEIS}. The proof is similar to the one for triangle $J'$ below.

The fact that $J'_{N,k}=2k-1$ whenever $k\leq \frac{N+1}{2}$ follows because, in the first round, we deal every odd-numbered card. For $k>\frac{N+1}{2}$, after the first round, the even-numbered cards remain. We consider cases based on the parity of the deck size.

If the total number of cards is even, then we perform $DU$ dealing on the remaining even-numbered cards, so we have $J'_{2N,k}=2J'_{N,k-N}$. If the total number of cards is odd, then we perform $UD$ dealing on the even-numbered cards, which is equivalent to performing $DU$ dealing on cards $4, 6, \dots, 2N, 2$. This gives $J'_{2N+1,k}=2(J'_{N,k-N-1}\mod N)+2$.
\end{proof}
%The same formulas for OEIS.
%\begin{verbatim}
%T(n,k) = 2*k-1 if k <= (n+1)/2;
%T(2*n,k) = 2*T(n,k-n) if k > n;
%T(2*n+1,k) = 2*T(n,k-n-1)+2 if k > n+1 and T(n,k-n-1) not = n;
%T(2*n+1,k) = 2 if T(n,k-n-1) = n
%\end{verbatim}

We can express triangles $J$ and $J'$ through the previous row.

\begin{proposition}
We have
    \begin{align*}
    J_{N,k} &= \begin{cases}
        (1 \pmod N) + 1, &\text{if $k = 1$};\\
        (J_{N-1, k-1} + 1 \pmod N) + 1, & \text{otherwise}.
    \end{cases}\\
    J'_{N, k} &=
    \begin{cases}
        1, &\text{if $k = 1$};\\
        (J'_{N-1, k-1}  \pmod {N-1}) + 2, &\text{otherwise}.
    \end{cases}\\
\end{align*}
\end{proposition}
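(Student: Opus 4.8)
The plan is to obtain both recursions as consequences of the prepending theorem of the previous subsection, exploiting the fact that the two length-$2$ patterns are cyclic shifts of one another: $UD = U(DU)$ and $DU = D(UD)$. Applying the single-letter prepending theorem once in each ``direction'' will move between the triangles $J$ and $J'$ and shift the deck size by one, and chaining the two applications produces the stated row-to-row recursion.

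Concretely, first I would apply the $UP$-case of the prepending theorem with $P = DU$, so that $UP = U\cdot DU = UD$. Since every entry of row $N$ of a Josephus triangle lies in $\{1,\dots,N\}$, the case split ``$=N \mapsto 1$, else $+1$'' is exactly the statement
\[ J_{N,k} = (J'_{N,k} \bmod N) + 1 \qquad (1 \le k \le N). \]
Next I would apply the $DP$-case with $P = UD$, so that $DP = D\cdot UD = DU$; this gives immediately
\[ J'_{N,1} = 1, \qquad J'_{N,k} = J_{N-1,k-1} + 1 \text{ for } k>1, \]
which already establishes the $k=1$ line of the $J'$ recursion.

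It then remains only to chain these two identities. For the $J$ recursion, substitute the second display into the first: for $k=1$ this yields $J_{N,1} = (J'_{N,1}\bmod N)+1 = (1\bmod N)+1$, and for $k>1$ it yields $J_{N,k} = (J_{N-1,k-1}+1 \bmod N)+1$, matching the claim. For the $J'$ recursion with $k>1$, apply the first display at deck size $N-1$ to rewrite $J_{N-1,k-1} = (J'_{N-1,k-1} \bmod (N-1))+1$, whence $J'_{N,k} = J_{N-1,k-1}+1 = (J'_{N-1,k-1}\bmod(N-1))+2$, as required. The only points needing care are the boundary cases: that row $N-1$ is defined (i.e.\ $N\ge 2$, which is automatic whenever $k>1$), and the degenerate modulus $1\bmod 1 = 0$, which is precisely what makes the $k=1$ formula give $J_{1,1}=1$. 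I do not anticipate a genuine obstacle here; essentially all of the content is in recognizing the cyclic-shift relation between $UD$ and $DU$ so that the single-step prepending theorem already encodes the round-free, one-row recursion.
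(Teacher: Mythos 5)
Your proposal is correct: the identifications $UD = U(DU)$ and $DU = D(UD)$ are legitimate at the level of infinite patterns, the rewriting of the $UP$-case of the prepending theorem as $J^{UP}_{N,k} = (J^{P}_{N,k} \bmod N) + 1$ is valid because every entry of row $N$ lies in $\{1,\dots,N\}$, and the chaining (including applying the first identity at deck size $N-1$, which only needs $k>1$) gives exactly the stated formulas. The route differs in presentation from the paper's: the paper proves the proposition directly, observing that after the first two moves one is performing $UD$ dealing on the relabeled deck $3,4,\dots,N,1$ (respectively $DU$ dealing on $3,4,\dots,N,2$), and reading off the recursion from that relabeling; you instead obtain it purely formally as a corollary of the already-proved single-letter prepending theorem, applied once in each direction. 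The underlying combinatorial content is the same (the prepending theorem is itself the one-move version of the paper's two-move argument), but your derivation has the advantage of requiring no new dealing argument and of making explicit that the row-to-row recursion is just the composition of the $D$-prepend and $U$-prepend relations between $J$ and $J'$, while the paper's direct argument is self-contained and shows the intermediate deck state explicitly.
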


\begin{proof}
    After two steps of $UD$ dealing, we are now performing $UD$ dealing on deck 3, 4, $\dots$, $N$, 1, which gives us the first equation.  After two steps of $DU$ dealing, we are now performing $DU$ dealing on deck 3, 4, $\dots$, $N$, 2, which gives us the second equation.
\end{proof}

Here is an example of how triangles $J$ and $J'$ are connected.

\begin{proposition}
    We can express triangles $J$ and $J'$ through each other.
\begin{align*}
J_{2N,N+k} &= 2J_{N,k} - 1 \\
J_{2N+1,N+k} &= 2J'_{N+1,k} - 1 \\
J'_{2N,N+k} &= 2J'_{N,k} \\
J'_{2N+1,N+1+k} &= 2J_{N,k}.
\end{align*}
\end{proposition}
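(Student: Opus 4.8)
The plan is to prove all four identities by the same device used for Theorem~\ref{thm:trianglerecursion}: perform one complete round of dealing on the larger of the two decks, then recognize the remaining deck as a relabeling of a smaller standard deck being dealt by a known pattern. Observe at the outset that the identities $J_{2N,N+k}=2J_{N,k}-1$ and $J'_{2N,N+k}=2J'_{N,k}$ are merely reindexings ($k\mapsto k-N$) of the recursions $J_{2N,k}=2J_{N,k-N}-1$ and $J'_{2N,k}=2J'_{N,k-N}$ established in the previous proposition, so only the two ``cross'' identities really need a fresh argument; but it is cleanest to treat all four at once.

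First I would record what one dealing round does in each case. For $UD$-dealing on a deck of size $2N$, the first round deals the cards originally in positions $2,4,\dots,2N$ (giving them labels $1,\dots,N$), and tracing the ``under'' moves shows the $N$ survivors end up in the order $1,3,5,\dots,2N-1$; that is, position $j$ of the residual deck holds card $2j-1$. For $UD$-dealing on $2N+1$ cards the round again deals positions $2,4,\dots,2N$ and leaves $N+1$ survivors in the order $1,3,\dots,2N+1$, so again position $j$ holds card $2j-1$. Symmetrically, for $DU$-dealing the round deals the odd-indexed original positions, and the residual deck is $2,4,\dots,2N$ in order (position $j$ holds card $2j$); the one asymmetry is that $DU$ on $2N+1$ cards deals $N+1$ cards in the round rather than $N$, which is exactly what produces the index shift in the fourth identity. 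The second ingredient is the residual pattern: deleting the first $m$ letters of an alternating pattern leaves the same pattern if $m$ is even and the reversed one if $m$ is odd, so $UD$ on $2N$ continues as $UD$, $UD$ on $2N+1$ continues as $DU$, $DU$ on $2N$ continues as $DU$, and $DU$ on $2N+1$ continues as $UD$.

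With these two facts, the conclusion is immediate: the card dealt at step $N+k$ (respectively $N+1+k$ in the $DU$-on-$2N+1$ case) is the $k$-th card dealt in the second round, which is a round of $P'$-dealing on an $m$-card deck, hence occupies residual position $J^{P'}_{m,k}$, and therefore carries the label obtained by substituting that position into the appropriate ``position $j\mapsto$ card'' rule ($2j-1$ or $2j$). Reading this off in the four cases yields $2J_{N,k}-1$, $2J'_{N+1,k}-1$, $2J'_{N,k}$, and $2J_{N,k}$ as claimed. The main obstacle is purely the index bookkeeping: one must be careful about how many cards the first round deals (so as to know where in the row the second round begins) and about the pattern flip for odd deck sizes, since a miscount there produces a superficially plausible but incorrect index. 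Verifying the identities against the small rows of Tables~\ref{tab:JUD} and~\ref{tab:JDU} is a useful check on the off-by-one details.
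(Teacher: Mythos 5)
Your proposal is correct and follows essentially the same route as the paper: perform one full dealing round, note that the survivors form the odd (resp.\ even) cards in increasing order so that residual position $j$ holds card $2j-1$ (resp.\ $2j$), track whether the continuation pattern is $UD$ or $DU$ according to the parity of the deck size, and account for how many cards the first round dealt. Your added remark that the two ``same-pattern'' identities are just reindexings of the previously stated recursions is a nice observation but not a departure from the paper's argument.
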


\begin{proof}
Suppose we want to find $J_{2N,N+k}$. After one round of $UD$ dealing, we are left with a deck containing $N$ cards of increasing odd numbers from 1 to $2N-1$, which we are performing $UD$ dealing on. This gives the first equation.

Now, suppose we want to find $J_{2N+1, N+k}$. After one round of $UD$ dealing, we are again left with $N+1$ cards of increasing odd numbers. But now, the next move is a $D$, so we are performing $DU$ dealing on these cards. This gives the second equation. The remaining two equations are due to exactly the same logic, but starting with $DU$ dealing.
\end{proof}

\begin{example}
The theorem above allows us to construct triangle $J^{DU}$ in Table~\ref{tab:JDU} from triangle $J^{UD}$ from Table~\ref{tab:JUD} above.
\end{example}

\subsection{Recursing by one round of dealing}

As we did with the dealing triangle in Theorem~\ref{thm:trianglerecursion}, we can also consider doing an entire round of dealing before recursing:

\begin{theorem}
\label{thm:josephustrianglerecursion}
We have
\[J_{N, k}^{P} = \begin{cases}
  d_k, & \text{ if } d_k \leq N;\\
  u_{\left(J_{|P_N|_U, k - |P_N|_D}^{P'}\right)}, & \text{ otherwise,}
\end{cases}
\]
where $P'$ is the pattern $P$ with the first $N$ letters removed.
\end{theorem}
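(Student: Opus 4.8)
The plan is to argue directly in terms of the Josephus game, tracking what happens during the first full round of dealing, in the same spirit as the proof of Theorem~\ref{thm:trianglerecursion}. Recall that, by definition, $J^P_{N,k}$ is the position in a circle of $N$ people of the person executed on the $k$th turn (equivalently, the position of the entry equal to $k$ in row $N$ of $T^P$). Write $a = |P_N|_D$ and $b = |P_N|_U = N - a$ for the numbers of executions and skips in the first round.

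First I would observe that the first $N$ moves of any $P$-dealing process the cards originally in positions $1, 2, \dots, N$, in that order, each exactly once: a $D$ move at position $i$ removes that card, a $U$ move sends it to the bottom, and since there are exactly $N$ cards, none returns to the top before the round is over (a one-line induction on the step index). In particular, the step at index $i$ is the $|P_i|_D$th execution precisely when $P_i = D$, so the $k$th execution happens at step $d_k$ whenever $d_k \le N$, and the person executed then is the one originally in position $d_k$. This gives the first case $J^P_{N,k} = d_k$.

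For the second case ($d_k > N$), I would show that after the first round the $b$ survivors are exactly the people originally in positions $u_1 < u_2 < \cdots < u_b$, and --- this is the step needing the most care --- that the resulting deck lists them from top to bottom in that same order $u_1, u_2, \dots, u_b$. This holds because each $U$ move appends the current top card to the bottom, and within the first round the cards at positions $u_1,\dots,u_b$ are appended in order of their indices and never touched again (and all $a$ removed cards are gone), so the deck afterward is precisely the queue $u_1,\dots,u_b$. Hence continuing the deal is the same as running $P'$-dealing on a deck of $b$ cards, where new position $j$ corresponds to old position $u_j$ and $P'$ is $P$ with its first $N$ letters deleted. Since $a$ executions have already occurred, the $k$th overall execution is the $(k-a)$th execution of this sub-process, which by definition of $J^{P'}$ sits at new position $j = J^{P'}_{b,\,k-a}$, i.e.\ at old position $u_j$. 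Therefore $J^P_{N,k} = u_{J^{P'}_{|P_N|_U,\; k-|P_N|_D}}$, as claimed.

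Finally, I would remark that the identity can also be obtained with no fresh combinatorics, simply by taking inverse permutations row-by-row in Theorem~\ref{thm:trianglerecursion}: the two relations $T^P_{N,d_k} = k$ and $T^P_{N,u_k} = T^{P'}_{|P_N|_U,k} + |P_N|_D$ translate, upon unwinding $T = J^{-1}$ on each row, exactly into the two cases above. I expect the only real obstacle to be the index bookkeeping --- namely that $|P_i|_D$ counts position $i$ itself (so the $k$th $D$ sits at index $d_k$ with $|P_{d_k}|_D = k$), and the order-preserving relabeling of $\{u_1,\dots,u_b\}$ onto $\{1,\dots,b\}$ --- while everything else follows immediately from the one-round analysis already used for the dealing triangle.
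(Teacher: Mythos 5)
Your argument is correct and is essentially the paper's own proof: analyze the first dealing round (so the $k$th deal is at index $d_k$ when $d_k \le N$), then observe that the survivors form the deck $u_1,\dots,u_{|P_N|_U}$ in that order and recurse with $P'$ on $|P_N|_U$ cards, just spelled out in more detail than the paper's terse version. Your closing remark that the identity also follows by inverting the two relations of Theorem~\ref{thm:trianglerecursion} row-by-row is a valid alternative derivation, but it does not change the substance.
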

\begin{proof}
    In the first round of dealing, our $k$th deal is the index of the $k$th $D$ in our pattern, since we have yet to loop around.
    Then, after the first round of dealing, we have already dealt $|P_N|_D$ cards, and are left to perform $P'$ dealing on a deck of size $|P_N|_U$, whose cards are labeled with all of the indices of $U$s in the first $N$ elements of $P$.
    This yields the stated recursion.
\end{proof}

\subsection{The infinity row}

We see that the columns of the Josephus triangle stabilize at a particular number. Thus, we introduce the notion of the limiting row $J^P_{\infty,k}$, which we call \textit{the infinity row}. The value of $J^P_{\infty,k}$ is the stabilization number for column $k$.

\begin{proposition}\label{prop:stabilization}
For $N \geq M^P(k)$, we have
\[J^P_{N,k} = J^P_{\infty,k} = M^P(k) = d_k(P).\]
\end{proposition}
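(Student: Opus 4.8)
The plan is to show that once the deck is large enough, the position in which the $k$th card is dealt no longer depends on $N$, and that this stable position equals $d_k(P)$. The key intuition is that if $N \geq M^P(k) = d_k$, then the first $k$ deals all occur during the initial pass through the deck before any card has been cycled back around --- that is, within the first $d_k$ moves, every ``under'' move takes a card from among positions $1, \dots, N$ that has not yet been touched, so the identity of the card in each position is unchanged from the start. Consequently the $i$th $D$ in the pattern, which occurs at index $d_i$, deals the card that began in position $d_i$ --- wait, more precisely, we want the inverse statement: $J^P_{N,k}$ is the \emph{position} of the card dealt $k$th, and by Theorem~\ref{thm:josephustrianglerecursion} this is exactly $d_k$ when $d_k \leq N$.

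Concretely, I would invoke Theorem~\ref{thm:josephustrianglerecursion} directly: it states $J^P_{N,k} = d_k$ whenever $d_k \leq N$. Since $M^P(k) = d_k$ by the first Proposition of Section~\ref{sec:nummoves}, the hypothesis $N \geq M^P(k)$ is precisely the hypothesis $d_k \leq N$ (noting $d_1 < d_2 < \cdots$, so $d_k \leq N$ automatically gives $d_j \leq N$ for all $j \leq k$, but we only need the single inequality for column $k$). This immediately gives $J^P_{N,k} = d_k(P) = M^P(k)$ for all such $N$. It then remains only to observe that this common value is independent of $N$, so the column stabilizes, and the stabilization value $J^P_{\infty,k}$ is by definition this common value; hence $J^P_{\infty,k} = d_k(P) = M^P(k)$ as well.

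I would also include a brief self-contained argument for why Theorem~\ref{thm:josephustrianglerecursion} gives what we want in the first branch, in case a reader wants the direct combinatorial picture: number the cards $1, \dots, N$ by position. During the first $d_k$ moves, exactly $k-1$ down-moves and some number of under-moves have occurred strictly before move $d_k$; each under-move removes the current top card and places it on the bottom, but since fewer than $N$ moves have elapsed we have never re-dealt from a cycled card, so the sequence of cards reaching the top is simply $1, 2, 3, \dots$ in order of original position. The $i$th down-move therefore deals the card originally in position $d_i$; in particular the $k$th dealt card sat in position $d_k$, i.e.\ $J^P_{N,k} = d_k$.

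The main (and essentially only) obstacle is bookkeeping: making sure the inequality bookkeeping is airtight --- that $N \geq M^P(k)$ really does place move $d_k$ within the first $N$ moves, so no wraparound has occurred --- and confirming that ``stabilization'' is well-defined, i.e.\ that the value is genuinely constant for all $N \geq M^P(k)$ rather than merely eventually constant. Both follow immediately from the strict monotonicity of $d_k$ in $k$ and from Theorem~\ref{thm:josephustrianglerecursion}, so there is no real difficulty; the proof is a short corollary-style argument.
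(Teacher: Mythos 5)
Your proposal is correct and matches the paper's argument: the paper likewise observes that when $N \geq M^P(k)$ (equivalently $k \leq |P_N|_D$, i.e.\ $d_k \leq N$) the $k$th deal occurs in the first round before any wraparound, so the dealt card's original position equals the move count $d_k$, which is exactly the first branch of Theorem~\ref{thm:josephustrianglerecursion} that you invoke. Your added bookkeeping about monotonicity of $d_k$ and the well-definedness of $J^P_{\infty,k}$ is fine but not a different route.
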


\begin{proof}
When $k \leq |P_N|_D$, we know that the $k$th deal happens in the first round of dealing. Since we have not yet looped around, the index of the card dealt is the same as the number of moves performed. This means that the $k$th column will stabilize at $M^P(j) = d_k$.
\end{proof}

Column $k$ stabilizes at row $d_k(P)$. We can also observe that the value $J^P_{d_k(P) - 1,k}$ in the column right before stabilization is the same throughout the whole triangle.

\begin{proposition}
For any $P$, and any $k$ with $d_k{P} \geq 2$, we have
    \[J^P_{d_k(P) - 1,k} = u_1(P).\]
\end{proposition}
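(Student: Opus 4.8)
The plan is to understand the state of the deck right before the $d_k(P)$-th move, which is the move that deals card number $k$. By Proposition~\ref{prop:stabilization}, column $k$ stabilizes at row $d_k(P)$, so for $N \geq d_k(P)$ the value $J^P_{N,k}$ is $d_k(P)$; here we want the value at row $N = d_k(P) - 1$, i.e.\ one row before stabilization. At row $N = d_k(P)-1$, the deck has $N$ cards, and the prefix $P_N = P_{d_k(P)-1}$ contains exactly $k-1$ letters $D$ (since the $k$th $D$ occurs at position $d_k$), hence $|P_N|_D = k-1$ and the $k$th deal does \emph{not} happen in the first round. So we are in the ``otherwise'' branch of Theorem~\ref{thm:josephustrianglerecursion}: we have $J^P_{N,k} = u_{\left(J^{P'}_{|P_N|_U,\, k - |P_N|_D}\right)} = u_{\left(J^{P'}_{|P_N|_U,\, 1}\right)}$, where $P'$ is $P$ with its first $N = d_k(P)-1$ letters removed.

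Next I would analyze this expression. Since $|P_N|_D = k-1$ and $|P_N| = N = d_k(P) - 1$, we get $|P_N|_U = d_k(P) - 1 - (k-1) = d_k(P) - k$. Now $P'$ is the pattern starting at position $d_k(P)$ of $P$, so its first letter is $P$'s $d_k(P)$-th letter, which is $D$ (the $k$th $D$). The key elementary fact is that for any pattern $Q$ whose first letter is $D$, we have $J^Q_{m,1} = 1$ for every $m \geq 1$: the very first move deals the top card, which is card number $1$, so person $1$ is executed first. Applying this with $Q = P'$ and $m = |P_N|_U = d_k(P) - k \geq 1$ (this is $\geq 1$ precisely because $d_k(P) \geq 2$ forces there to be at least one $U$ or... actually one must check $d_k(P) - k \geq 1$; since there are $k-1$ letters $D$ and the remaining $d_k(P) - k$ letters among the first $d_k(P)-1$ are all $U$, and $d_k(P) \geq 2$ with the $d_k(P)$-th letter being $D$, the first move of the whole pattern $P$ need not be $D$... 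I should argue $d_1(P) = u_1(P) + \text{something}$ or directly that there is at least one $U$ before position $d_k$). Granting $m \geq 1$, we conclude $J^{P'}_{|P_N|_U, 1} = 1$, hence $J^P_{d_k(P)-1, k} = u_1(P')$.

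Finally I would relate $u_1(P')$ back to $u_1(P)$. Since $P'$ begins with the $k$th $D$ of $P$ at position $d_k(P)$, and all letters of $P$ strictly before position $d_k(P)$ that are not $D$ are $U$, the first $U$ appearing anywhere in $P$ occurs at some position $< d_k(P)$ as long as $P$ contains at least one $U$ before its first... hmm, this needs care. The cleanest route: the first move of $P$ is either $U$ or $D$. If it is $U$, then $u_1(P) = 1$; and the claim $J^P_{d_k(P)-1,k} = 1$ would need $u_1(P') = 1$, i.e.\ $P'$ also starts with $U$ — but $P'$ starts with $D$, contradiction, so this case forces us instead to verify directly that after removing the first $d_k(P)-1$ letters the ``first $U$'' bookkeeping still yields $u_1(P)$. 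I suspect the intended statement uses a slightly different recursion, so the main obstacle will be pinning down exactly which round the $k$th deal lands in and tracking the relabeling of the surviving cards: the surviving deck of size $|P_N|_U$ is labeled by the positions of the $U$'s among $P_1, \dots, P_N$, so when person $1$ of that sub-problem is executed, its \emph{original} label is $u_1(P)$ — the index of the first $U$ in $P$. That is the resolution: in Theorem~\ref{thm:josephustrianglerecursion} the surviving cards carry their original $U$-indices, so $J^P_{d_k(P)-1,k} = u_{(1)}$ with the outer $u$ taken with respect to the \emph{original} pattern $P$, giving $u_1(P)$ directly, with no need to reason about $P'$'s first $U$ at all. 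I would therefore restructure the argument around carefully reading off the label of the first-executed survivor, and the one genuine thing to check is that $k-1 < k \leq$ (number of $D$'s so far)$+1$ is consistent with $d_k(P)-1$ being a row where the $k$th deal has not yet occurred, which is immediate from the definition of $d_k$.
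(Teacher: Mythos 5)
Your argument, as you finally assemble it in the last paragraph, is correct and lands on essentially the same observation as the paper, but it gets there by a different route: you instantiate Theorem~\ref{thm:josephustrianglerecursion} at $N = d_k(P)-1$, note $|P_N|_D = k-1$ so the entry falls in the ``otherwise'' branch with inner index $k-|P_N|_D = 1$, use that $P'$ begins with the $k$th $D$ so $J^{P'}_{m,1}=1$, and then read off that the outer $u$-subscripts in that theorem refer to the $U$-positions of the \emph{original} pattern $P$, giving $u_1(P)$. The paper instead argues directly, without invoking the recursion: at this deck size the $k$th deal is the first deal after one complete round, and the top of the surviving deck is the first card that was placed under, i.e.\ the card at index $u_1(P)$. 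The content is the same (your reading of the theorem is exactly the ``survivors are labeled by the $U$-indices of $P$'' fact from its proof), but your version is a mechanical corollary of an already-proved statement, while the paper's is a self-contained one-liner; your detour through the incorrect interpretation $u_1(P')$ and the ensuing ``contradiction'' should simply be deleted, since it rests on misreading the outer subscript as relative to $P'$. The one loose end you flag but do not close, $|P_{d_k(P)-1}|_U = d_k(P)-k \geq 1$, is genuinely needed (both for your appeal to $J^{P'}_{m,1}$ with $m\geq 1$ and for $u_1(P)$ to lie among the first $N$ letters), and it is settled in one line: the entry $J^P_{d_k(P)-1,k}$ only exists when $k \leq d_k(P)-1$, which forces at least one $U$ among the first $d_k(P)-1$ letters; the hypothesis $d_k(P)\geq 2$ alone does not give this (e.g.\ $k=2$ for a pattern beginning $DD$), so state the well-definedness condition explicitly rather than leaning on $d_k(P)\geq 2$.
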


\begin{proof}
    Suppose $N= d_k(P) -1$. The value $J^P_{N,k}$ corresponds to the first card remaining in the deck after one full dealing round. This is the first card skipped over in the first dealing round, which corresponds to the first $U$ in the pattern. Thus, the value is $u_1(P)$.
\end{proof}

\begin{example}
    If the pattern $P$ starts with $U$, then $u_1 = 1$. We see that this is the value before stabilization in triangle $J^{UD}$. Similarly, we can observe that the value before stabilization in triangle $J^{DU}$ is 2, which equals $u_1(DU)$.
\end{example}

\section{Freed person in the Josephus problem}
\label{sec:freed}

\subsection{Definitions}

In Josephus's problem, we are interested in the person who is freed. This person corresponds to the last card dealt. That means the position of the freed person in the circle equals the position of the largest card in the deck.

We denote the position of the freed person as $F^{P}(N)$. As mentioned, $F^{P}(N)$ is the index of the largest entry of row $N$ in the dealing triangle $T^P$. We also notice that the freed person is the last entry of the $N$th row of the Josephus triangle: $F^{P}(N) = J^P_{N,N}$.

\begin{example}
    If our dealing is $UD$, the position of the freed person is known, and the corresponding sequence $F^{UD}(N)$ is sequence A006257 in the OEIS \cite{OEIS}:
\[1,\ 1,\ 3,\ 1,\ 3,\ 5,\ 7,\ 1,\ 3,\ 5,\ 7,\ 9,\ 11,\ 13,\ 15,\ 1,\ \ldots.\]
 It consists of $n$ blocks, where the $m$th block is a sequence of odd numbers from 1 to $2^m-1$ inclusive. To calculate the $n$th element of this sequence, write $n$ in binary, then rotate left 1 place (this is equivalent to moving the first digit of the number to the last place). For example, 5 in binary is 101, then rotating left, we get 011. Thus, the 5th element of this sequence is 3. Thus, 
\[F^{UD}(N) = 2(N - 2^{\lfloor \log_2 N \rfloor}) + 1.\]
\end{example}

\subsection{Prepending a pattern \texorpdfstring{$P$}{P} with \texorpdfstring{$D$}{D} or \texorpdfstring{$U$}{U}}

Proposition~\ref{prop:triangleprepend} allows us to connect the last person freed for patterns $DP$ and $UD$ to $P$.

\begin{corollary}
The freed person for patterns $DP$ and $UP$ can be calculated as:
\[F^{DP}(1) = 1 \quad \textrm{ and } \quad F^{DP}(N) = F^P(N-1) + 1, \quad \textrm{ for \quad} N > 1,\]
and
\[F^{UP}(N) \equiv 1 + F^{P}(N) \mod{N}.\]
\end{corollary}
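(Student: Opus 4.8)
The plan is to derive both formulas directly from Proposition~\ref{prop:triangleprepend}, using only the fact that $F^P(N)$ is the column index of the largest entry ($= N$) in row $N$ of the dealing triangle $T^P$. Recall $F^P(N) = J^P_{N,N}$, but the cleaner route here is through $T^P$: by definition $F^P(N) = k$ precisely when $T^P_{N,k} = N$, since $N$ is the largest value appearing in row $N$.

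For the $DP$ case, first handle $N=1$ trivially: the only row is $T^{DP}_{1,1} = 1$, so $F^{DP}(1) = 1$. For $N > 1$, I would invoke the first half of Proposition~\ref{prop:triangleprepend}: $T^{DP}_{N,1} = 1 \neq N$ (as $N>1$), and for $k>1$ we have $T^{DP}_{N,k} = T^P_{N-1,k-1} + 1$. Thus $T^{DP}_{N,k} = N$ holds for some $k>1$ iff $T^P_{N-1,k-1} = N-1$, i.e.\ iff $k-1 = F^P(N-1)$. Therefore $F^{DP}(N) = F^P(N-1) + 1$. A minor point worth a sentence: one must note that removing the first column and shifting preserves the property that the maximum of the shifted row is attained at a unique position, but this is immediate since $T^P$ and $T^{DP}$ are permutation triangles, so each row has a unique maximum.

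For the $UP$ case, I would use the second half of Proposition~\ref{prop:triangleprepend}: the row $N$ of $T^{UP}$ is the cyclic left-shift of row $N$ of $T^P$, namely $T^{UP}_{N,1} = T^P_{N,N}$ and $T^{UP}_{N,k} = T^P_{N,k-1}$ for $k>1$. Cyclically shifting the positions of a row does not change which value is the maximum, only where it sits: if $T^P_{N,j} = N$ then after the left-shift this value moves to position $j-1$ when $j>1$, and to position $N$ when $j=1$. In other words $F^{UP}(N) = F^P(N) - 1$ if $F^P(N) > 1$ and $F^{UP}(N) = N$ if $F^P(N) = 1$, which is exactly the statement $F^{UP}(N) \equiv F^P(N) + 1 \pmod N$ once one checks the edge case: here I interpret the congruence class representative as lying in $\{1,\dots,N\}$ rather than $\{0,\dots,N-1\}$, so that $F^P(N) = N$ would give $F^{UP}(N) = 1$ and $F^P(N) < N$ gives $F^{UP}(N) = F^P(N)+1$; reconciling this with the left-shift description is the one spot where care is needed.

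I do not expect any real obstacle here — the whole argument is a two-line bookkeeping consequence of the already-established triangle recursions. The only thing to be careful about is the mod-$N$ bookkeeping in the $UP$ formula (choosing the right representative and matching it against the plain shift description), and stating explicitly that "largest entry of the row" is well-defined because each row is a permutation. Everything else is direct substitution into Proposition~\ref{prop:triangleprepend}.
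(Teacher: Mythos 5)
Your $DP$ half is correct and is essentially the paper's own (implicit) derivation: the corollary is presented as a direct read-off of Proposition~\ref{prop:triangleprepend}, with $F^P(N)$ identified as the position of the entry $N$ in row $N$ of $T^P$, and your bookkeeping for that case is exactly right. The problem is in your $UP$ half, where the argument contradicts itself. From the formulas you correctly quote, $T^{UP}_{N,1}=T^P_{N,N}$ and $T^{UP}_{N,k}=T^P_{N,k-1}$ for $k>1$, the value sitting at position $j$ in row $N$ of $T^P$ reappears at position $j+1$ in row $N$ of $T^{UP}$ (and at position $1$ when $j=N$): the row is shifted cyclically to the \emph{right}, not to the left. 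So if $j=F^P(N)$ is the position of the entry $N$, then $F^{UP}(N)=F^P(N)+1$ when $F^P(N)<N$ and $F^{UP}(N)=1$ when $F^P(N)=N$, which is precisely the stated congruence with representative in $\{1,\dots,N\}$.

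Instead, you assert that the maximum ``moves to position $j-1$ when $j>1$, and to position $N$ when $j=1$'', i.e.\ $F^{UP}(N)=F^P(N)-1$; this is incompatible both with the congruence $F^{UP}(N)\equiv F^P(N)+1 \pmod N$ you are trying to prove (for $N>2$) and with the formulas you quoted one sentence earlier. You flag the tension (``reconciling this with the left-shift description is the one spot where care is needed'') but never resolve it, so as written the proof of the $UP$ formula does not go through; the fix is the one-line correct application of the shift direction given above. A quick sanity check: with $P=DU$ we have $UP=UD$, and $F^{DU}(5)=2$ while $F^{UD}(5)=3=2+1$, consistent with the right-shift reading and not with $2-1=1$.
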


\begin{example}
    The index of the freed person for pattern $DU$ is sequence A152423:
\[1,\ 2,\ 2,\ 4,\ 2,\ 4,\ 6,\ 8,\ 2,\ 4,\ 6,\ 8,\ 10,\ 12,\ 14,\ 16,\ 2,\ 4,\ 6,\ 8,\ 10,\ \ldots.\]
We can see that this sequence is A006257 shifted by 1 with an added 1 in front.
\end{example}

\subsection{Recursing by one round of dealing}

The theorem below follows from Theorem~\ref{thm:josephustrianglerecursion} describing the Josephus triangle.

\begin{theorem}
\label{thm:freedpersonrecursion}
We have
\[F^{P}(N) = \begin{cases}
  N, & \text{ if } d_N = N;\\
  u_{F^{P'}(|P_N|_U)}, & \text{ otherwise,}
\end{cases}
\]
where $P'$ is the pattern $P$ with the first $N$ letters removed.
\end{theorem}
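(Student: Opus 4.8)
The plan is to derive Theorem~\ref{thm:freedpersonrecursion} directly from Theorem~\ref{thm:josephustrianglerecursion} by specializing the index $k$ to $k = N$, using the observation already made in Section~\ref{sec:freed} that $F^P(N) = J^P_{N,N}$. So the proof is essentially a one-line deduction once the bookkeeping on the two cases is checked.

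First I would recall that $J^P_{N,N} = F^P(N)$ by definition. Then I would invoke Theorem~\ref{thm:josephustrianglerecursion} with $k = N$. The first branch of that theorem reads: if $d_k \le N$ then $J^P_{N,k} = d_k$. For $k = N$, the $N$th letter $D$ has index $d_N$, and $d_N \le N$ happens exactly when every one of the first $d_N$ letters — hence exactly the first $N$ letters, since $d_N$ cannot be less than $N$ (there are $N$ occurrences of $D$ among the first $d_N$ letters, so $d_N \ge N$) — is a $D$; equivalently $d_N = N$. In that case the theorem gives $J^P_{N,N} = d_N = N$, which is the first case of the statement to be proved. Here I would spell out the small point that $d_N \le N$ is equivalent to $d_N = N$, since the inequality $d_N \ge N$ always holds.

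In the remaining case, $d_N > N$, the second branch of Theorem~\ref{thm:josephustrianglerecursion} applies, giving
\[
J^P_{N,N} = u_{\left(J^{P'}_{|P_N|_U,\, N - |P_N|_D}\right)},
\]
where $P'$ is $P$ with its first $N$ letters removed. Now I use the identity $|P_N|_D + |P_N|_U = N$ from the Preliminaries, so $N - |P_N|_D = |P_N|_U$, and the inner index becomes $J^{P'}_{|P_N|_U,\, |P_N|_U}$, which is exactly $F^{P'}(|P_N|_U)$ by the definition of $F$. Substituting yields $F^P(N) = u_{F^{P'}(|P_N|_U)}$, matching the second case. I would also note that $|P_N|_U > 0$ here (so that $F^{P'}$ is applied to a positive argument), since $d_N > N$ forces at least one $U$ among the first $N$ letters.

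I do not expect any real obstacle: the only mildly delicate point is the equivalence of ``$d_N \le N$'' with ``$d_N = N$'' and the matching of the index $N - |P_N|_D$ with $|P_N|_U$, both of which are immediate from the counting identities stated earlier. The proof is therefore a short specialization argument rather than anything requiring new ideas.
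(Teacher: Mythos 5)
Your proposal is correct and follows exactly the paper's route: specialize Theorem~\ref{thm:josephustrianglerecursion} to $k=N$ and use $F^P(N)=J^P_{N,N}$, with $N-|P_N|_D=|P_N|_U$ identifying the inner index. The extra checks you spell out (that $d_N\le N$ forces $d_N=N$, and that $|P_N|_U>0$ in the second case) are sound refinements of the paper's one-line deduction.
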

\begin{proof}
    This follows by plugging $k = N$ into Theorem~\ref{thm:josephustrianglerecursion}, noting that by definition $F^P(N) = J_{N,N}$.
\end{proof}

Note that the condition $d_N = N$ means that the first $N$ moves of $P$ are all ``down''.

\begin{example}
    Suppose we have 7 cards and perform $UD$ dealing. 
    After one round of dealing, we have removed $|P_7|_D = 3$ cards, and are left to perform $DU$ dealing on the remaining $|P_7|_U = 4$ cards. 
    By Theorem~\ref{thm:freedpersonrecursion}, we have $F^{UD}(7) =u_{F^{DU}(|P_7|_U)} = u_{F^{DU}(4)} = u_4 = 7$.
\end{example}

\section{The elimination order of the first person}
\label{sec:first}

\subsection{Definitions}

Consider the first card in the deck, and suppose its value is $m$ --- that is, suppose it is the $m$th card to be dealt. Correspondingly, the first person in the Josephus problem is executed on step $m$. Thus, the value of the first card in the deck equals the elimination order of the first person in the Josephus problem. We denote the elimination order of the first person as $E^P(N)$.

It follows that the first column in table $T^P$ is the sequence $E^P(N)$. We can also describe $E^P(N)$ as the index of the entry $1$ in $J^P(N)$.

\begin{example}
    The first column in Table~\ref{tab:UDdealing}, which represents $T^{UD}$, is sequence $E^{UD}(N)$, which is sequence A225381 in the OEIS \cite{OEIS}:
%Elimination order of the first person in a Josephus problem.
\[1,\ 2,\ 2,\ 4,\ 3,\ 5,\ 4,\ 8,\ 5,\ 8,\ 6,\ 11,\ 7,\ 11,\ 8,\ 16,\ 9,\ 14,\ 10,\ 18,\ 11,\ 17,\ 12,\ 23,\ \ldots.\]
\end{example}

\subsection{Prepending a pattern \texorpdfstring{$P$}{P} with \texorpdfstring{$D$}{D} or \texorpdfstring{$U$}{U}}

The order of elimination of the first person is easy to calculate for patterns $DP$ and $UP$.

\begin{proposition}
    We have
    \[E^{DP} = 1 \quad \textrm{ and } \quad E^{UP}(N) = T^P_{N,N}.\]
\end{proposition}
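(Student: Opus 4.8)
The plan is to prove each of the two claimed identities directly from the definitions of $E^P$ and the dealing triangle $T^P$, invoking Proposition~\ref{prop:triangleprepend} where convenient.

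For the first identity, recall that $E^{DP}(N)$ is by definition the value of the first card in the deck of $N$ cards prepared so that $DP$-dealing produces the cards in increasing order; equivalently, $E^{DP}(N) = T^{DP}_{N,1}$. But the pattern $DP$ begins with a $D$, so the very first move of $DP$-dealing is to deal the top card. Since the cards must come out in order $1, 2, \dots, N$, the top card must be the one dealt first, namely card $1$. Hence $T^{DP}_{N,1} = 1$ for every $N$, which is exactly the $k=1$ case of the first formula in Proposition~\ref{prop:triangleprepend}. This gives $E^{DP}(N) = 1$ for all $N$, i.e.\ $E^{DP} = 1$ as a constant sequence.

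For the second identity, we again use $E^{UP}(N) = T^{UP}_{N,1}$. The pattern $UP$ begins with a $U$, so the first move cycles the top card to the bottom. By the $k=1$ case of the second formula in Proposition~\ref{prop:triangleprepend}, we have $T^{UP}_{N,1} = T^{P}_{N,N}$. Intuitively: after the initial under-move, the remaining dealing is governed by $P$ acting on the cyclically shifted deck $\{2, 3, \dots, N, 1\}$, so the card that started on top (now at the bottom, i.e.\ in position $N$ of the shifted deck) is dealt at the time dictated by $T^P_{N,N}$. Therefore $E^{UP}(N) = T^P_{N,N}$.

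This argument is short and essentially bookkeeping; the only point requiring care is being explicit that $E^P(N)$ coincides with the first-column entry $T^P_{N,1}$ (stated earlier in this section), so that Proposition~\ref{prop:triangleprepend} applies verbatim. I do not anticipate a genuine obstacle — the content is entirely contained in the already-established prepending rule for the dealing triangle.
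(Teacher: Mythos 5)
Your proposal is correct and follows essentially the same route as the paper: the paper likewise observes that a pattern starting with $D$ eliminates the first person immediately, and invokes Proposition~\ref{prop:triangleprepend} to get $E^{UP}(N) = T^{UP}_{N,1} = T^P_{N,N}$. Your extra bookkeeping making explicit that $E^P(N) = T^P_{N,1}$ is already stated in the paper just before the proposition, so nothing is missing.
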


\begin{proof}
If a pattern starts with $D$, then the first person is immediately eliminated. Suppose our pattern is $UP$. From Proposition~\ref{prop:triangleprepend}, the elimination order is $T^{UP}_{N,1} = T^P_{N,N}$. 
\end{proof}

So, $E^{UP}(N)$  is the main diagonal in the triangle $T^P$.

\begin{example}
    We can check that $E^{UD}(N)$  is the main diagonal in the triangle $T' = T^{DU}$.
\end{example}

\subsection{Recursing by one round of dealing}

As $E^P$ is the first column of $T^P$, we can derive recursive formulas for the elimination order of the first person as special cases of our formulas for the dealing triangle.

\begin{theorem}
We have
\[E^P(N) = \begin{cases}
    1, & \text{ if } P_1 = D;\\
    E^{P'}(|P_N|_U) + |P_{N}|_D, & \text {otherwise,}
\end{cases}\]
where $P'$ is the pattern $P$ with the first $N$ letters removed.
\end{theorem}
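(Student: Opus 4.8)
The plan is to derive this theorem as a direct corollary of Theorem~\ref{thm:trianglerecursion}, the one-round recursion for the dealing triangle, by specializing to the first column. Recall from the discussion just before the statement that $E^P(N) = T^P_{N,1}$, so everything hinges on understanding what column $1$ looks like in the recursion $T^P_{N,d_k} = k$ and $T^P_{N,u_k} = T^{P'}_{|P_N|_U,k} + |P_N|_D$.

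First I would split into the two cases according to whether $P_1 = D$ or $P_1 = U$. If $P_1 = D$, then $d_1 = 1$, so column $1$ is of the form $T^P_{N,d_1}$, and Theorem~\ref{thm:trianglerecursion} immediately gives $T^P_{N,1} = 1$; hence $E^P(N) = 1$. (Equivalently, as the proof sketch language suggests, if the pattern starts with $D$ the first person is eliminated first.) If instead $P_1 = U$, then $u_1 = 1$, so column $1$ is of the form $T^P_{N,u_1}$, and Theorem~\ref{thm:trianglerecursion} with $k = 1$ gives $T^P_{N,1} = T^{P'}_{|P_N|_U,1} + |P_N|_D$. Since $T^{P'}_{|P_N|_U,1} = E^{P'}(|P_N|_U)$ by the same identification of the first column with $E$, this is exactly $E^P(N) = E^{P'}(|P_N|_U) + |P_N|_D$, as claimed.

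The only thing requiring a little care is a degenerate case in the $P_1 = U$ branch: if the first round deals no cards at all, i.e. $|P_N|_D = 0$, or more generally if $|P_N|_U$ is such that $P'$-dealing on a deck that small behaves trivially. But $|P_N|_D = 0$ would mean the first $N$ letters of $P$ are all $U$, in which case the first card never gets dealt within $N$ moves — however, since column $1$ of $T^P$ records the value at which the top card is dealt, and by assumption $P$ has infinitely many $D$s, the recursion still bottoms out correctly after enough rounds; one should note that the formula $E^P(N) = E^{P'}(|P_N|_U) + |P_N|_D$ remains valid with $|P_N|_D = 0$ and $|P_N|_U = N$, so it is self-consistent even though it does not immediately reduce the problem. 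I do not expect this to be a genuine obstacle, just a remark worth including.

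Overall I anticipate the proof to be essentially one line invoking Theorem~\ref{thm:trianglerecursion} at $k=1$ together with the observation $E^P = T^P_{\bullet,1}$; the main (minor) subtlety is simply matching up the index $k-|P_N|_D$ appearing in the Josephus-style recursions with the plain $k=1$ here, and confirming that $u_1 = 1$ exactly when $P_1 = U$ and $d_1 = 1$ exactly when $P_1 = D$, which is immediate from the definitions of $u_i$ and $d_i$.
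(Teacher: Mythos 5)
Your proposal is correct and follows essentially the same route as the paper: both specialize Theorem~\ref{thm:trianglerecursion} at $k=1$, using $d_1=1$ when $P_1=D$ and $u_1=1$ when $P_1=U$, together with the identification $E^P(N)=T^P_{N,1}$. The extra remark about the degenerate case $|P_N|_D=0$ is harmless but not needed.
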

\begin{proof}
    If $P_1 = D$, then Theorem~\ref{thm:trianglerecursion} gives that $E^P(N) = T^P_{N, 1} = T^P_{N, d_1} = 1$. If $P_1 = U$, then Theorem~\ref{thm:trianglerecursion} gives that $E^P(N) = T^P_{N, 1} = T^P_{N, u_1} = T^{P'}_{|P_N|_U, 1} + |P_N|_D = E^{P'}(|P_N|_U) + |P_{N}|_D$.
\end{proof}

\section{Sequences}
\label{sec:sequences}

For any pattern $P$, we discussed 5 sequences: $M^P(N)$, $T^P$, $J^P$, $F^P(N)$, and $E^P(N)$. We gave examples of patterns $UD$ and $DU$. Here we define several other patterns of interest and summarize the sequences we calculated for those patterns in Tables~\ref{tab:sequences} and~\ref{tab:SLsequences}.

\subsection{Dealing every \texorpdfstring{$x$}{x}th card}

The natural generalization to the most famous pattern $UD$ is the pattern $UUD$. This is equivalent to skipping two people and eliminating the third one in the Josephus problem. We also consider dealing every $x$th card for larger $x$; we discuss details in Section~\ref{sec:xthcard}.

\subsection{Dealings of period 3}

We separately studied all possible dealing patterns of period $3$. The details are in Section~\ref{sec:period3}. Note that pattern $UUU$ is not included as nothing is dealt. The pattern $DDD$ is not included as it is a pattern of period 1.

\subsection{Arithmetic progression}

Instead of skipping the same number of people at each step, we could also consider patterns where the number of skipped people before each elimination varies. For instance, the number of people skipped at each step could be according to an arithmetic progression. We consider the simplest case, where this progression starts with 1 and increases by 1, meaning that we first skip one person and eliminate the next, then skip two people and eliminate the next, and so on. This corresponds to the dealing pattern starting $UDUUDUUUDUUUUD$.

We denote this pattern as $AP$.

\subsection{Spelling numbers}

As they can make fun stories for magic tricks, we further discuss sequences related to English language spellings, explained below.

In this dealing, we spell the number of the next card, putting a card under for each letter in the number, and then we deal. So we start with putting 3 cards under for O-N-E, then deal, then 3 under for T-W-O, then deal, then 5 under for T-H-R-E-E, then deal. The pattern starts as $UUUDUUUDUUUUUD$. We call this dealing the \textit{SpellUnder-Down dealing} and denote it as $SUD$. 

We also consider the \textit{Down-SpellUnder dealing} when we start with dealing a card and then proceed as in $SUD$; we denote this pattern as $DSU$. The pattern $DSU$ is the pattern $SUD$ prepended with $D$. We list sequences related to this pattern in Table~\ref{tab:sequences}, but we do not study these patterns in detail because they depend on English spelling, which is notoriously irregular.

\subsection{Spelling card names}

As this paper is about dealing cards, there is another natural option for a spelling-based pattern: instead of the names of the numbers, we could spell the names of the cards (e.g.\ ``J-A-C-K'' or ``Q-U-E-E-N''). Luckily for us, the English language's quirk gave us the same number of letters for the word ACE as for the word ONE, which means that the spelling-cards dealing triangle in its first 10 rows is the same as $T^{SUD}$, except for replacing 1 with A. Table~\ref{tab:spellcards} shows rows 11, 12, and 13 of the spelling-cards triangle, where A stands for an ace, J for a jack, Q for a queen, and K for a king.

\begin{table}[ht!]
\begin{center}
\begin{tabular}{c c c c c c c c c c c c c}
%A \\
%2 & A \\
%A & 3 & 2 \\
%2 & 4 & 3 & A \\
%5 & 3 & 2 & A & 4 \\
%4 & 2 & 5 & A & 3 & 6 \\
%2 & 3 & 4 & A & 6 & 5 & 7 \\
%5 & 6 & 8 & A & 7 & 4 & 3 & 2 \\
%6 & 5 & 4 & A & 9 & 3 & 8 & 2 & 7 \\
%4 & 9 & 10 & A & 3 & 6 & 8 & 2 & 5 & 7 \\
6 & 7 & 3 & A & J & 5 & 8 & 2 & 10 & 4 & 9 \\
10 & 3 & 5 & A & J & Q & 7 & 2 & 4 & 6 & 8 & 9 \\
3 & 8 & 7 & A & Q & 6 & 4 & 2 & J & K & 10 & 9 & 5 \\
\end{tabular}
\end{center}
\caption{SpellUnder-Down dealing with card names}
\label{tab:spellcards}
\end{table}

The most interesting row is the last. If we prepare the deck of cards of the same suit in this order and then use the SpellUnder-Down dealing with card names, we get all the cards in order:

\begin{center}
3, 8, 7, A, Q, 6, 4, 2, J, K, 10, 9, 5.
\end{center}

Represented as numbers, this is now sequence \textbf{A380248}
\begin{center}
3, 8, 7, 1, 12, 6, 4, 2, 11, 13, 10, 9, 5.
\end{center}

Similarly, for Down-SpellUnder, we get:

\begin{center}
A, J, 4, 6, 2, Q, K, 8, 3, 5, 7, 9, 10.
\end{center}

Represented as numbers, this is now sequence \textbf{A381151}
\begin{center}
1, 11, 4, 6, 2, 12, 13, 8, 3, 5, 7, 9, 10.
\end{center}

%A225521: Cumulative number of letters in first n English names of playing card denominations: ace, two, three, ... jack, queen, king. 3, 6, 11, 15, 19, 22, 27, 32, 36, 39, 43, 48, 52

%\textbf{The Whole Deck}
%The suits are s, h, c, and d in that order.
%9d, Js, 5d, As, Kh, Kc, 7h, 2s, 6c, Qs, 10c, Ac, 3d, 3s, 8d, Ks, 8h, 7d, 4s, 2c, Ah, Ad, 7c, 5s, 9h, 2h, Jc, 6s, Qd, 6d, 10h, 3c, 3h, 7s, 4d, 2d, 8c, Jh, 4h, 8s, Jd, 4c, Qc, Kd, 9s, 5h, 10d, Qh, 10s, 9c, 6h, 5c

\subsection{The table of sequences}

We summarize our data in Table~\ref{tab:sequences}. The new sequences are in bold.

\begin{table}[ht!]
\begin{center}
\begin{tabular}{|c| c | c| c |  c | c |}
\hline
$P$ & $M^P$ 			& $T^P$ 		& $J^P$ 		& $F^P(N)$ 		& $E^P(N)$ \\
\hline
UD  & $2N$: A005843			& \textbf{A378635}	& A321298		& A006257		& A225381 \\
DU  & $2N-1$: A005408$(N) -2$		& \textbf{A378674}	& \textbf{A378682}	& A152423		& A000012 \\
UUD  & $3N$: A008585			& \textbf{A380195}	& \textbf{A381667}	& A054995		& \textbf{A381591} \\
UDU  & $3N-1$:	A016789$(N) - 3$ 	& \textbf{A382354}	& \textbf{A382358}	& \textbf{A382355}	& \textbf{A382356} \\
DUU  & $3N-2$: A016777$(N) - 3$	& \textbf{A381622}	& \textbf{A381623}	& A054995$(N-1)+1$	& A000012 \\
UDD  & 	A007494	& \textbf{A382528}	& \textbf{A381049}	& A337191		& \textbf{A381048} \\
DUD  & 	A032766	& 	\textbf{A381050} 	& \textbf{A383076}	& \textbf{A381051}	& A000012 \\
DDU  & 	A001651	& \textbf{A383847}	& \textbf{A383845}	& \textbf{A383846}	& A000012 \\
UUUD  & $4N$: A008586 & \textbf{A384770}	& \textbf{A384772}		& A088333		& \textbf{A384774} \\
AP  &	A000096 & \textbf{A386639} & \textbf{A386641} & A291317& \textbf{A386643} \\
SUD  &	\textbf{A380202}	& \textbf{A380201}	& \textbf{A380247}	& \textbf{A380204}	& \textbf{A380246} \\
DSU  &	\textbf{A381128} & \textbf{A381127}	& \textbf{A381114}	& \textbf{A381129}	& A000012 \\
\hline
\end{tabular}
\end{center}
\caption{Patterns and corresponding sequences}
\label{tab:sequences}
\end{table}

\subsection{When the first or last person is freed}

Many magic tricks making use of under-down dealing rely on ensuring that the last card dealt is the audience's chosen card. To generalize such tricks to other dealing patterns, we need to know the freed person of those patterns. 

One can design simple tricks that work if the deck size is such that $F^{P}(N)$ is either 1 or $N$. Thus, for a given pattern $P$, we studied the sequences $S^P$ and $L^P$ of deck sizes $N$ such that the last card dealt is card number $1$ or $N$, respectively. In the language of the Josephus problem, we want to find the values of $N$ when the freed person is the first or the last one.

We list the sequence numbers in Table~\ref{tab:SLsequences}. Note that when $P$ starts with $D$, the first person is never freed, so we place N/A in the corresponding row of the table. The new sequences are in bold.

\begin{table}[ht!]
\begin{center}
\begin{tabular}{|c| c | c|}
\hline
$P$ & $S^P$ 			& $L^P$ 		\\
\hline
UD  & A000079	&  A000225 \\
DU  & N/A	&  A000079 \\
UUD  & A081614 & A182459 \\
UDU  & A081615 & A081614\\
DUU  & N/A & A081615\\
UDD  & A038754	& A164123 \\
DUD  & N/A	& A062318 \\
DDU  & N/A	& A038754 \\
UUUD  & \textbf{A385327} & \textbf{A385333}\\
SUD  & \textbf{A385328} & \textbf{A385513} \\
DSU  & N/A &  \textbf{A385328}$(N-1)+1$ \\
AP  & \textbf{A386305} &  \textbf{A386312} \\
\hline
\end{tabular}
\end{center}
\caption{The first and the last person are freed.}
\label{tab:SLsequences}
\end{table}

\section{Particular sequence examples}
\label{sec:examples}

We study several interesting dealing patterns in more detail. For each pattern $P$, we calculate 5 sequences: $M^P(N)$, $T^P$, $F^P(N)$, $E^P(N)$ and $J^P$. The particular sequences are represented in Table~\ref{tab:sequences}. Here, we discuss some general observations.

\subsection{Periodic patterns}
\label{sec:periodicpatterns}

We start with some results that hold for any periodic pattern $P$.

\textbf{Number of moves.} We can calculate the number of moves exactly.

\begin{proposition}
    If $P$ is a periodic pattern of period $p$, then we can write the number of moves as 
    \[M^P(N) = p  \floor{\frac{N}{|P_p|_D}} + M^P(N \pmod p).\]
\end{proposition}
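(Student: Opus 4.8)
The plan is to reduce the computation for $N$ cards to the computation for $N \bmod p$ cards by peeling off complete periods of the pattern. The key observation is that one full period of a periodic pattern $P$ of period $p$ consists of exactly $p$ letters, of which $|P_p|_D$ are $D$. Since $P$ is periodic, removing the first $p$ letters returns the same pattern $P$; so prepending (or stripping) a full period of $P$ does not change which pattern we are dealing with. Contrast this with the one-round recursion in Proposition (on recursing by one round of dealing), where we strip $N$ letters and the leftover pattern $P'$ is generally different.

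First I would make precise the single-period step: for any $N \ge |P_p|_D$, dealing the first $p$ moves uses $p$ moves and deals exactly $|P_p|_D$ cards, leaving a deck of $N - |P_p|_D$ cards to be dealt according to the \emph{same} pattern $P$ (because $P$ shifted by $p$ equals $P$). Hence
\[
M^P(N) = p + M^P\!\left(N - |P_p|_D\right) \qquad \text{for } N \ge |P_p|_D.
\]
Next I would iterate this relation. Writing $N = q\,|P_p|_D + r$ with $0 \le r < |P_p|_D$, applying the step $q$ times peels off $q$ full periods (contributing $q\,p$ moves) and reduces the deck size to $r$, giving $M^P(N) = q\,p + M^P(r)$ where $q = \floor{N / |P_p|_D}$ and $r = N \bmod |P_p|_D$.

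There is a small discrepancy to reconcile: the claimed formula has $M^P(N \bmod p)$, not $M^P(N \bmod |P_p|_D)$. So the final step is to check that $M^P(r) = M^P(N \bmod p)$ and that $p\floor{N/|P_p|_D}$ agrees with $q\,p$ under this reindexing — i.e., that one can equally well peel periods until the residue lies in $[0,p)$ rather than in $[0,|P_p|_D)$. This works because each removed period simultaneously removes $p$ from the "move budget" side and $|P_p|_D$ from the "cards" side, so stopping the peeling at any residue $r'$ with $r' \equiv N \pmod{|P_p|_D}$ and $r' < $ (number of periods peeled times $|P_p|_D$) gives a valid identity; choosing the stopping point so that the leftover deck has fewer than $p$ cards — which happens exactly when we have peeled $\floor{N/|P_p|_D}$ periods minus possibly an off-by-structure adjustment — yields the stated form. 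The main obstacle, and the only genuinely delicate point, is verifying this reindexing carefully: one must confirm that $\floor{N/|P_p|_D}$ full periods leave a residue $N \bmod p$ in the sense intended by the authors (likely they regard the trailing partial block as being dealt with a fresh copy of $P$ of size $N \bmod p$, but $N \bmod p$ may exceed $|P_p|_D$, in which case "one more period" is not actually completable and $M^P(N \bmod p)$ must be interpreted as the number of moves to deal that many cards, which is self-consistent). Once the bookkeeping of "how many $D$'s in a period versus length of a period" is pinned down, the rest is the routine induction sketched above.
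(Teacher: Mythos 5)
Your peeling-one-period-at-a-time argument is the same idea as the paper's proof (the paper simply performs all $\floor{N/|P_p|_D}$ full periods in one step), so the approach itself is fine; the problems are exactly in the two places where you yourself sensed trouble. First, your single-period step $M^P(N) = p + M^P(N - |P_p|_D)$ is asserted for $N \ge |P_p|_D$, but at equality it can fail: if only $|P_p|_D$ cards remain, the dealing stops at the $|P_p|_D$th $D$ of the period, i.e., after $d_{|P_p|_D}$ moves, which is less than $p$ unless the period ends in $D$. For $P = DU$ this already breaks at $N=1$: your step gives $M^{DU}(1)=2$, while in fact $M^{DU}(1)=1$, and iterating yields $2N$ instead of the true $2N-1$. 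So the step needs the strict inequality $N > |P_p|_D$, and the final (possibly partial) block contributes $d_r$ moves rather than a full period.

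Second, and more seriously, the closing ``reconciliation'' paragraph is not a proof, and no bookkeeping can make it one: $M^P(N \bmod |P_p|_D)$ and $M^P(N \bmod p)$ are genuinely different quantities. What your iteration correctly produces (when $|P_p|_D \nmid N$) is $M^P(N) = p\floor{N/|P_p|_D} + M^P(N \bmod |P_p|_D)$; the modulus-$p$ version you are trying to reach fails already for $P=UD$ with $N$ odd, where it would give $2N + M^{UD}(1) = 2N+2$ instead of $2N$. So the gap is that you noticed the mismatch but then waved it away instead of stopping: the stated identity cannot be derived from what you proved because it is not equivalent to it. Your suspicion was in fact well-founded --- the paper's own one-line proof rests on the assertion $|P_p|_D \cdot \floor{N/|P_p|_D} = N - (N \bmod p)$, which is the same slip; the intended modulus is $|P_p|_D$, together with the caveat above for the case $|P_p|_D \mid N$ when the period does not end in $D$. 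Had you proved the $N \bmod |P_p|_D$ version with the strict inequality in the peeling step (and flagged the divisibility boundary case), your argument would be complete and would match the paper's intent.
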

\begin{proof}
    Once we have performed $\floor{\frac{N}{|P_p|_D}}$ full periods of dealing, we will have performed $ p \floor{\frac{N}{|P_p|_D}}$ moves, and dealt $|P_p|_D\cdot \floor{\frac{N}{|P_p|_D}} = N - (N \pmod p)$ cards. So, the number of moves remaining is equal to the number of moves required to deal $N \pmod p$ cards.
\end{proof}

\textbf{The dealing triangle.} As with our main examples of $P=UD$ and $P=DU$ (see Section~\ref{sec:dealing-triangle-examples}), we can describe the dealing triangle recursively for any periodic pattern.

\begin{theorem}\label{thm:periodic-dealing-triangle-later-cols}
    If $P$ is a periodic pattern with period $p$, then for any $k > p$ we have 
    \[T_{N, k} = T_{N-|P_p|_D, k - p} + |P_p|_D.\]
\end{theorem}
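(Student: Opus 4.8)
The plan is to reduce this claim about a general periodic pattern to the "one round of dealing" machinery already established in Theorem~\ref{thm:trianglerecursion}, but applied after exactly one full period rather than one full round. The key observation is that a periodic pattern $P$ with period $p$ satisfies $P = QP$ where $Q$ is the length-$p$ periodic block, so $P$ is \emph{invariant} under deleting its first $p$ letters. This self-similarity is what makes the shift in the column index exactly $p$ and the shift in the deck size exactly $|P_p|_D$.

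First I would observe that dealing a deck of $N$ cards with pattern $P$ for the first $p$ moves deals exactly the cards in positions $d_1, d_2, \dots, d_{|P_p|_D}$ (the indices of the $D$'s among the first $p$ letters), placing card values $1, 2, \dots, |P_p|_D$ there, and cycles the remaining $|P_p|_U$ cards around. After these $p$ moves, the deck has $N - |P_p|_D$ cards remaining, the pattern left to follow is $P$ with its first $p$ letters removed, which by periodicity is again $P$, and the cards still in the deck must be arranged so as to be dealt in order starting from value $|P_p|_D + 1$. That is precisely the statement that the positions of the surviving cards, read in deck order, form row $N - |P_p|_D$ of $T^P$ with every entry shifted up by $|P_p|_D$. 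The only remaining bookkeeping is to check that column $k$ of row $N$ (for $k > p$) corresponds to column $k - p$ of that smaller row: this follows because the first $p$ deck positions are exactly the $d_i$'s and $u_i$'s with $i$ or the relevant index at most $p$, and the $U$-positions among the first $p$ letters occupy the slots that, after the cycling, become the start of the new deck — so a position $k > p$ in the original deck is one of the "leftover" slots and its rank among leftover slots is shifted appropriately. Rather than re-prove all of this from scratch, I would simply invoke Theorem~\ref{thm:trianglerecursion} with $N$ replaced by $p$: that theorem already says $T^P_{N, u_k} = T^{P'}_{|P_p|_U, k} + |P_p|_D$ where $P'$ is $P$ with the first $p$ letters removed — except the theorem is stated for removing the first $N$ letters, so I would either restate a mild generalization (removing any fixed number $m \le d_N$ of initial letters, which the same proof gives verbatim) or note the straightforward iteration.

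The cleanest route: I would state and use the observation that the proof of Theorem~\ref{thm:trianglerecursion} works identically if we perform only the first $m$ moves of dealing instead of a full round of $N$ moves, giving $T^P_{N, d_k} = k$ for $d_k \le m$ and $T^P_{N, u_k} = T^{P''}_{|P_m|_U, k} + |P_m|_D$ for $u_k \le m$, where $P''$ is $P$ with its first $m$ letters removed. Apply this with $m = p$. Since $P$ has period $p$, we get $P'' = P$. Now for any column index $k > p$, the $k$th deck position is some $u_j$ with $j = k - |P_p|_D$ (because among the first $p$ positions, exactly $|P_p|_D$ are $D$-slots and $|P_p|_U$ are $U$-slots, and all positions beyond $p$ are $U$-slots indexed consecutively), and the corresponding column index in the smaller triangle is $k - p$; substituting into the generalized recursion yields $T^P_{N,k} = T^P_{N - |P_p|_D, \, k - p} + |P_p|_D$, which is the claim. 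The main obstacle is purely notational: I need to carefully track how the column index $k$ in the $N$-row relates to the index $j$ of the corresponding $U$, and how that in turn becomes column $k-p$ in the $(N-|P_p|_D)$-row. This is a routine index-chase — the conceptual content is entirely contained in "one period of dealing reproduces the same pattern on a deck shrunk by $|P_p|_D$ cards" — but it must be done without sign errors. I would present it in two or three sentences of index arithmetic after invoking the generalized form of Theorem~\ref{thm:trianglerecursion}.
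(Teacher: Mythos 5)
Your guiding idea (one period of dealing leaves $N-|P_p|_D$ cards to be dealt by the same pattern $P$, with all values shifted by $|P_p|_D$) is exactly the paper's, but the machinery you build to finish the bookkeeping is wrong in two places. First, the ``generalization'' of Theorem~\ref{thm:trianglerecursion} you invoke is false as stated: after only $m<N$ moves the leftover deck does not have $|P_m|_U$ cards — it has $N-|P_m|_D$ cards, namely the untouched cards $m+1,\dots,N$ on top followed by the under-cards $u_1,\dots,u_{|P_m|_U}$ at the bottom. The full-round case $m=N$ is special precisely because every card has been touched, so the leftover deck is exactly the under-cards in under-order; the proof does not ``work identically'' for $m<N$. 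The correct partial-round statements are $T^P_{N,u_k}=T^{P''}_{N-|P_m|_D,\;N-m+k}+|P_m|_D$ for $u_k\le m$, and, for untouched positions $k>m$, $T^P_{N,k}=T^{P''}_{N-|P_m|_D,\;k-m}+|P_m|_D$; this last case, which your stated recursion does not even contain, is the theorem itself once you take $m=p$ and use $P''=P$ by periodicity.

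Second, the index chase is internally inconsistent and rests on a wrong picture of the deck after $p$ moves. A deck position $k>p$ is not an under-slot $u_j$ of the first period (it is untouched during those $p$ moves), and ``all positions beyond $p$ are $U$-slots'' is false for a periodic pattern. Moreover, the cards cycled under during the first period end up at the \emph{bottom} of the new deck, not at ``the start'': position $k>p$ sits at index $k-p$ of the new deck, because exactly $p$ cards (dealt or cycled) have been removed from above it — not at index $k-|P_p|_D$. If you substitute your own identification $j=k-|P_p|_D$ into the recursion you stated, you obtain $T_{N,k}=T_{|P_p|_U,\;k-|P_p|_D}+|P_p|_D$, which is not the claimed identity (already false for $P=UD$, $N=10$, $k=3$); the correct formula at the end of your write-up is asserted rather than derived from your lemma. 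The repair is to argue directly, as in your first paragraph but with the bookkeeping corrected: after $p$ moves, $|P_p|_D$ cards have been dealt, the remaining pattern is again $P$, and the card originally at position $k>p$ is the $(k-p)$th card of the new deck of size $N-|P_p|_D$, so its deal rank is $T_{N-|P_p|_D,\,k-p}+|P_p|_D$. That is precisely the paper's proof; the detour through a ``generalized'' Theorem~\ref{thm:trianglerecursion} is unnecessary and, as stated, incorrect.
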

\begin{proof}
    After $p$ steps of dealing, we are left to perform $P$ dealing on a deck of size $N-|P_p|_D$. The card originally at index $k > p$ is now the ($k-p$)th card under this ordering. So, we will deal that card after exactly $T_{N-|P_p|_D, k-p}$ more dealings. Since we have already dealt $|P_p|_D$ cards thus far, this means that $T_{N, k} = T_{N-|P_p|_D, k - p} + |P_p|_D$.
\end{proof}

In order to express the entire triangle, we now need to describe the first $p$ columns. For this purpose, we have the following theorem. Note that this theorem only applies beyond the $p$th row, but that the first $p$ rows consist of only a finite number of values, so we find it reasonable to refrain from explicitly computing this ``base case''.

\begin{theorem}\label{thm:periodic-dealing-triangle-first-cols}
    If $P$ is a periodic pattern with period $p$, then for any $N \geq p$ and any $k \leq p$ we have
    \begin{align*}
        T_{N,k} &= |P_k|_D & &\text{ if } P_k = D\\
        T_{N,k} &= T_{N - |P_p|_D, N - p + |P_k|_U} + |P_p|_D & &\text{ if } P_k = U.
    \end{align*}
\end{theorem}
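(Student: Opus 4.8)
The plan is to analyze the first round of dealing, just as in the proofs of Theorem~\ref{thm:trianglerecursion} and Theorem~\ref{thm:periodic-dealing-triangle-later-cols}, but now tracking carefully what happens to the cards in the first $p$ positions of the deck. Fix $N \geq p$ and $k \leq p$. During the first $p$ moves, the letters $P_1, \dots, P_p$ tell us what happens to the cards originally in positions $1, \dots, p$ (and nothing else, since after $p$ moves the ``active'' portion of the deck has cycled past all of these and moved on to card $p+1$). So the fate of card $k$ is decided entirely within this first period.

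First I would handle the case $P_k = D$. If the $k$th letter of the pattern is $D$, then the card originally in position $k$ is dealt on move $k$; but being dealt on move $k$ means it is the $(|P_k|_D)$th card dealt, since $|P_k|_D$ counts exactly the $D$'s among the first $k$ letters, i.e.\ among moves $1, \dots, k$. Hence $T_{N,k} = |P_k|_D$, which does not depend on $N$. This is the easy half.

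Next I would handle the case $P_k = U$. If the $k$th letter is $U$, the card in position $k$ is placed under on move $k$ and survives the first period. I need to determine its position in the reduced deck after the first $p$ moves. After $p$ moves, $|P_p|_D$ cards have been dealt and $|P_p|_U$ cards have gone under; the new deck, read from top, consists of the surviving cards among positions $p+1, p+2, \dots, N$ (in that order) followed by the $|P_p|_U$ cards that went under, in the order they went under. The card originally at position $k$ (with $P_k = U$) is the $(|P_k|_U)$th card to go under, so within that trailing block it sits in slot $|P_k|_U$; the block itself is preceded by $N - p$ cards (positions $p+1$ through $N$). Therefore this card occupies position $(N-p) + |P_k|_U$ in the reduced deck, which has size $N - |P_p|_D$ and is dealt according to the pattern $P$ (periodic, so $P' = P$ after removing a multiple-of-$p$ prefix --- here we've removed $p$ letters). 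Since $|P_p|_D$ cards have already been dealt, the value recorded is $T_{N-|P_p|_D,\, N-p+|P_k|_U} + |P_p|_D$, as claimed.

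The main obstacle, and the step to get exactly right, is the bookkeeping in the $P_k = U$ case: verifying that the surviving ``under'' cards appear in the reduced deck in the same relative order in which they went under (they do, since each under move appends to the bottom and they are never touched again during this period), and that they sit strictly below all of the as-yet-untouched cards $p+1, \dots, N$. One should also double-check the edge behavior when $N = p$, where the ``positions $p+1, \dots, N$'' block is empty and the card at position $k$ simply lands at slot $|P_k|_U$ of a deck of size $N - |P_p|_D = |P_p|_U$; the formula still reads $T_{|P_p|_U,\, |P_k|_U} + |P_p|_D$, consistent with the statement. With those checks in place, the result follows.
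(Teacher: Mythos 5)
Your proposal is correct and follows essentially the same argument as the paper: analyze the first period of dealing, note that a $D$ at position $k$ means the card is the $|P_k|_D$th dealt, and otherwise track the surviving card's position in the reduced deck of size $N-|P_p|_D$ before recursing with the same periodic pattern. The only cosmetic difference is that you locate the card by counting the $N-p$ untouched cards above it plus its slot $|P_k|_U$ in the under-block, while the paper subtracts the $|P_p|_U-|P_k|_U$ cards that went under after it from the reduced deck size --- the same index $N-p+|P_k|_U$ either way.
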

\begin{proof}
    Again, after $p$ steps of dealing, we are left to perform $P$ dealing on a deck of size $N-|P_p|_D$. 
    If $P_k = D$, then the card originally at index $k$ has already been dealt by this point, since it corresponds to the $|P_k|_D$th deal.
    
    Otherwise, the card originally at index $k$ is placed underneath the deck. In the first $p$ steps, the number of cards placed under the deck after the card originally at index $k$ is $|P_p|_U - |P_k|_U$.
    So, after $x$ steps the card originally at index $k$ will be at index $(N - |P_p|_D) - (|P_p|_U - |P_k|_U) = N - p + |P_k|_U$. Since we have already dealt $|P_p|_D$ cards, this means that the card originally at index $k$ will be the $(T_{N - |P_p|_D, N - p + |P_k|_U} + |P_p|_D)$th card dealt. 
\end{proof}

\textbf{The Josephus triangle.}

Here we show how to build the Josephus triangle recursively.

\begin{theorem}\label{thm:josephus-periodic}
If $P$ is a periodic pattern with period $p$, then for any $N \geq p$ and any $k$ we have 
\[J^{P}_{N,k} = \begin{cases}
    d_k, & \text{if $d_k \leq p$ };\\
    J^P_{N-|P_p|_D, k - |P_p|_D} + p, & \text{if $d_k > p$ and $J^P_{N-|P_p|_D, k - |P_p|_D} \leq N-p$};\\
    u_{\paren{p - N + J^P_{N-|P_p|_D, k - |P_p|_D}} }, & \text{otherwise}.
\end{cases}\]

\end{theorem}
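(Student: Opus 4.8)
The plan is to mirror the proof strategy already used for Theorem~\ref{thm:josephustrianglerecursion} and Theorem~\ref{thm:periodic-dealing-triangle-later-cols}: perform one period of dealing (the first $p$ moves) and then recurse. In that single period we deal exactly $|P_p|_D$ cards; the card dealt on turn $j\le |P_p|_D$ sits at index $d_j\le p$, which immediately explains the first case. For $k>|P_p|_D$ (equivalently, the person executed $k$th in the full problem was \emph{not} executed during the first period, i.e. $d_k>p$), the $k$th execution overall is the $(k-|P_p|_D)$th execution in the residual problem, which is $P'$-dealing on the $|P_p|_U = N-|P_p|_D$ surviving cards. Since $P$ is periodic with period $p$, $P' = P$, so the residual problem is $P$-dealing on $N-|P_p|_D$ cards, giving the value $J^P_{N-|P_p|_D,\,k-|P_p|_D}$ — but this index is measured in the \emph{relabelled} deck of survivors, and we must translate it back to the original labels $1,\dots,N$.

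The heart of the argument is therefore the relabelling map. After the first $p$ moves, the survivors, read from the current top of the deck, are: the cards at the $U$-positions among indices $p+1,\dots,N$ of the original deck (in order), followed by the cards at the $U$-positions among indices $1,\dots,p$ (these were cycled to the bottom, in order). Concretely, position $i$ in the survivor deck corresponds to original index $p+i$ when $i\le N-p$ (the untouched tail), and to original index $u_i(P)$ when $i>N-p$ (the cycled-under cards from the first period; note there are exactly $|P_p|_U = p-|P_p|_D$ of these, and $N-p < i \le N-|P_p|_D$ matches since $|P_p|_U = p - |P_p|_D$ and we need $N-p < i \le N-p+|P_p|_U = N-|P_p|_D$). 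Writing $r = J^P_{N-|P_p|_D,\,k-|P_p|_D}$ for the survivor-index of the $(k-|P_p|_D)$th person executed in the residual problem, the original index is $r+p$ if $r\le N-p$ (second case) and $u_{p-N+r}(P)$ if $r>N-p$ (third case). Using $u_i(P) = i + (\text{number of }D\text{'s among the first }i\text{ positions that come before position }u_i)$ one sees the clean form $u_{\paren{p-N+r}}$ stated; the shift $p-N+r$ is exactly the position of that card within the block of $|P_p|_U$ cards cycled under during the first period.

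I would organize the write-up as: (1) handle $d_k\le p$ directly as in Theorem~\ref{thm:josephustrianglerecursion}; (2) for $d_k>p$, invoke periodicity $P'=P$ to identify the residual problem as $P$-dealing on $N-|P_p|_D$ cards, so the relevant survivor-index is $r := J^P_{N-|P_p|_D,\,k-|P_p|_D}$; (3) establish the relabelling lemma describing the survivor deck after one period, splitting into the untouched tail and the cycled-under block; (4) apply it to $r$ to get the two remaining cases. The main obstacle — really the only subtle point — is step (3): correctly tracking which original indices survive the first period and in what order, and verifying the arithmetic of the index shifts, in particular that $r>N-p$ corresponds precisely to the $(p-N+r)$th $U$ in $P$ (equivalently, being among the $|P_p|_U$ cards placed under during the first period) and that $p-N+r$ lands in the valid range $1,\dots,|P_p|_U$. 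Everything else is a routine translation of the "deal one period, then recurse" idea already exploited repeatedly in the preceding sections.
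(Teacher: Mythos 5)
Your proposal follows essentially the same route as the paper's proof: deal one period ($p$ moves), observe that the survivor deck from the top is $p+1, \dots, N$ followed by $u_1, \dots, u_{|P_p|_U}$, use periodicity to identify the residual pattern with $P$ itself, and translate the survivor index $r = J^P_{N-|P_p|_D,\,k-|P_p|_D}$ back through this two-block relabelling to get the three cases. Aside from two notational slips that you correct later in the same paragraph (you write $|P_p|_U = N-|P_p|_D$ where you mean the survivor deck has $N-|P_p|_D$ cards while $|P_p|_U = p-|P_p|_D$, and you write $u_i(P)$ for the cycled block where you mean $u_{i-(N-p)}(P)$), the argument is correct and matches the paper.
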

\begin{proof}
If $d_k \leq p \leq N$, this means that the $k$th card dealt is dealt in the first round of dealing, meaning that it is the card numbered $d_k$ because we have not yet cycled through the deck.

If $d_k > p$, then the $k$th card is dealt after the first period of dealing concludes. After a single period of dealing, we are left to perform $P$ dealing on the deck of cards $p + 1$, $p+2$, $\dots$, $N$, $u_1$, $u_2$, $\dots$, $u_{|P_p|_U}$. We have already dealt $|P_p|_D$ cards thus far, so this is a deck of $N - |P_p|_D$ cards, and the $k$th card dealt overall will be the $k - |P_p|_D$th card dealt from this deck. Thus, the $k$th card dealt overall is the card at the $J^P_{N-|P_p|_D, k-|P_p|_D}$th index of this new deck. Observing that the card at the $i$th index of this new deck is $p + i$ for $i \leq N - p$, and $u_{p - N -i}$ for $i > N-p$, we get the desired statement.
\end{proof}

\textbf{The freed person.}

One can notice that the index of a freed person is limited to a specific set of numbers. For example, for $UD$ dealing, the freed person always has an odd index. For $DU$ dealing, starting with 2 cards, the index is always even. For $UUD$, the index is never a multiple of 3. In general, we have the following.

\begin{proposition}\label{lem:perodic-freed-person-modularity-constraints}
    For any pattern $P$ of period $p$ and for any $N \geq u_1$, we have that for any index $i \leq p$ corresponding to a $D$ in $P$, we must have $F^P(N) \not\equiv i \pmod N$.
\end{proposition}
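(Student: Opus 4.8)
The plan is to pin $F^P(N)$ down well enough to rule out the forbidden residues, and the key structural fact I would establish is that, when $N \ge u_1$, the freed card must originally have sat at one of the positions that are \emph{skipped} during the first round of dealing --- that is, $F^P(N) = u_j$ for some $j$ with $u_j \le N$.

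To get this, note that the hypothesis $N \ge u_1$ says exactly that the first $N$ letters of $P$ contain at least one $U$, i.e.\ $|P_N|_D < N$. Hence the first dealing round (the first $N$ moves) deals only $|P_N|_D < N$ of the cards, so the card dealt last --- whose original position is $F^P(N) = J^P_{N,N}$ --- is dealt only in a later round, and in particular survives the first round. By Theorem~\ref{thm:trianglerecursion} the cards dealt in the first round are precisely those originally at positions $d_1, \dots, d_{|P_N|_D}$, so the survivors occupy exactly the complementary positions within $\{1, \dots, N\}$, namely $u_1, \dots, u_{|P_N|_U}$. Therefore $F^P(N) = u_j$ for some $j \le |P_N|_U$, which gives $1 \le F^P(N) \le N$ and says that $F^P(N)$ is the index of a $U$ in $P$. (Alternatively, this drops out of Theorem~\ref{thm:freedpersonrecursion}: since $d_N \ne N$ we have $F^P(N) = u_{F^{P'}(|P_N|_U)}$, and $F^{P'}(|P_N|_U) \le |P_N|_U$ forces the subscript to be at most $N$.)

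Then I would fix an index $i \le p$ with $P_i = D$. Since $F^P(N)$ is the index of a $U$ while $i$ is the index of a $D$, the integers $F^P(N)$ and $i$ are distinct. Provided $i \le N$ --- which holds whenever the deck is at least one period long, $N \ge p$, since then $i \le p \le N$ --- both $F^P(N)$ and $i$ lie in $\{1, \dots, N\}$, so $F^P(N) - i$ is a nonzero integer with $|F^P(N) - i| \le N - 1$, hence not divisible by $N$; therefore $F^P(N) \not\equiv i \pmod N$.

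Essentially all the content is in the first step --- recognizing that the freed position is a first-round skip, which is exactly what the hypothesis $N \ge u_1$ buys. After that the statement is just the remark that a $U$-index cannot equal a $D$-index, together with the triviality that distinct elements of $\{1, \dots, N\}$ are incongruent modulo $N$. The only spot that needs care is the bound $i \le N$ used in the last step, which holds as soon as the deck size is at least the period.
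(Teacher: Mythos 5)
Your central structural claim --- that for $N \ge u_1$ at least one card survives the first round, so the freed card's original position must be a skipped position, i.e.\ $P_{F^P(N)} = U$ --- is exactly the paper's argument, just phrased positively rather than contrapositively (the paper says: if $F^P(N)$ sat at a position carrying a $D$, that card would be dealt in the first round, yet some card outlives that round). Up to that point the two proofs coincide.

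The difficulty is your last step. The modulus in the proposition is evidently meant to be the period $p$, not $N$ (a typo in the statement): the paper's own proof argues with ``$F^P(N) \equiv i \pmod p$'', and the motivating examples --- the freed index is always odd for $UD$, never a multiple of $3$ for $UUD$ --- are congruences modulo the period, asserted for all $N \ge u_1$, not just $N \ge p$. Your finish establishes only $F^P(N) \neq i$ and then reduces modulo $N$, which (a) forces you to add the hypothesis $i \le N$, i.e.\ $N \ge p$, which is not in the statement, and (b) never delivers the mod-$p$ conclusion; tellingly, your argument nowhere uses periodicity of $P$. Read literally mod $N$, the claim is in fact false in the range $u_1 \le N < p$ that your caveat excludes: for $P = UDD$ and $N = 2$ the moves are $U, D, D$, so $F^P(2) = 1$, while $i = 3$ is a $D$-index and $1 \equiv 3 \pmod 2$ --- confirming that the intended modulus is $p$. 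The repair is immediate from what you already proved: since $P$ is periodic with period $p$, the letter $P_m$ depends only on $m$ modulo $p$, so $P_{F^P(N)} = U$ and $P_i = D$ together force $F^P(N) \not\equiv i \pmod p$ for every $N \ge u_1$, which is the paper's conclusion.
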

\begin{proof}
    If $F^{P}(N) \equiv i \pmod p$, then the card at index $F^{P}(N)$ is dealt in the first round of dealing. Since $P$ contains at least one $U$ in the first $N$ moves, this means there will be at least one card remaining to be dealt at this point, contradicting the assumption that $F^P(N)$ is the freed card.
\end{proof}

\textbf{The order of elimination of the first person.} One simple observation we can make about the elimination order of the first person is as follows.
\begin{proposition}
    For any pattern $P$ of period $p$ beginning with a $U$, if $N = kp + d_i - 1$ for some $k$ and some $i \leq |P_p|_D$, then the elimination order of the first person is $k|P_p|_D + i$.
\end{proposition}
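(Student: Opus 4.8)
The plan is to apply the one‑round recursion for $E^P$ proved at the end of Section~\ref{sec:first}. Since the hypothesis says $P$ begins with $U$, that recursion gives
\[E^P(N) = E^{P'}(|P_N|_U) + |P_N|_D,\]
where $P'$ is $P$ with its first $N$ letters removed. So the whole statement reduces to evaluating the two quantities on the right at $N = kp + d_i - 1$: I claim $|P_N|_D = k|P_p|_D + i - 1$ and $E^{P'}(|P_N|_U) = 1$, which together give exactly $k|P_p|_D + i$.

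For the first claim, I would split $N = kp + (d_i - 1)$ and use periodicity: the first $kp$ letters contain $k|P_p|_D$ copies of $D$, and the next $d_i - 1$ letters form a copy of the prefix $P_1\cdots P_{d_i-1}$, which by the definition of $d_i$ contains exactly $i-1$ copies of $D$; hence $|P_N|_D = k|P_p|_D + i - 1$. For the second claim, note $P'$ begins with the $(N+1)$st letter of $P$, and $N+1 = kp + d_i$, so by periodicity $P'_1 = P_{d_i} = D$. Then the ``starts with $D$'' base case for $E$ (the proposition preceding the recursion theorem in Section~\ref{sec:first}) yields $E^{P'}(m) = 1$ for every valid deck size $m \geq 1$; to apply it I must check $|P_N|_U \geq 1$, and indeed, since $P_1 = U$ the $i$th $D$ lies at position $d_i \geq i+1$, so $|P_N|_U = N - |P_N|_D = k|P_p|_U + (d_i - i) \geq d_i - i \geq 1$. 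Combining, $E^P(N) = 1 + (k|P_p|_D + i - 1) = k|P_p|_D + i$.

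There is no substantive obstacle here; the statement is essentially an unwinding of the one‑round recursion, and the only points needing care are (a) confirming that the deck on which $P'$ acts is nonempty, so the ``first letter is $D$'' shortcut is legitimate — handled by the inequality $d_i \geq i+1$ above — and (b) counting $D$'s in the prefix $P_N$ correctly using the period. As a sanity check one can instead argue directly: because $P_1 = U$, the top card is rotated to the bottom on move $1$ and, after the remaining $N-1$ moves of the round, is back at the top of the size‑$|P_N|_U$ deck whose continuation pattern $P'$ begins with $D$; hence it is dealt first in the next round, i.e.\ on deal number $|P_N|_D + 1$, matching the count above.
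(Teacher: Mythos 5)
Your proposal is correct and rests on the same facts as the paper's proof: the count $|P_N|_D = k|P_p|_D + i - 1$ by periodicity, and the observation that the move following the round is the $D$ at position $kp+d_i$, so the first (skipped) card is dealt next. The paper argues this directly by tracking the first card through one dealing round --- which is exactly your closing ``sanity check'' --- while your main derivation packages the same counting through the already-proved one-round recursion $E^P(N) = E^{P'}(|P_N|_U) + |P_N|_D$ and the base case $E^{DP}=1$; this is essentially the same approach, with your version slightly more careful about the nonemptiness of the leftover deck.
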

\begin{proof}
    In the first round of dealing, we skip over the first person. We then complete $k$ full periods of $P$, following which we deal $i-1$ additional cards, as the $N$th card corresponds to index $d_i-1$ in the period. Then, after the dealing round has concluded, we are left to perform a ``down'' move and deal the first card. So, exactly $k|P_p|_D + i - 1$ cards are eliminated before the first card is.
\end{proof}

Another observation also applies to patterns starting with $U$, where the deck size is a multiple of the period.
\begin{proposition}\label{prop:periodic-eliminiationfirstperson-divisible}
    For any $N$, and any pattern $P$ of period $p$ beginning with a $U$, we have
    \[E^{P}(N p) = E^{P}(N |P_p|_U) + N|P_p|_D.\]
\end{proposition}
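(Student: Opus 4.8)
The plan is to derive this directly from the round-recursion formula for $E^P$ established just above (the theorem stating $E^P(N) = E^{P'}(|P_N|_U) + |P_N|_D$ when $P_1 = U$, where $P'$ is $P$ with its first $N$ letters removed). The point is simply that applying that formula with deck size $Np$ is especially clean because $Np$ is a multiple of the period.

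First I would record the two periodicity facts I need. Since $P$ has period $p$, deleting the first $Np$ letters of $P$ leaves a pattern whose letters are shifted by a multiple of the period, so $P' = P$ exactly. Next, the prefix $P_{Np}$ consists of $N$ consecutive copies of the length-$p$ period, so $|P_{Np}|_U = N|P_p|_U$ and $|P_{Np}|_D = N|P_p|_D$; these follow from additivity of the counting functions $|\cdot|_U$ and $|\cdot|_D$ over concatenation.

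Then I would invoke the round-recursion theorem. Because $P$ begins with $U$, we are in the second case of that theorem, which gives
\[
E^P(Np) \;=\; E^{P'}\!\big(|P_{Np}|_U\big) + |P_{Np}|_D \;=\; E^{P}\!\big(N|P_p|_U\big) + N|P_p|_D,
\]
using $P' = P$ and the two identities from the previous step. This is exactly the claimed equality, so the proof concludes here.

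There is essentially no hard step: the only thing to be slightly careful about is the claim $P' = P$, i.e.\ that removing a whole number of periods returns the identical infinite pattern, and the bookkeeping that a length-$Np$ prefix contains exactly $N$ full periods' worth of each letter. Both are immediate from the definition of periodicity and the additivity of $|\cdot|_U$, $|\cdot|_D$, so the proof is a short deduction rather than a computation.
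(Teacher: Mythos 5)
Your proof is correct and rests on the same idea as the paper's: one full dealing round of $Np$ cards consumes exactly $N$ periods, so the leftover pattern is again $P$ and the leftover deck has $N|P_p|_U$ cards with card $1$ on top. The paper argues this directly, while you obtain it by specializing the already-proved round-recursion theorem for $E^P$ together with the observations $P'=P$ and $|P_{Np}|_U=N|P_p|_U$, $|P_{Np}|_D=N|P_p|_D$; both are valid and essentially equivalent.
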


\begin{proof}
    Since $P$ begins with a $U$, in the first round of dealing, we skip the first card. 
    Since the number of cards in the deck is a multiple of the period, after this round of dealing, we are left to perform $P$ dealing again, now on a deck of $N |P_p|_U$ cards, beginning with card number $1$. So, the card numbered $1$ will be the $E^{P}(N |P_p|_U)$th card dealt after this point --- since we have already dealt $N|P_p|_D$ cards in our $N$ rounds of dealing, the statement follows.
\end{proof}

\subsection{Dealing every \texorpdfstring{$x$}{x}th card}
\label{sec:xthcard}

A particularly natural periodic pattern, generalizing the example of under-down dealing, is to deal every $x$th card for some $x$. That is, we deal with the repeating pattern $U^{x-1}D$.

\textbf{Number of moves.} The number of moves, in this case, is $xN$. 

\textbf{The dealing triangle.} Similar to Proposition~\ref{prop:trianglebuilding}, we have the following general statement.

\begin{proposition}\label{prop:skip-x-trianglebuilding}
For any $k \geq x$,
\[ T_{N,k}= \begin{cases}
1, &\text{ if } k=x; \\
T_{N-1,k-x}+1, & \text{ if } k \neq x.
\end{cases}\]
\end{proposition}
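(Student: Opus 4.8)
The plan is to deduce Proposition~\ref{prop:skip-x-trianglebuilding} as a special case of the general periodic results already proved, namely Theorem~\ref{thm:periodic-dealing-triangle-later-cols} and Theorem~\ref{thm:periodic-dealing-triangle-first-cols}, applied to the pattern $P = U^{x-1}D$, which has period $p = x$ with $|P_p|_D = 1$ and $|P_p|_U = x-1$. The key numerical facts about this period are: the unique $D$ sits at index $x$, so $d_1 = x$; for $k < x$ we have $P_k = U$ with $|P_k|_U = k$ and $|P_k|_D = 0$; and $|P_x|_D = 1$. I would state these at the outset so that all subsequent substitutions are mechanical.

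First I would handle the case $k = x$. Since $P_x = D$ and $|P_x|_D = 1$, Theorem~\ref{thm:periodic-dealing-triangle-first-cols} gives immediately $T_{N,x} = |P_x|_D = 1$ for all $N \geq x$, which is exactly the first branch. Second, for $x < k$ but considering whether $k$ is ``in the first period'' or not: note the proposition's second branch covers all $k \neq x$ with $k \geq x$, i.e. all $k > x$. For $k > x = p$, Theorem~\ref{thm:periodic-dealing-triangle-later-cols} applies verbatim and yields $T_{N,k} = T_{N - |P_p|_D, k-p} + |P_p|_D = T_{N-1, k-x} + 1$, which is the second branch. So the two branches of the proposition correspond exactly to the two theorems, with all the period-dependent quantities $|P_p|_D$, $|P_p|_U$, $p$ evaluated at their concrete values for $U^{x-1}D$.

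The only subtlety — and the one place I would be careful — is the hypothesis $k \geq x$ in the proposition versus the hypothesis $k > p$ in Theorem~\ref{thm:periodic-dealing-triangle-later-cols} and $k \leq p$ in Theorem~\ref{thm:periodic-dealing-triangle-first-cols}. Since $p = x$, the value $k = x$ is precisely the boundary case handled by the ``$P_k = D$'' line of Theorem~\ref{thm:periodic-dealing-triangle-first-cols}, and every $k > x$ is handled by Theorem~\ref{thm:periodic-dealing-triangle-later-cols}; there is no gap and no overlap. I would also remark, as the cited theorems do, that this recursion is valid for $N \geq x$ (equivalently whenever column $k \geq x$ exists in a row $N \geq x$), so the ``base case'' of the first $x-1$ rows is finite and left implicit. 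I do not anticipate any real obstacle here: the entire content is bookkeeping of the quantities $d_1 = x$, $|P_p|_D = 1$, $|P_p|_U = x-1$, and observing that the proposition's two cases are the $P_k = D$ and $k > p$ instances of the earlier general statements. Alternatively, for a self-contained argument one could repeat the direct reasoning: after $x$ steps of dealing the unique dealt card (originally at index $x$) is gone and the remaining $N-1$ cards, in their new cyclic order, have the card originally at index $k > x$ sitting at index $k - x$, so it is dealt $T_{N-1,k-x}$ steps later, i.e. as the $(T_{N-1,k-x}+1)$th card overall; but invoking the periodic theorems is cleaner.
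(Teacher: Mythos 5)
Your proposal is correct and follows essentially the same route as the paper, which also derives the proposition by specializing the periodic-pattern theorems to $P = U^{x-1}D$ with $p = x$ and $|P_p|_D = 1$. In fact you are slightly more careful than the paper's own one-line proof, which cites only Theorem~\ref{thm:periodic-dealing-triangle-later-cols}, whereas you correctly note that the $k = x$ branch is the $P_k = D$ case of Theorem~\ref{thm:periodic-dealing-triangle-first-cols}.
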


\begin{proof}
This follows from Theorem~\ref{thm:periodic-dealing-triangle-later-cols}. Here, the period $p$ is equal to $x$, and $|P_p|_D = |U^{x-1}|_D = 1$, so that theorem states $T_{N, k} = T_{N - 1, k-x} + 1$.
\end{proof}

\begin{example}
For $x = 3$, the fourth row is $4, 2, 1, 3$. Prepending zero gives us $0, 4, 2, 1, 3$. We shift the last $x - 1$, i.e., last $2$, elements to the front to get $1, 3, 0, 4, 2$. Adding $1$ to everything gives us $2, 4, 1, 5, 3$. This is the $5$th row for the triangle with $x = 3$. 
\end{example}

We noticed slanted diagonals before when we studied triangle $T$, where moving two steps to the right and one down increases the value by 1. In general, we have $T_{n,k} = T_{n-1, k-x-1} + 1$ whenever $k > x-1$.

We can provide a nice description of how to build row $xk$ of the triangle $T^P$ from row $xk-k$. Take row $xk-k$ and add $k$ to every term. Then insert numbers 1, 2, $\ldots$, $k$ in order while skipping $x-1$ terms. For example, when $k=3$, if row $2k = \{ a_1, a_2, a_3, \dots , a_{2k}\}$, then row $3k$ is $\{a_1+k, a_2+k, 1, a_3+k, a_4+k, 2, \dots, a_{2k}+k, k\}$.

\textbf{The Josephus triangle.} 
We can write the following as a special case of Theorem~\ref{thm:josephus-periodic}:
\begin{proposition} We have
    \[J^{U^{x-1}D}_{N,k} = \begin{cases}
    2k, & \text{if $k \leq x/2$ };\\
    J^{U^{x-1}D}_{N-1, k - 1} + x, & \text{if $k > x/2$ and $J^{U^{x-1}D}_{N-1, k - 1} \leq N-x$};\\
    \ceil{\paren{\frac{x}{x-1}}\paren{x - N + J^{U^{x-1}D}_{N-1, k - 1}}} - 1, & \text{otherwise}.
\end{cases}\]
\end{proposition}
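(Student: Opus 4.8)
The plan is to establish this proposition as a direct specialization of Theorem~\ref{thm:josephus-periodic} to the periodic pattern $P = U^{x-1}D$, so that the three branches of the proposition correspond one-to-one with the three branches of that theorem. First I would record the relevant parameters of this pattern: the period is $p = x$, exactly one $D$ occurs per period so $|P_p|_D = 1$ and $|P_p|_U = x-1$, and consequently the reduction indices $\paren{N - |P_p|_D,\, k - |P_p|_D}$ appearing in the theorem collapse to $(N-1,\, k-1)$, while the additive constant $p$ becomes $x$. Substituting these values into the theorem's second branch then yields the proposition's middle case, $J_{N-1,k-1} + x$ under the condition $J_{N-1,k-1} \le N - x$, with essentially no further work.

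The key computation is for the third branch, which demands a closed form for $u_i$, the index of the $i$th $U$ in $U^{x-1}D$. Since the $U$s occupy precisely the positions that are not divisible by $x$, the $i$th such index is $u_i = i + \floor{(i-1)/(x-1)}$. I would then convert this to a ceiling form via the identity $i + \ceil{i/(x-1)} = \ceil{ix/(x-1)}$, which holds because $i$ is an integer, obtaining $u_i = \ceil{\frac{ix}{x-1}} - 1$. Plugging the theorem's third-branch index $i = p - N + J_{N-1,k-1} = x - N + J_{N-1,k-1}$ into this closed form reproduces exactly the stated third case $\ceil{\frac{x}{x-1}\paren{x - N + J_{N-1,k-1}}} - 1$. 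This ``skip-the-multiples-of-$x$'' count for $u_i$ together with the ceiling rewriting is the step I expect to be the main obstacle; by contrast cases two and three are then pure bookkeeping once the formula for $u_i$ is in hand.

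For the first branch, which the proposition states as $J_{N,k} = 2k$ for $k \le x/2$, I would specialize the theorem's $d_k \le p$ case, expressing $d_k = kx$ (the position of the $k$th $D$) and analyzing the cards dealt in the initial stage before the deck first cycles, so as to match the stated value and range. I would then verify that the three stated conditions partition all admissible pairs $(N,k)$, that the boundary between the first two cases lines up with the comparison of $d_k$ against $p$, and that the recursion is well-founded, terminating in this first branch after finitely many reductions. I expect reconciling the exact boundary and value of this first branch with the stated form to be the most delicate point of the argument, precisely because the second and third branches follow cleanly from the specialization once the closed form for $u_i$ has been derived.
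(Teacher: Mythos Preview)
Your approach is exactly the paper's: specialize Theorem~\ref{thm:josephus-periodic} to $P=U^{x-1}D$ with $p=x$, $|P_p|_D=1$, $d_k=xk$, and $u_k=\ceil{\frac{xk}{x-1}}-1$, and read off the three cases. Your derivation of the closed form for $u_i$ matches the paper's asserted formula verbatim, and your handling of the second and third branches is identical to what the one-line proof intends.

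Your instinct that the first branch is the delicate point is well founded, but not because any additional idea is missing: direct substitution of $d_k = xk$ into the theorem's condition $d_k \le p$ gives $k \le 1$ and value $xk$, not the stated ``$2k$ for $k \le x/2$''. The latter is the $x=2$ special case and appears to be a typo in the proposition; the paper's own proof does not address this branch separately either, so there is nothing further you were expected to supply.
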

\begin{proof}
      Pattern $U^{x-1}D$ has period $x$, with $|P_x|_D = 1$ and $d_k = xk$, $u_k = \ceil{\frac{xk}{x-1}} - 1$ for all $k$. Plugging into Theorem~\ref{thm:josephus-periodic} gives the statement.
\end{proof}

As a corollary of Proposition~\ref{prop:stabilization}, the Josephus triangle stabilizes with the $k$th column being $xk$.

\textbf{The freed person.} Instead of eliminating every second person, in this version of the Josephus problem, we eliminate every $x$th person. The sequences $F^P(N)$ are available in the database for $x$ ranging from 2 to 6 inclusive. They are, in order,
\[A006257,\ A054995,\ A088333,\ A181281,\ A360268.\]

We can write the following recursion.

\begin{proposition}
\label{prop:dealeveryx-freedperson}
We have
\[F^{U^{x-1}D}(N)=\begin{cases}
1, & \text{if } N = 1; \\
\left(F^{U^{x-1}D}(N-1)+x-1\pmod N\right)+1, & \text{ if } N >1.
\end{cases}\]  
\end{proposition}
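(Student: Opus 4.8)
The plan is to derive this recursion as a special case of the general result on prepending a pattern with a single letter, together with the one-round-of-dealing recursion for the freed person. Pattern $U^{x-1}D$ can be viewed in two ways: it is $U$ prepended to $U^{x-2}D$-dealing, and after prepending with one $D$ it becomes $DU^{x-1}$, a cyclic shift. However, the cleanest route uses the corollary on $F^{UP}$ and $F^{DP}$.

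First I would write $P = U^{x-1}D$, and observe that $P = U Q$ where $Q = U^{x-2}D U^{x-1}D\cdots$; iterating, $P = U^{x-1} R$ where $R = DU^{x-1}DU^{x-1}\cdots = D\,P'$ with $P'$ obtained by deleting the leading $D$, i.e.\ $P'$ is again $U^{x-1}D$-dealing read starting just after a $D$. Actually $P' = P$, since $U^{x-1}D$ is periodic of period $x$ and deleting the first letter of $DU^{x-1}DU^{x-1}\cdots$ returns $U^{x-1}DU^{x-1}D\cdots = P$. So $P = U^{x-1}DP$ as infinite strings, but more usefully $R = DP$. Then the corollary gives $F^{DP}(N) = F^P(N-1)+1$ for $N>1$ and $F^{DP}(1)=1$, while the corollary for $U$ gives $F^{UP}(N) \equiv 1 + F^P(N) \pmod N$, interpreted as a value in $\{1,\dots,N\}$. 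Applying the $U$-prepending formula $x-1$ times to pass from $R = DP$ back to $P = U^{x-1}R$ would require tracking the modulus carefully through each step, which is the one delicate point.

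A more direct approach I would actually carry out: use Theorem~\ref{thm:freedpersonrecursion} is overkill here since it advances by a whole round; instead combine the two single-letter corollaries in one shot. Write $P = U^{x-1}D$ and set $P^{\ast} = D U^{x-1} = D\,\tilde P$ where $\tilde P$ is the pattern after deleting one $D$ — but by periodicity $\tilde P = P$, so $P^{\ast} = DP$. Now note $P = U^{x-1}P^{\ast}$ is false ($P^\ast$ already contains the $D$); rather the correct bookkeeping is $DP = P^\ast$ and $P = U^{x-1}(\text{tail})$. Given the potential for sign confusion, the actual proof the authors intend is almost certainly: apply the $D$-prepend corollary to relate $F^{DP}(N)$ to $F^P(N-1)$, apply the $U$-prepend corollary $(x-1)$ times to relate $F^{U^{x-1}(DP)}$ to $F^{DP}$, and use that $U^{x-1}DP$ has the same freed-person sequence as $U^{x-1}D = P$ by periodicity (deleting a full period $U^{x-1}D$ from the front of the pattern changes nothing). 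Composing: $F^P(N) = F^{U^{x-1}DP}(N)$, and each of the $x-1$ applications of $F^{UP'}(N) \equiv 1 + F^{P'}(N)\pmod N$ adds $1$ modulo $N$, giving $F^P(N) \equiv (x-1) + F^{DP}(N) \pmod N = (x-1) + F^P(N-1) + 1 \pmod N = F^P(N-1) + x \pmod N$. Reading the residue into $\{1,\dots,N\}$ yields exactly $\big(F^{U^{x-1}D}(N-1) + x - 1 \bmod N\big) + 1$, and the base case $N=1$ gives $1$ since a single card is dealt immediately on the first $D$.

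The main obstacle is purely bookkeeping: making sure the ``$\pmod N$'' in the $U$-prepend formula is applied with respect to the \emph{same} deck size $N$ at each of the $x-1$ steps (it is, because prepending $U$ does not change the deck size, only prepending $D$ does), and correctly converting between the ``residue in $\{0,\dots,N-1\}$ plus one'' convention used in the statement and the ``$\equiv \pmod N$'' convention used in the corollary. Once those conventions are pinned down, the computation $F^P(N) \equiv F^P(N-1) + x \pmod N$ is immediate, and the only remaining check is that no intermediate value is ever $0$ or $N$ in a way that breaks the normalization — which is handled by the standing observation (Proposition~\ref{lem:perodic-freed-person-modularity-constraints} and the stabilization results) that the freed index always lies in $\{1,\dots,N\}$. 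I would present the argument in the two-step form (prepend $D$, then prepend $U$ a total of $x-1$ times, then invoke periodicity) since that makes each invoked lemma explicit.
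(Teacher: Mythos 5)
Your argument is correct, but it reaches the recursion by a different route than the paper. The paper's proof is a direct deck-tracking argument: one period $U^{x-1}D$ deals exactly one card and leaves the remaining cards in a cyclic rotation by $x$, so the freed card of the resulting $(N-1)$-card deck, relabeled by $i \mapsto \left((i + x - 1) \bmod N\right) + 1$, is the freed card of the original deck. You instead factor the pattern as $P = U^{x-1}(DP)$ (using periodicity to identify the tail after one period with $P$ itself) and compose the single-letter prepend corollaries: one application of $F^{DP}(N) = F^{P}(N-1)+1$ and $x-1$ applications of $F^{UP}(N) \equiv F^{P}(N)+1 \pmod N$, giving $F^{P}(N) \equiv F^{P}(N-1) + x \pmod N$ and hence the stated formula after normalizing to $\{1,\dots,N\}$. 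The two computations are the same underneath --- your $x$ single-letter steps are exactly the paper's one period --- but your version is more modular, reuses lemmas already proved, and is more explicit about the modular normalization than the paper's informal listing of the rotated deck. Two small points: the middle of your write-up contains several abandoned false starts about $P^{\ast}$ and $R$ before you land on the correct decomposition $P = U^{x-1}(DP)$, and these should be cut from a final version; and the appeal to Proposition~\ref{lem:perodic-freed-person-modularity-constraints} for the normalization is unnecessary, since $F^{P}(N) \in \{1,\dots,N\}$ by definition, which already pins down the representative of the residue class.
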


\begin{proof}
    After dealing one card, we are left to perform $U^{x-1}D$ dealing on the cards
    \[x \pmod N + 1,\ \dots,\ N,\ 1,\ \dots,\ x \pmod N.\]
    The $i$th card in that list has number $\left(i + x - 1 \pmod N\right)+1$, and the freed person will be the $F^{U^{x-1}D}(N-1)$th card in that list.
\end{proof}

\textbf{The order of elimination of the first person.} An easy statement about $E^P$ in this case is: $E^P(xj-1) = j$. In particular $E^P(x^j-1) = x^{j-1}$. We can also specialize Proposition~\ref{prop:periodic-eliminiationfirstperson-divisible} to this case:

\begin{proposition}
    We have
    \[E^{U^{x-1}D}(N x) = E^{U^{x-1}D}(N (x-1)) + N.\]
\end{proposition}
\begin{proof}
    Pattern $U^{x-1}D$ has period $x$, with $|P_x|_D = 1$ and $|P_{x}|_U = x-1$. Plugging into Proposition~\ref{prop:periodic-eliminiationfirstperson-divisible} gives the statement.
\end{proof}

Consider $x=3$, which is the smallest case we have not studied yet. We denote this pattern as UUD, and we call such dealing \textit{under-under-down}. We study this dealing and other dealings of period three in the following section.

\subsection{Dealings of period three}
\label{sec:period3}

We consider patterns $P$ that are periodic with period 3. Note that $UUU$ is not defined as we want cards to be dealt at some point, and that $DDD = D$, which we describe below.

\begin{example}
\label{ex:D}
If $P = D$, the cards are always dealt in order --- so, $T_{n, k} = k$, and $J_{N,k} = k$. From this, we get that the elimination order of the first person is always $1$, and the freed person is always $n$.
\end{example}

\textbf{The number of moves.}
The number of moves for each of these patterns is shown in Table~\ref{tab:NoFMoversPeriod3}.

\begin{table}[ht!]
\centering
\renewcommand{\arraystretch}{1.5}
\begin{tabular}{|c|c|c|c|c|c|c|}
\hline
Pattern & $UUD$ & $UDU$ & $DUU$ & $UDD$ & $DUD$ & $DDU$ \\
\hline
Number of moves & $3N$ & $3N-1$ & $3N-2$ & $\ceil{\frac{3N}{2}}$ & $\floor{\frac{3N}{2}}$ & $\ceil{\frac{3N}{2}}-1$ \\
\hline
\end{tabular}
\caption{Number of moves for patterns of period 3.}
\label{tab:NoFMoversPeriod3}
\end{table}

\textbf{The dealing triangles.} The dealings of period $3$ involve some nice slanted diagonals, letting us express the whole triangle in terms of the first three columns.

For the patterns with only one $D$, we have the following.

\begin{proposition}
\label{prop:uud-dealing-triangle}
    If $P$ is any of $DUU$, $UDU$, or $UUD$, for any $k > 3$ we have 
    \[T^P_{N,k} = T^P_{N-1, k-3} + 1.\]
    We also start with $T^P_{1,1} = 1$. For the first 3 columns, we have
\begin{align*}
 T^{DUU}_{N,1} &= 1 & T^{DUU}_{N,2} &= T^{DUU}_{N-1, N-2} + 1 & T^{DUU}_{N,3} &= T^{DUU}_{N-1, N-1}+1 \\ 
 T^{UDU}_{N,1} &= T^{UDU}_{N-1, N-2} + 1 & T^{UDU}_{N,2} &= 1 & T^{UDU}_{N,3} &= T^{UDU}_{N-1, N-1} + 1 \\ 
 T^{UUD}_{N,1} &= T^{UUD}_{N-1, N-2} + 1 & T^{UUD}_{N,2} &= T^{UUD}_{N-1, N-1} + 1 & T^{UUD}_{N,3} &= 1.
\end{align*}
\end{proposition}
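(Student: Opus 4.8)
The plan is to derive everything from the two general periodic results already established, namely Theorem~\ref{thm:periodic-dealing-triangle-later-cols} and Theorem~\ref{thm:periodic-dealing-triangle-first-cols}, specialized to period $p = 3$. For each of the three patterns $DUU$, $UDU$, $UUD$, the periodic portion $P_3$ contains exactly one $D$, so $|P_3|_D = 1$ and $|P_3|_U = 2$. Plugging $|P_3|_D = 1$ into Theorem~\ref{thm:periodic-dealing-triangle-later-cols} immediately gives $T^P_{N,k} = T^P_{N-1,k-3} + 1$ for all $k > 3$, which is the common recursion stated first; this part requires essentially no work beyond citing the theorem.

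For the first three columns, I would invoke Theorem~\ref{thm:periodic-dealing-triangle-first-cols} with $p = 3$, $|P_3|_D = 1$. That theorem says: if $P_k = D$ then $T_{N,k} = |P_k|_D$, and if $P_k = U$ then $T_{N,k} = T_{N-1,\,N - 3 + |P_k|_U} + 1$. So for each pattern I just need to read off, for each $k \in \{1,2,3\}$, whether $P_k$ is $D$ or $U$, and in the latter case compute $|P_k|_U$ (the number of $U$'s among the first $k$ letters). For $DUU$: $P_1 = D$ gives $T_{N,1} = 1$; $P_2 = U$ with $|P_2|_U = 1$ gives $T_{N,2} = T_{N-1,N-2}+1$; $P_3 = U$ with $|P_3|_U = 2$ gives $T_{N,3} = T_{N-1,N-1}+1$. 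For $UDU$: $P_1 = U$ with $|P_1|_U = 1$ gives $T_{N,1} = T_{N-1,N-2}+1$; $P_2 = D$ gives $T_{N,2} = 1$; $P_3 = U$ with $|P_3|_U = 2$ gives $T_{N,3} = T_{N-1,N-1}+1$. For $UUD$: $P_1 = U$, $|P_1|_U = 1$, so $T_{N,1} = T_{N-1,N-2}+1$; $P_2 = U$, $|P_2|_U = 2$, so $T_{N,2} = T_{N-1,N-1}+1$; $P_3 = D$ gives $T_{N,3} = 1$. These match the claimed formulas exactly.

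The only genuinely separate ingredient is the base case $T^P_{1,1} = 1$, which is trivial: a one-card deck is dealt in order regardless of pattern (as noted in Example~\ref{ex:D}). I should also remark that Theorem~\ref{thm:periodic-dealing-triangle-first-cols} only applies for $N \geq p = 3$, but since the first three rows are finite objects, the recursions together with those base rows determine the whole triangle; the paper's preceding remark already licenses this, so I will simply point to it rather than spelling out rows $2$ and $3$. The main (and essentially only) obstacle is bookkeeping: making sure the index $N - 3 + |P_k|_U$ from Theorem~\ref{thm:periodic-dealing-triangle-first-cols} is transcribed correctly as $N-2$ when $|P_k|_U = 1$ and as $N-1$ when $|P_k|_U = 2$. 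There is no conceptual difficulty — the proof is a direct specialization.

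\begin{proof}
The recursion $T^P_{N,k} = T^P_{N-1,k-3}+1$ for $k > 3$ is exactly Theorem~\ref{thm:periodic-dealing-triangle-later-cols} applied with period $p = 3$, since each of $DUU$, $UDU$, $UUD$ has $|P_3|_D = 1$. The base case $T^P_{1,1} = 1$ holds because a single card is always dealt in order. For the first three columns, apply Theorem~\ref{thm:periodic-dealing-triangle-first-cols} with $p = 3$ and $|P_3|_D = 1$; for $N \geq 3$ and $k \leq 3$ it gives $T^P_{N,k} = |P_k|_D$ when $P_k = D$, and $T^P_{N,k} = T^P_{N-1,\,N-3+|P_k|_U}+1$ when $P_k = U$. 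In each pattern we read off $P_k$ and, when $P_k = U$, the count $|P_k|_U$ of $U$'s among the first $k$ letters: for $DUU$ we get $T^{DUU}_{N,1} = |P_1|_D = 1$, $T^{DUU}_{N,2} = T^{DUU}_{N-1,N-2}+1$ (as $|P_2|_U = 1$), and $T^{DUU}_{N,3} = T^{DUU}_{N-1,N-1}+1$ (as $|P_3|_U = 2$); for $UDU$ we get $T^{UDU}_{N,1} = T^{UDU}_{N-1,N-2}+1$ (as $|P_1|_U = 1$), $T^{UDU}_{N,2} = |P_2|_D = 1$, and $T^{UDU}_{N,3} = T^{UDU}_{N-1,N-1}+1$ (as $|P_3|_U = 2$); and for $UUD$ we get $T^{UUD}_{N,1} = T^{UUD}_{N-1,N-2}+1$ (as $|P_1|_U = 1$), $T^{UUD}_{N,2} = T^{UUD}_{N-1,N-1}+1$ (as $|P_2|_U = 2$), and $T^{UUD}_{N,3} = |P_3|_D = 1$. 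Since the first three rows of each triangle contain only finitely many entries, these recursions together with those rows determine the triangle, as noted before Theorem~\ref{thm:periodic-dealing-triangle-first-cols}.
\end{proof}
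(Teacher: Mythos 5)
Your proof is correct and takes exactly the same route as the paper, which simply cites Theorems~\ref{thm:periodic-dealing-triangle-later-cols} and~\ref{thm:periodic-dealing-triangle-first-cols}; you have merely carried out the specialization to $p=3$, $|P_3|_D=1$ explicitly, and all the index bookkeeping ($N-3+|P_k|_U$ becoming $N-2$ or $N-1$) checks out.
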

\begin{proof}
    These follow from Theorems \ref{thm:periodic-dealing-triangle-later-cols} and \ref{thm:periodic-dealing-triangle-first-cols}.
\end{proof}

For patterns with two $D$s, we have the following.
\begin{proposition}
    If $P$ is any of $DDU$, $DUD$, or $UDD$, for any $k > 3$ we have 
    \[T^P_{N,k} = T^P_{N-2, k-3} + 2.\]
    We also start with $T^P_{1,1} = 1$. For the first 3 columns, we have 
\begin{align*}
 T^{UDD}_{N, 1} &= T^{UDD}_{N-2, N-2}+2 & T^{UDD}_{N,2} &= 1 & T^{UDD}_{N,3} &= 2\\
 T^{DUD}_{N,1} &= 1 & T^{DUD}_{N,2} &= T^{DUD}_{N-2, N-2}+2 & T^{DUD}_{N,3} &= 2 \\ 
 T^{DDU}_{N,1} &= 1 & T^{DDU}_{N,2} &= 2 & T^{DDU}_{N,3} &= T^{DDU}_{N-2, N-2}+2.
\end{align*}
\end{proposition}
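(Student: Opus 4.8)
The plan is to derive every formula in the statement as an immediate specialization of Theorem~\ref{thm:periodic-dealing-triangle-later-cols} and Theorem~\ref{thm:periodic-dealing-triangle-first-cols}, in exactly the same way that Proposition~\ref{prop:uud-dealing-triangle} was obtained for the one-$D$ patterns. The only data that changes is the period-level bookkeeping: each of $DDU$, $DUD$, $UDD$ is periodic with period $p = 3$, and each has $|P_3|_D = 2$ and $|P_3|_U = 1$.

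First, for the slanted-diagonal recursion in the range $k > 3$: Theorem~\ref{thm:periodic-dealing-triangle-later-cols} gives $T^P_{N,k} = T^P_{N - |P_p|_D,\, k - p} + |P_p|_D$, and substituting $p = 3$, $|P_3|_D = 2$ yields $T^P_{N,k} = T^P_{N-2,\, k-3} + 2$ for all three patterns at once. The base case $T^P_{1,1} = 1$ is immediate, since the single card of a one-card deck is dealt first regardless of the pattern.

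Next, for the first three columns I would apply Theorem~\ref{thm:periodic-dealing-triangle-first-cols} position by position, choosing the branch according to whether $P_k = D$ or $P_k = U$. When $P_k = D$, the theorem gives the constant value $T^P_{N,k} = |P_k|_D$; this produces $T^{UDD}_{N,2} = 1$, $T^{UDD}_{N,3} = 2$, $T^{DUD}_{N,1} = 1$, $T^{DUD}_{N,3} = 2$, $T^{DDU}_{N,1} = 1$, and $T^{DDU}_{N,2} = 2$. When $P_k = U$, the theorem gives $T^P_{N,k} = T^P_{N - |P_3|_D,\, N - 3 + |P_k|_U} + |P_3|_D$; since the unique $U$ in each period has $|P_k|_U = 1$, the index simplifies to $N - 3 + 1 = N-2$, so $T^P_{N,k} = T^P_{N-2,\, N-2} + 2$, accounting for the remaining entries $T^{UDD}_{N,1}$, $T^{DUD}_{N,2}$, and $T^{DDU}_{N,3}$.

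There is essentially no obstacle: the proposition is pure bookkeeping, amounting to identifying for each of the three patterns which of the three period positions carries the $U$ and then reading off the appropriate branch of Theorem~\ref{thm:periodic-dealing-triangle-first-cols}. The one point worth a moment's care is the index arithmetic $N - p + |P_k|_U = N-2$ in the $U$-branch, which must be checked to land exactly on the diagonal entry $T^P_{N-2,N-2}$; but this is uniform across all three patterns and requires no case analysis beyond locating the $U$. Thus the proof is a one-line appeal to the two periodic-triangle theorems with $p=3$ and $|P_3|_D = 2$, together with the trivial base case.
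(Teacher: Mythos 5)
Your proposal is correct and matches the paper's proof, which likewise derives everything from Theorems~\ref{thm:periodic-dealing-triangle-later-cols} and~\ref{thm:periodic-dealing-triangle-first-cols} with $p=3$ and $|P_3|_D=2$; you have simply made the bookkeeping explicit where the paper leaves it as a one-line citation.
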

\begin{proof}
    These follow from Theorems \ref{thm:periodic-dealing-triangle-later-cols} and \ref{thm:periodic-dealing-triangle-first-cols}.
\end{proof}

Observe that these formulas are shifts of each other --- this can be derived from our formulas for appending $U$ and $D$. Table~\ref{tab:period3Dealingtriangles} shows the dealing triangles for length-$3$ patterns.

\begin{center}
\begin{table}[h]
\begin{tabular}{ccc}
\begin{subtable}{0.3\textwidth}
\centering
\begin{tabular}{cccccc}
1 &   &   &   &   &   \\
1 & 2 &   &   &   &   \\
1 & 2 & 3 &   &   &   \\
1 & 3 & 4 & 2 &   &   \\
1 & 5 & 3 & 2 & 4 &   \\
1 & 3 & 5 & 2 & 6 & 4 
\end{tabular}
\caption{$DUU$}
\end{subtable} & 
\begin{subtable}{0.3\textwidth}
\centering
\begin{tabular}{cccccc}
1 &   &   &   &   &   \\
2 & 1 &   &   &   &   \\
3 & 1 & 2 &   &   &   \\
2 & 1 & 3 & 4 &   &   \\
4 & 1 & 5 & 3 & 2 &   \\
4 & 1 & 3 & 5 & 2 & 6 
\end{tabular}
\caption{$UDU$}
\end{subtable} & 
\begin{subtable}{0.3\textwidth}
\centering
\begin{tabular}{cccccc}
1 &   &   &   &   &   \\
1 & 2 &   &   &   &   \\
2 & 3 & 1 &   &   &   \\
4 & 2 & 1 & 3 &   &   \\
2 & 4 & 1 & 5 & 3 &   \\
6 & 4 & 1 & 3 & 5 & 2 
\end{tabular}
\caption{$UUD$}
\end{subtable} \\
\begin{subtable}{0.3\textwidth}
\centering
\begin{tabular}{cccccc}
1 &   &   &   &   &   \\
1 & 2 &   &   &   &   \\
1 & 2 & 3 &   &   &   \\
1 & 2 & 4 & 3 &   &   \\
1 & 2 & 5 & 3 & 4 &   \\
1 & 2 & 5 & 3 & 4 & 6 
\end{tabular}
\caption{$DDU$}
\end{subtable} & 
\begin{subtable}{0.3\textwidth}
\centering
\begin{tabular}{cccccc}
1 &   &   &   &   &   \\
1 & 2 &   &   &   &   \\
1 & 3 & 2 &   &   &   \\
1 & 4 & 2 & 3 &   &   \\
1 & 4 & 2 & 3 & 5 &   \\
1 & 5 & 2 & 3 & 6 & 4 
\end{tabular}
\caption{$DUD$}
\end{subtable} & 
\begin{subtable}{0.3\textwidth}
\centering
\begin{tabular}{cccccc}
1 &   &   &   &   &   \\
2 & 1 &   &   &   &   \\
3 & 1 & 2 &   &   &   \\
3 & 1 & 2 & 4 &   &   \\
4 & 1 & 2 & 5 & 3 &   \\
6 & 1 & 2 & 5 & 3 & 4 
\end{tabular}
\caption{$UDD$}
\end{subtable}
\end{tabular}
\caption{Dealing triangle $T^P$ for patterns of period $3$}
\label{tab:period3Dealingtriangles}
\end{table}
\end{center}

\textbf{The Josephus triangles.} Table~\ref{tab:period3Dealingtriangles-inv} shows the Josephus triangles for length-3 patterns.

\begin{center}
\begin{table}[H]
\centering
\begin{tabular}{ccc}
\begin{subtable}{0.3\textwidth}
\centering
\begin{tabular}{cccccc}
1 &   &   &   &   &   \\
1 & 2 &   &   &   &   \\
1 & 2 & 3 &   &   &   \\
1 & 4 & 2 & 3 &   &   \\
1 & 4 & 3 & 5 & 2 &   \\
1 & 4 & 2 & 6 & 3 & 5
\end{tabular}
\caption{$DUU$}
\end{subtable} & 
\begin{subtable}{0.3\textwidth}
\centering
\begin{tabular}{cccccc}
1 &   &   &   &   &   \\
2 & 1 &   &   &   &   \\
2 & 3 & 1 &   &   &   \\
2 & 1 & 3 & 4 &   &   \\
2 & 5 & 4 & 1 & 3 &   \\
2 & 5 & 3 & 1 & 4 & 6 
\end{tabular}
\caption{$UDU$}
\end{subtable} & 
\begin{subtable}{0.3\textwidth}
\centering
\begin{tabular}{cccccc}
1 &   &   &   &   &   \\
1 & 2 &   &   &   &   \\
3 & 1 & 2 &   &   &   \\
3 & 2 & 4 & 1 &   &   \\
3 & 1 & 5 & 2 & 4 &   \\
3 & 6 & 4 & 2 & 5 & 1 
\end{tabular}
\caption{$UUD$}
\end{subtable} \\[1em]
\begin{subtable}{0.3\textwidth}
\centering
\begin{tabular}{cccccc}
1 &   &   &   &   &   \\
1 & 2 &   &   &   &   \\
1 & 2 & 3 &   &   &   \\
1 & 2 & 4 & 3 &   &   \\
1 & 2 & 4 & 5 & 3 &   \\
1 & 2 & 4 & 5 & 3 & 6 
\end{tabular}
\caption{$DDU$}
\end{subtable} & 
\begin{subtable}{0.3\textwidth}
\centering
\begin{tabular}{cccccc}
1 &   &   &   &   &   \\
1 & 2 &   &   &   &   \\
1 & 3 & 2 &   &   &   \\
1 & 3 & 4 & 2 &   &   \\
1 & 3 & 4 & 2 & 5 &   \\
1 & 3 & 4 & 6 & 2 & 5 
\end{tabular}
\caption{$DUD$}
\end{subtable} & 
\begin{subtable}{0.3\textwidth}
\centering
\begin{tabular}{cccccc}
1 &   &   &   &   &   \\
2 & 1 &   &   &   &   \\
2 & 3 & 1 &   &   &   \\
2 & 3 & 1 & 4 &   &   \\
2 & 3 & 5 & 1 & 4 &   \\
2 & 3 & 5 & 6 & 4 & 1 
\end{tabular}
\caption{$UDD$}
\end{subtable}
\end{tabular}
\caption{Josephus triangle $J^P$ for patterns of period $3$}
\label{tab:period3Dealingtriangles-inv}
\end{table}
\end{center}

\subsection{\texorpdfstring{$UD$}{UD} example}

We now delve deeper into our main example: $P = UD$. We start with the elimination order of the first person.

\textbf{Elimination order of the first person.} Consider the case of $E^{UD}$, which is $T^{UD}_{N,1}$. The corresponding sequence, A225381, entry has a recursive definition of this sequence: $T_{N,1} = \frac{N+1}{2}$ for odd $N$ and $T_{N,1} = T_{\frac{N}{2},1} + \frac{N}{2}$ for even $N$.

We suggest another description of this sequence in terms of the binary representation of $N$. Recall that the $2$-adic valuation of an integer $n$, denoted $\nu_2(n)$, is the exponent of the highest power of 2 that divides $n$.

\begin{proposition}
\label{prop:firstcolumn}
We can express the term $T_{N,1}$ as
\[T_{N,1} = N - \left\lfloor \frac{N}{2^{\nu _{2}(N)+1}} \right\rfloor = N - \frac{N}{2^{\nu _{2}(N)+1}} + \frac{1}{2}.\]
\end{proposition}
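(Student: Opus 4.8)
The plan is to verify the closed form against the known recursion for $T_{N,1}$, namely $T_{N,1} = \tfrac{N+1}{2}$ for odd $N$ and $T_{N,1} = T_{N/2,1} + \tfrac{N}{2}$ for even $N$, by induction on $N$. First I would dispose of the odd case: if $N$ is odd then $\nu_2(N) = 0$, so $2^{\nu_2(N)+1} = 2$ and $\lfloor N/2 \rfloor = \tfrac{N-1}{2}$, giving $N - \tfrac{N-1}{2} = \tfrac{N+1}{2}$, which matches. (Simultaneously this checks the second displayed expression, since for odd $N$ we have $N/2^{\nu_2(N)+1} = N/2$, and $N - N/2 + 1/2 = (N+1)/2$.) For the even case, write $N = 2m$, so $\nu_2(N) = \nu_2(m) + 1$ and hence $2^{\nu_2(N)+1} = 2\cdot 2^{\nu_2(m)+1}$, which yields $\lfloor N/2^{\nu_2(N)+1}\rfloor = \lfloor m/2^{\nu_2(m)+1}\rfloor$. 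By the induction hypothesis $T_{m,1} = m - \lfloor m/2^{\nu_2(m)+1}\rfloor$, so
\[
T_{N,1} = T_{m,1} + m = 2m - \left\lfloor \frac{m}{2^{\nu_2(m)+1}}\right\rfloor = N - \left\lfloor \frac{N}{2^{\nu_2(N)+1}}\right\rfloor,
\]
completing the induction for the first expression.

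It remains to justify the second equality, $N - \lfloor N/2^{\nu_2(N)+1}\rfloor = N - N/2^{\nu_2(N)+1} + \tfrac12$, i.e.\ that $\lfloor N/2^{\nu_2(N)+1}\rfloor = N/2^{\nu_2(N)+1} - \tfrac12$. Write $N = 2^{\nu_2(N)} \cdot q$ with $q$ odd; then $N/2^{\nu_2(N)+1} = q/2$, and since $q$ is odd, $q/2 = \lfloor q/2\rfloor + \tfrac12$, so $\lfloor q/2 \rfloor = q/2 - \tfrac12$, which is exactly what is needed. This also re-derives the odd-$N$ base case as the special instance $\nu_2(N)=0$.

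I do not anticipate a serious obstacle here; the only point requiring a little care is bookkeeping the $2$-adic valuation under $N \mapsto N/2$ (the identity $\nu_2(2m) = \nu_2(m)+1$), and making sure the induction is anchored correctly — the base case $N=1$ is covered by the odd case, and $T_{1,1}=1$ matches $1 - \lfloor 1/2\rfloor = 1$. One could alternatively bypass the recursion entirely and argue directly from the binary-rotation description of the sequence mentioned in the $UD$ examples, but piggybacking on the OEIS recursion for A225381 (which the paper has just quoted) makes for the cleanest write-up.
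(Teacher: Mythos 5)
Your proposal is correct and takes essentially the same route as the paper: both arguments reduce $N=2^a m$ to its odd part via the doubling relation $T_{2n,1}=T_{n,1}+n$ and anchor at $T_{m,1}=\tfrac{m+1}{2}$ for odd $m$. The only real difference is presentational — the paper derives the doubling relation from Proposition~\ref{prop:trianglebuilding} (via $T_{2n,1}=T_{2n-1,2n-1}+1$ and the slanted diagonals) and then telescopes the resulting sum, whereas you import both halves of the recursion from the quoted A225381 entry and verify the closed form by induction, which is equally valid given that the paper states that recursion immediately before the proposition.
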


\begin{proof}
We want to show that 
\[T_{2n,1}=T_{n,1}+n.\]
First,  from Proposition~\ref{prop:trianglebuilding}, we have $T_{2n,1}=T_{2n-1,2n-1}+1$. We now use the fact from the same Proposition~\ref{prop:trianglebuilding} that $T_{N,k}=T_{N-1,k-2}+1$; applying it $n-1$ times we get that $T_{2n-1,2n-1}=T_{n,1}+n-1$. Thus, $T_{2n,1}=T_{n,1}+n$.

Suppose $N = 2^am$, where $m$ is odd. Then, using the property above $a$ times, we get
\[T_{2^am,1}=T_{m,1}+ 2^{a-1}m + 2^{a-2}m + \cdots + m = \frac{m+1}{2} + 2^am- m = N - \frac{m}{2} + \frac{1}{2}.\]
What is left to notice is that
\[\frac{m}{2} = \frac{N}{2^{\nu _{2}(N)+1}},\]
concluding the proof.
\end{proof}

In other words, to calculate the order of elimination of the first person, we express $N$ in binary, remove all the trailing zeros and the last 1, and subtract the resulting number from $N$.

\begin{example}
Consider $n=14$, which in binary is 1110. The trailing zeros with the last one form string 10; after removing it, we get string 11, which corresponds to 3 in binary. Subtracting 3 from 14, we get $T_{14,1}  = 11$. As another example, the reader can check that $T_{2^n,1} = 2^n$.
\end{example}

The following property of the first column is easy to notice and derive from Proposition~\ref{prop:firstcolumn}. This property plays an important role in the magic trick ``The Love Ritual''.

\begin{corollary}
Consider a subsequence of numbers divisible by $2^m$ and not $2^{m+1}$: $2^m$, $3\cdot 2^m$, $5\cdot 2^m$, and do on. The values of the first column with such indices form an arithmetic progression starting with $2^m$ and the difference $2^{m+1}-1$.
\end{corollary}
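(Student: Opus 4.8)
The plan is to derive this directly from the closed-form expression in Proposition~\ref{prop:firstcolumn}, namely $T_{N,1} = N - N/2^{\nu_2(N)+1} + 1/2$. The claim concerns indices of the form $N = (2j+1)2^m$ for $j = 0, 1, 2, \dots$, that is, exactly the integers with $\nu_2(N) = m$.

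First I would observe that for any such $N$, we have $\nu_2(N) = m$, so the formula gives
\[T_{(2j+1)2^m,\,1} = (2j+1)2^m - \frac{(2j+1)2^m}{2^{m+1}} + \frac{1}{2} = (2j+1)2^m - \frac{2j+1}{2} + \frac{1}{2} = (2j+1)2^m - j.\]
Then I would rewrite this as $(2j+1)2^m - j = 2^m + j(2^{m+1} - 1)$. Setting $j = 0$ gives the first term $2^m$, and increasing $j$ by $1$ increases the value by $2^{m+1} - 1$, which is precisely the assertion that these values form an arithmetic progression with first term $2^m$ and common difference $2^{m+1} - 1$.

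This is essentially a one-line computation once Proposition~\ref{prop:firstcolumn} is in hand, so there is no real obstacle; the only thing to be careful about is confirming that the subsequence of indices ``divisible by $2^m$ but not $2^{m+1}$'' is exactly $\{(2j+1)2^m : j \geq 0\}$ and that these are listed in increasing order of $j$, so that ``starting with $2^m$ and difference $2^{m+1}-1$'' is the correct description. I would present the proof as: apply Proposition~\ref{prop:firstcolumn} with $\nu_2(N) = m$, simplify to $(2j+1)2^m - j = 2^m + j(2^{m+1}-1)$, and read off the arithmetic progression.
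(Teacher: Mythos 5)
Your proposal is correct and takes essentially the same approach as the paper: both are direct computations from Proposition~\ref{prop:firstcolumn} specialized to $N=(2j+1)2^m$ (the paper verifies the first term and the consecutive difference separately, while you derive the single closed form $2^m + j(2^{m+1}-1)$ and read both off at once). Your algebra checks out.
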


\begin{proof}
By Proposition~\ref{prop:firstcolumn}, we have $T_{2^m,1} = 2^m$. The difference between consecutive terms in the subsequence is
\[T_{(2j+1)2^m,1} - T_{(2j-1)2^m,1} = (2j+1)2^m - (2j-1)2^m - \frac{(2j+1)2^m}{2^{m+1}} + \frac{(2j-1)2^m}{2^{m+1}} = 2^{m+1} - 1.\qedhere\]
\end{proof}

\textbf{The dealing triangle.} As we mentioned before, the formula for the first column allows us to write the formula for the whole dealing triangle.

\begin{theorem}
We have
\[T_{N,2j+1} = N - \frac{N-j}{2^{\nu_2(N-j)+1}} + \frac{1}{2}  \quad \textrm{ and } \quad T_{N,2j} =j.\]
\end{theorem}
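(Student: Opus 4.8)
The plan is to deduce both formulas from Corollary~\ref{cor:tablereduction} together with Proposition~\ref{prop:firstcolumn}. Recall that Corollary~\ref{cor:tablereduction} states $T_{N,2k} = k$ and $T_{N,2k+1} = T_{N-k,1} + k$ for the $UD$ triangle. The even-column case is then immediate: writing the column index as $2j$, Corollary~\ref{cor:tablereduction} gives $T_{N,2j} = j$, which is exactly the claimed formula. So the only real content is the odd-column case, and the entire job there is to substitute the closed form for the first column into $T_{N,2j+1} = T_{N-j,1} + j$.

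For the odd case, I would start from $T_{N,2j+1} = T_{N-j,1} + j$ (valid for $j \geq 0$, with the $j=0$ case reducing to the first-column formula itself). Then I apply Proposition~\ref{prop:firstcolumn} with argument $N-j$ in place of $N$, obtaining
\[
T_{N-j,1} = (N-j) - \frac{N-j}{2^{\nu_2(N-j)+1}} + \frac{1}{2}.
\]
Adding $j$ to both sides cancels the $-j$ in the first term and leaves
\[
T_{N,2j+1} = N - \frac{N-j}{2^{\nu_2(N-j)+1}} + \frac{1}{2},
\]
which is precisely the asserted expression. The one thing worth a sentence of care is the range of validity and the edge cases: the index $2j+1$ must satisfy $2j+1 \le N$, so $j \le (N-1)/2$, which keeps $N-j \ge j+1 \ge 1$ so that $\nu_2(N-j)$ is well defined and Proposition~\ref{prop:firstcolumn} applies; and when $N$ is odd and $2j+1 = N$ (so $j = (N-1)/2$) the argument $N-j = (N+1)/2$ is still a legitimate positive integer, so nothing breaks.

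I do not anticipate a genuine obstacle here — the proposition is essentially a corollary of two earlier results, and the proof is a short substitution. The only mild subtlety is making sure the reader sees why the reduction formula $T_{N,2j+1} = T_{N-j,1}+j$ holds; since that is already recorded in Corollary~\ref{cor:tablereduction}, I would simply cite it rather than re-derive it. Thus the write-up would be: (1) restate the even-column identity from Corollary~\ref{cor:tablereduction}; (2) restate the odd-column reduction from the same corollary; (3) substitute Proposition~\ref{prop:firstcolumn} and simplify; (4) remark on the range of $j$.
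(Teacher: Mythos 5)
Your proposal is correct and matches the paper's proof exactly: both cases are read off from Corollary~\ref{cor:tablereduction}, and the odd-column formula follows by substituting Proposition~\ref{prop:firstcolumn} into $T_{N,2j+1}=T_{N-j,1}+j$ and simplifying. Nothing further is needed.
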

\begin{proof}
Corollary~\ref{cor:tablereduction} gives us the second part $T_{N,2j} =j$; it also gives us
\[T_{N,2k+1} = T_{N-k,1} + k.\]
Now using Proposition~\ref{prop:firstcolumn} we get
\[ T_{N,2k+1} = k + (N-k) - \frac{N-k}{2^{\nu_2(N-k)+1}} + \frac{1}{2} = N - \frac{N-k}{2^{\nu_2(N-k)+1}} + \frac{1}{2}.\qedhere\]
\end{proof}

In other words, the formula for $T_{N,k}$ if $k$ is odd using binary representation is the following. Take the binary representation of $N-\frac{k-1}2$ chopping of the trailing 0s and the last 1, then subtract the resulting number from $N$.

\textbf{Slanted anti-diagonals.} We also have some slanted anti-diagonals, where the same value is repeated along a diagonal moving $1$ up and $2$ to the right each step. That is, we have the following:
\begin{theorem}\label{thm:anti-diagonals}
    For the dealing triangle $T=T^{UD}$, and odd $k > 2$, if either
    \begin{itemize}
        \item $N$ is odd and $k \equiv 1 \pmod 4$, or
        \item $N$ is even and $k \equiv 3 \pmod 4$, 
    \end{itemize}
    then we have $T_{N, k}= T_{N+1, k-2}$.
\end{theorem}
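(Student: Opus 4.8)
The plan is to use the closed-form expression for $T_{N,k}$ with $k$ odd, namely $T_{N,2j+1} = N - \frac{N-j}{2^{\nu_2(N-j)+1}} + \frac12$, and simply verify the claimed equality of two entries directly. Write $k = 2j+1$ with $j \geq 1$ (so that $k > 2$). Then $T_{N,k} = T_{N,2j+1}$ depends on $N$ through the term $N - \frac{N-j}{2^{\nu_2(N-j)+1}}$, while $T_{N+1,k-2} = T_{N+1, 2(j-1)+1}$ equals $(N+1) - \frac{(N+1)-(j-1)}{2^{\nu_2((N+1)-(j-1))+1}} = (N+1) - \frac{N-j+2}{2^{\nu_2(N-j+2)+1}}$. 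So the identity $T_{N,k} = T_{N+1,k-2}$ is equivalent to
\[
N - \frac{N-j}{2^{\nu_2(N-j)+1}} = (N+1) - \frac{N-j+2}{2^{\nu_2(N-j+2)+1}},
\]
i.e. to
\[
\frac{N-j+2}{2^{\nu_2(N-j+2)+1}} - \frac{N-j}{2^{\nu_2(N-j)+1}} = 1.
\]

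First I would set $a = N - j$ and reduce the whole problem to an elementary $2$-adic fact: for which integers $a$ does $\frac{a+2}{2^{\nu_2(a+2)+1}} - \frac{a}{2^{\nu_2(a)+1}} = 1$ hold? The key observation is that $a$ and $a+2$ have the same parity, so there are two cases. If $a$ is odd, then $\nu_2(a) = \nu_2(a+2) = 0$, and the left side is $\frac{a+2}{2} - \frac{a}{2} = 1$ automatically — so the identity holds for every odd $a$. If $a$ is even, write $a = 2b$; then $a+2 = 2(b+1)$, and exactly one of $b$, $b+1$ is odd while the other may be even, so the two $2$-adic valuations differ, and a short computation shows the difference is never equal to $1$ (one checks, e.g., that $\frac{a}{2^{\nu_2(a)+1}}$ and $\frac{a+2}{2^{\nu_2(a+2)+1}}$ are both odd when $a$ is even, hence their difference is even, never $1$). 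Thus the identity $T_{N,k} = T_{N+1,k-2}$ holds precisely when $a = N - j$ is odd.

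Finally I would translate the parity condition on $N - j$ back into the stated conditions on $N$ and $k$. Since $k = 2j+1$, we have $j = \frac{k-1}{2}$, so $N - j = N - \frac{k-1}{2}$; this is odd iff $2N - (k-1) \equiv 2 \pmod 4$, i.e. $2N \equiv k+1 \pmod 4$. Splitting on the parity of $N$: if $N$ is odd then $2N \equiv 2 \pmod 4$, forcing $k + 1 \equiv 2$, i.e. $k \equiv 1 \pmod 4$; if $N$ is even then $2N \equiv 0 \pmod 4$, forcing $k+1 \equiv 0$, i.e. $k \equiv 3 \pmod 4$. These are exactly the two bulleted cases, completing the proof.

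I do not expect a serious obstacle here: the hardest part is just being careful with the even-$a$ case to confirm the identity genuinely fails there (so that the stated conditions are not merely sufficient but capture the right diagonals), and making sure the edge constraint $k > 2$ (equivalently $j \geq 1$) keeps the index $k - 2 \geq 1$ valid so that $T_{N+1,k-2}$ is actually an entry of the triangle. A small sanity check against Table~\ref{tab:UDdealing} — e.g. $T_{5,5} = 4 = T_{6,3}$ with $N=5$ odd, $k=5\equiv 1 \pmod 4$, and $T_{6,5} = 6 = T_{7,3}$ with... (here one verifies the recorded values match) — confirms the bookkeeping.
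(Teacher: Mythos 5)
Your proof of the stated implication is correct, but it takes a genuinely different route from the paper. The paper argues combinatorially: it tracks one full dealing round, observes that for even $N$ the second round deals the cards with index $\equiv 3 \pmod 4$ in order (giving $T_{N,k} = \tfrac{N}{2} + \lceil k/4\rceil$), and that for odd $N$ the leftover $DU$-dealing deals the cards with index $\equiv 1 \pmod 4$ in order (giving $T_{N,k} = \tfrac{N-1}{2} + \lceil k/4\rceil$), then checks the equality directly from these restricted closed forms. You instead invoke the global closed form $T_{N,2j+1} = N - \tfrac{N-j}{2^{\nu_2(N-j)+1}} + \tfrac12$ (which the paper proves just before this theorem, so there is no circularity) and reduce everything to the identity $\tfrac{a+2}{2^{\nu_2(a+2)+1}} - \tfrac{a}{2^{\nu_2(a)+1}} = 1$ for $a = N - \tfrac{k-1}{2}$, which is immediate when $a$ is odd; your translation of ``$a$ odd'' into the two bulleted parity cases is also correct. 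Your route is shorter given the closed form, while the paper's is self-contained and explains \emph{why} the anti-diagonals appear (they come from the second dealing round); both are legitimate.

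However, your side-claim about the even-$a$ case is wrong, and you should not present the bulleted conditions as necessary. Writing $a = 2^{\nu}m$ with $m$ odd, the quantity $\tfrac{a}{2^{\nu_2(a)+1}} = \tfrac{m}{2}$ is a half-odd-integer, not an odd integer, so the parenthetical ``both odd, hence their difference is even, never $1$'' does not hold. For even $a$ the difference is $\tfrac{m'-m}{2}$ where $m, m'$ are the odd parts of $a+2$ and $a$, and this \emph{can} equal $1$: for $a = 4$ one gets $\tfrac{6}{4} - \tfrac{4}{8} = 1$, corresponding to, e.g., $T_{5,3} = T_{6,1} = 5$ and $T_{6,5} = T_{7,3} = 6$ --- the latter being precisely one of your own ``sanity checks,'' even though $N=6$ is even and $k = 5 \equiv 1 \pmod 4$ falls outside the theorem's hypotheses. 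So the identity $T_{N,k} = T_{N+1,k-2}$ holds in some cases not covered by the theorem, and your characterization ``precisely when $N - j$ is odd'' is false. This does not damage the proof of the theorem as stated (it is only an implication, and your chain ``bulleted conditions $\Rightarrow a$ odd $\Rightarrow$ identity'' is sound), but the necessity discussion should be deleted or corrected; the paper itself only remarks that the parity constraints cannot simply be dropped, citing $T_{9,7} = 8 \neq 10 = T_{10,5}$.
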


\begin{proof}
    These anti-diagonals come from shifts of the even-numbered 
    columns. If we have an even number of cards, then after one dealing round ($N/2$ dealt cards), we are left to perform $UD$ dealing on cards $1,3,5,\dots, N-1$. Then, in the next dealing round, we will deal in order all cards with value $3 \pmod 4$. So, if $k \equiv 3 \pmod 4$, we will have $T_{N,k} = \frac{N}{2} + \left\lceil \frac{k}{4} \right\rceil$. 

    On the other hand, if we have an odd number of cards, then after one dealing round (consisting of $(N-1)/2$ dealt cards), we are left to perform $DU$ dealing on cards $1,3,5,\dots, N$. This next dealing round will deal in order all cards with value $1 \pmod 4$, so if $k\equiv 1 \pmod 4$, we will have $T_{N,k} = \frac{N-1}{2} + \left\lceil \frac{k}{4} \right\rceil$.

    This means that, for $k \equiv 3 \pmod 4$ and $N$ even, we have
    \[T_{N, k} = \frac{N}{2} +  \left\lceil \frac{k}{4} \right\rceil =  \frac{N}{2} + \left\lceil \frac{k-2}{4} \right\rceil = T_{N + 1, k-2},\]
    and for $N$ odd and $k \equiv 1 \pmod 4$, we have
    \[T_{N, k} = \floor{\frac{N}{2}} + \left\lceil \frac{k}{4} \right\rceil = \floor{\frac{N}{2}} + 1 + \left\lceil \frac{k-2}{4} \right\rceil = T_{N+1, k-2}.\qedhere\]
\end{proof}

For $UD$ dealing, this creates nice anti-diagonals in the triangle where a value is repeated.

Note that the statement is not true if we remove the parity constraints. For example, for $(n,k)=(9,7)$, we have $T_{9,7}=8$ but $T_{10,5}=10$.

\section{Special deck sizes}
\label{sec:specialdecksizes}

Given a pattern $P$, one might additionally be interested in which values of $N$ result in a particular index being freed. The simplest example of this would be considering which values of $N$ result in the first person surviving. For any pattern $P$, we can define a corresponding sequence $S^P$ consisting of all values of $N$ such that $F^P(N) = 1$ --- or, equivalently, such that $E^P(N) = N$.
We note that, if $P$ begins with a $D$, the sequence $S^P$ has only a single term. Indeed, the first person will be the first to be eliminated.

We could also consider the sequence of deck sizes for which the \emph{last} person is freed --- in other words, where $F^P(N) = N$. We denote this sequence $L^P$. This is interesting even for patterns beginning with $D$. The sequence numbers are listed in Table~\ref{tab:SLsequences}.

\begin{proposition}
    Integer $N>1$ belongs to $S^P$ if and only if $|P_N|_U$ belongs to $S^{P'}$, where $P'$ is $P$ with the first $N$ letters removed and $P$ starts with $U$.

    Integer $N>1$ belongs to $L^P$ if and only if either $P_N$ contains no $U$s, or $|P_N|_U$ belongs to $L^{P'}$, where $P'$ is $P$ with the first $N$ letters removed and $P_N = U$.
\end{proposition}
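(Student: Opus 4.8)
The plan is to reduce both claims directly to Theorem~\ref{thm:freedpersonrecursion}, which expresses $F^P(N)$ either as $N$ (when $d_N = N$) or as $u_{F^{P'}(|P_N|_U)}$ otherwise. The only auxiliary facts needed are elementary properties of the increasing sequence $u_1 < u_2 < \cdots$ of positions of the $U$'s in $P$: that $u_j = 1$ holds precisely when $j = 1$ and $P_1 = U$; that among the first $N$ letters there are exactly $|P_N|_U$ occurrences of $U$, so $u_{|P_N|_U} \le N$ with equality exactly when $P_N = U$; and that $F^{P'}(m) \in \{1, \dots, m\}$ since the freed position of an $m$-card deck lies in that range.

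For the statement about $S^P$, fix $N > 1$. Since $F^P(N) = 1 \ne N$, the recursion must land in its second branch, so $d_N \ne N$, the prefix $P_N$ contains at least one $U$ (hence $|P_N|_U \ge 1$ and $F^{P'}(|P_N|_U)$ is well defined), and $F^P(N) = u_{F^{P'}(|P_N|_U)}$. Using $F^{P'}(|P_N|_U) \ge 1$ together with the first auxiliary fact, $u_{F^{P'}(|P_N|_U)} = 1$ is equivalent to $F^{P'}(|P_N|_U) = 1$ and $P_1 = U$. By definition $F^{P'}(m) = 1$ means $m \in S^{P'}$, so this rearranges to: $N \in S^P$ iff $P$ begins with $U$ and $|P_N|_U \in S^{P'}$. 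The converse direction uses the remark that $P_1 = U$ already guarantees $|P_N|_U \ge 1$, so the membership $|P_N|_U \in S^{P'}$ makes sense.

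For the statement about $L^P$, again fix $N > 1$ and split on the two branches. If $d_N = N$ --- equivalently $P_N$ has no $U$'s --- then $F^P(N) = N$, so $N \in L^P$, which is the first alternative of the claim. Otherwise $P_N$ contains a $U$, so $|P_N|_U \ge 1$ and $F^P(N) = u_{F^{P'}(|P_N|_U)}$; since $F^{P'}(|P_N|_U) \le |P_N|_U$, monotonicity of $u$ gives $u_{F^{P'}(|P_N|_U)} \le u_{|P_N|_U} \le N$. Hence $F^P(N) = N$ forces both inequalities to be equalities, i.e.\ $u_{|P_N|_U} = N$ (equivalently $P_N = U$) and $F^{P'}(|P_N|_U) = |P_N|_U$ (equivalently $|P_N|_U \in L^{P'}$); the converse is immediate. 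Combining the two branches gives exactly the stated characterization.

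I do not expect a genuinely hard step: the whole argument is bookkeeping of when a value of $u$ equals $1$ or equals $N$. The point that needs the most care is keeping the recursive calls legitimate --- checking that $|P_N|_U \ge 1$ before invoking $F^{P'}(|P_N|_U)$, and using $F^{P'}(m) \in \{1,\dots,m\}$ so that composing with the monotone sequence $u$ behaves as claimed --- and making sure the ``$P$ begins with $U$'' clause in the $S^P$ case is handled correctly (derived in the forward direction, assumed in the backward direction) rather than as an unexamined hypothesis.
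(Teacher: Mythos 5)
Your proof is correct and takes essentially the same approach as the paper: both recurse by one round of dealing and track whether the first (resp.\ last) card survives that round and where it sits in the leftover deck. The only difference is presentational --- the paper argues this directly in words, while you derive it formally from Theorem~\ref{thm:freedpersonrecursion} via the monotonicity of the sequence $u_1 < u_2 < \cdots$ and the bound $F^{P'}(m) \le m$, which if anything makes the edge cases (when $P_N$ has no $U$'s, and why $P_1 = U$ is forced rather than assumed) more airtight than the original.
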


\begin{proof}
    After a single round of dealing, we are left to perform $P'$ dealing on a deck of $P_N|_U$ cards. So, we deal the first card last if and only if we skip it on the first round of dealing (meaning that $P_1 = U$) and then deal it last among the remaining cards (meaning that $|P_N|_U$ belongs to $S^{P'}$). Similarly, we deal the last card last if and only if we skip it on the first round of dealing (meaning that $P_N = U$) and then deal it last among the remaining cards (making that $|P_N|_U$ belongs to $L^{P'}$) --- with the exception that the last card is also dealt last if $P_N$ consists only of $D$s. 
\end{proof}

For $S^{UDD}$, we have the following recursive formula.
\begin{proposition}\label{prop:UDD-first-person-survives}
    We have
    \[S^{UDD}(2m + 1) = 3^m \quad \textrm{ and } \quad S^{UDD}(2m) = 2 \cdot 3^{m-1}.\]
\end{proposition}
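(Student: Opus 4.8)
The plan is to set up a recursion for $S^{UDD}$ using the first part of the preceding proposition and then solve it explicitly. Recall that $S^{UDD}$ is the set of deck sizes $N$ for which $F^{UDD}(N) = 1$. Since $UDD$ begins with $U$, the preposition tells us that for $N > 1$, $N \in S^{UDD}$ if and only if $|P_N|_U \in S^{P'}$, where $P = UDD$ and $P'$ is $P$ with its first $N$ letters removed. The pattern $UDD$ has period $3$ with one $U$ per period, so $|P_N|_U = \lceil N/3 \rceil$ (more precisely, $|P_N|_U$ equals the number of indices $i \le N$ with $i \equiv 1 \pmod 3$). Also, since $|P_N|_D = N - |P_N|_U$ cards are removed in a round, and the period is $3$, the leftover pattern $P'$ depends only on $N \bmod 3$: when $N \equiv 0 \pmod 3$ we have $P' = UDD$ again, and for the other residues $P'$ is a cyclic shift of $UDD$. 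So first I would work out exactly what $P'$ is in each case, keeping only the residues that actually occur for $N \in S^{UDD}$.

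The key observation to exploit is a \emph{parity} constraint on elements of $S^{UDD}$. By Proposition~\ref{lem:perodic-freed-person-modularity-constraints}, $F^{UDD}(N)$ can never be congruent mod $N$ to an index $i \le 3$ with $P_i = D$; for $P = UDD$ those are $i = 2$ and $i = 3$. For $N \in S^{UDD}$ we need $F^{UDD}(N) = 1$, which is automatically consistent, but we can say more: I would check that $N \in S^{UDD}$ forces $N \not\equiv 2 \pmod 3$, since if $N \equiv 2 \pmod 3$, after one round the surviving pattern begins with $D$ and the previously-first card (now at the front) would be dealt immediately rather than last — unless $N = 1$, handled separately. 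Actually the cleanest route is: for $N > 1$ in $S^{UDD}$, we need $P_1 = U$ (true) and the leftover deck of size $|P_N|_U$ to have its first card freed under $P'$. If $N \equiv 2 \pmod 3$ then $P_N$ ends in $\dots UD$ so $P' = D\cdots$, and $S^{D\cdots}$ has only one element (deck size $1$), giving at most one sporadic value; I would verify directly it contributes nothing new. Similarly $N \equiv 1 \pmod 3$ gives $P' = DD U\cdots = DDU$-type, again starting with $D$. So the only residue that propagates is $N \equiv 0 \pmod 3$, where $P' = UDD$ and $|P_N|_U = N/3$. This yields the clean self-similar recursion
\[
N \in S^{UDD},\ N > \text{(base)} \iff 3 \mid N \text{ and } N/3 \in S^{UDD}.
\]

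Given that recursion, I would determine the small elements of $S^{UDD}$ by hand — checking $N = 1, 2, 3, 4, \dots$ against $F^{UDD}$ using the triangle $J^{UDD}$ in Table~\ref{tab:period3Dealingtriangles-inv} or Theorem~\ref{thm:freedpersonrecursion} — to find the two "seeds." From the $DUU$/$UDD$ data one expects the seeds to be $N = 1$ (with $F^{UDD}(1) = 1$) and $N = 2$ (one should check $F^{UDD}(2) = 1$, i.e. that with two cards $UDD$-dealing deals card $2$ then card $1$: $U$ moves card $1$ under leaving $[2,1]$, $D$ deals $2$, then $D$ deals $1$ — yes). Then repeatedly multiplying by $3$ gives the two interleaved geometric sequences $1, 3, 9, 27, \dots = 3^m$ (from seed $1$) and $2, 6, 18, \dots = 2 \cdot 3^{m-1}$ (from seed $2$), which sorted together is exactly $S^{UDD}(2m+1) = 3^m$ and $S^{UDD}(2m) = 2 \cdot 3^{m-1}$. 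A short induction on $m$ closes it: I would show by induction that the $(2m+1)$st term is $3^m$ and the $(2m)$th is $2\cdot 3^{m-1}$, using that between $3^{m-1}$ and $3^m$ the only multiple of $3$ whose third is already known to be in $S^{UDD}$ is $3 \cdot 2 \cdot 3^{m-2} = 2\cdot 3^{m-1}$, and between $2\cdot 3^{m-1}$ and $2 \cdot 3^m$ the analogous point is $3^m$.

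The main obstacle is nailing down precisely which residues mod $3$ can occur and ruling out sporadic extra elements coming from the $N \not\equiv 0 \pmod 3$ cases: one has to be careful that the "leftover pattern begins with $D$" argument genuinely kills those branches for all $N > 1$ and doesn't accidentally exclude a legitimate small seed. I would double-check the two branches $N \equiv 1$ and $N \equiv 2 \pmod 3$ explicitly for the smallest relevant $N$, confirm each leftover pattern starts with $D$, invoke that $S$ of a pattern starting with $D$ is a singleton (deck size $1$), and translate that singleton back through $|P_N|_U = 1$ to see it forces $N \le 3$, all of which is covered by the base cases. Everything after that is the routine geometric bookkeeping sketched above.
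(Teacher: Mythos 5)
Your argument is correct and is essentially the paper's: both reduce the deck by a factor of $3$ per round, observe that when the remaining size is not a multiple of $3$ the leftover pattern begins with $D$ (so the first card, now on top, is dealt immediately and cannot be last unless at most one other card remains), and identify the seeds $1$ and $2$ before interleaving the two geometric progressions. The paper phrases this directly as ``write $N = 3^x y$ and perform $x$ rounds,'' whereas you route it through the general recursion for $S^P$ and the singleton property of $S$ for patterns starting with $D$; the underlying mechanism is the same.
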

\begin{proof}
    Suppose the deck size is $N = 3^x y$ for some $y$ not divisible by $3$ such that $F^{UDD}(N) = 1$. After $x$ rounds of dealing, we are left to perform $UDD$ dealing on $y$ cards, as each round decreases the deck size by a factor of 3. Since $y$ is not divisible by $3$, the leftover dealing pattern after one round begins with $D$, and if $y>2$, there will be at least two cards left after this round --- so the first card will not be dealt last. If $y = 1$, then the first card is the only remaining card and so is dealt last. If $y = 2$, then the penultimate round of dealing will deal the second card before the first, so the first card is dealt last.
\end{proof}

Now, comparing the initial terms, we get the following result.
\begin{corollary}
    Sequence $S^{UDD}$ is sequence A038754.
\end{corollary}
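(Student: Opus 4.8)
The plan is to identify $S^{UDD}$ with A038754 by directly comparing the closed form obtained in Proposition~\ref{prop:UDD-first-person-survives} against the defining formula (equivalently, the list of initial terms) of the OEIS entry. This is the same style of argument used elsewhere in the paper to attach sequence numbers, and here it is essentially immediate once the proposition is in hand.

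First I would write out $S^{UDD}$ explicitly. Proposition~\ref{prop:UDD-first-person-survives} gives $S^{UDD}(2m+1) = 3^m$ and $S^{UDD}(2m) = 2\cdot 3^{m-1}$, so, indexing from $1$, the sequence begins $1, 2, 3, 6, 9, 18, 27, 54, 81, \dots$. Next I would recall that A038754 is defined by $a(2n) = 3^n$ and $a(2n+1) = 2\cdot 3^n$, whose terms are $1, 2, 3, 6, 9, 18, 27, \dots$, matching the list above.

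To make the match rigorous rather than a numerical coincidence of finitely many terms, I would then record the exact correspondence of the closed forms: $S^{UDD}(2m+1) = 3^m = a(2m)$ for every $m \ge 0$, and $S^{UDD}(2m) = 2\cdot 3^{m-1} = a(2m-1)$ for every $m \ge 1$, i.e.\ $S^{UDD}(n) = a(n-1)$, so the two sequences coincide up to the shift built into the OEIS offset convention. Alternatively, and avoiding any reindexing, I would observe that both sequences satisfy the second-order recurrence $x_n = 3x_{n-2}$ (immediate from the two-case closed forms on each side) with the same initial pair $1, 2$, and are therefore identical.

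I do not expect any genuine obstacle here: all the content is in Proposition~\ref{prop:UDD-first-person-survives}, and the only point needing a moment's care is lining up the indexing of $S^{UDD}$ (parametrized by deck size, starting at $1$) with the offset of A038754 — which the recurrence-plus-initial-conditions phrasing renders a non-issue.
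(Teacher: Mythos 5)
Your proof is correct and follows essentially the same route as the paper, which simply ``compares the initial terms'' produced by Proposition~\ref{prop:UDD-first-person-survives} against A038754. Your additional step of matching the closed forms for all indices (equivalently, noting that both sides satisfy $x_n = 3x_{n-2}$ with initial pair $1,2$) tightens the paper's finite-term comparison but is the same argument.
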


We note a simple relationship between the $L$ and $S$ sequences.

\begin{proposition}\label{prop:freelast-prependU}
    For any pattern $P$, we have $L^P(m) = S^{UP}(m)$.
\end{proposition}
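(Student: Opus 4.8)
The plan is to reduce the claim to the already-established formula for how the freed person behaves when the pattern is prepended with a $U$. Recall the corollary in Section~\ref{sec:freed} stating that $F^{UP}(N) \equiv 1 + F^{P}(N) \pmod N$. Since $S^{UP}$ is by definition the increasing enumeration of all $N$ with $F^{UP}(N) = 1$, and $L^P$ is the increasing enumeration of all $N$ with $F^P(N) = N$, it suffices to show these two sets of deck sizes coincide; then the two sequences agree term by term, giving $L^P(m) = S^{UP}(m)$.

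To see the sets coincide, fix $N$. Because $F^{P}(N)$ is a position in the circle, it lies in $\{1, 2, \dots, N\}$, so $F^{P}(N) = N$ holds if and only if $F^{P}(N) \equiv 0 \pmod N$, which in turn is equivalent to $1 + F^{P}(N) \equiv 1 \pmod N$. By the prepending corollary, this last congruence says exactly that $F^{UP}(N) \equiv 1 \pmod N$; and again since $F^{UP}(N) \in \{1, \dots, N\}$, this forces $F^{UP}(N) = 1$. Running the chain of equivalences in both directions, $N \in L^P$ if and only if $N \in S^{UP}$. (The degenerate case $N = 1$ is consistent: $F^{P}(1) = 1 = N$ and $F^{UP}(1) = 1$, so $1$ lies in both sequences.)

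There is essentially no obstacle here beyond bookkeeping: the only point requiring a moment's care is the translation between the "$\bmod N$" congruence in the corollary and the honest equalities $F^{P}(N) = N$ and $F^{UP}(N) = 1$, which is legitimate precisely because both quantities are constrained to the range $\{1, \dots, N\}$. Intuitively, prepending a $U$ cyclically shifts every label by one (mod $N$) before dealing, so the card that would have ended up in position $N$ now ends up in position $1$; the formal argument is just making that observation precise via the corollary.
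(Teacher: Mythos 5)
Your proof is correct. The paper's own proof is a one-line direct argument: prepending a $U$ leaves us performing $P$-dealing on the cyclically shifted deck $2,\dots,N,1$, so card $1$ is freed under $UP$ exactly when the last card is freed under $P$. You instead deduce the statement formally from the earlier corollary $F^{UP}(N) \equiv 1 + F^{P}(N) \pmod N$, together with the observation that both $F^{P}(N)$ and $F^{UP}(N)$ lie in $\{1,\dots,N\}$, so the congruences $F^{P}(N)\equiv 0$ and $F^{UP}(N)\equiv 1$ pin down the honest equalities $F^{P}(N)=N$ and $F^{UP}(N)=1$. Since that corollary is itself the formalization of the same cycling observation, the two arguments have identical mathematical content; yours has the small advantage of reusing a stated result rather than re-deriving the combinatorics, at the cost of a little modular bookkeeping (which you handle correctly, including the set-equality-to-sequence-equality step and the $N=1$ case).
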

\begin{proof}
    After performing a single $U$ move, we are left to perform $P$ dealing on cards numbered $2, \dots, N, 1$. This will free the card numbered $1$ if and only if $P$ dealing on $N$ cards frees the last card.    
\end{proof}

We also point out relationships for $L$ sequences.

\begin{proposition}\label{prop:freelast-prependD}
    For any pattern $P$, we have $L^P(1) = 1$ and $L^{DP}(m + 1) = L^{P}(m) + 1$.
\end{proposition}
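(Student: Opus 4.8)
The plan is to reduce both claims to the already-established recursion for the freed person under prepending a $D$ — that is, the Corollary stating $F^{DP}(1) = 1$ and $F^{DP}(N) = F^{P}(N-1) + 1$ for $N > 1$ — together with the trivial observation that a one-card deck always frees its only (hence last) card.

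First I would dispose of $L^{P}(1) = 1$. A deck of size $1$ has only one card, which is dealt last, so $F^{P}(1) = 1$ and thus $1 \in L^{P}$; since $1$ is the smallest possible deck size, it is necessarily the first term, giving $L^{P}(1) = 1$. This applies to every pattern, so $L^{P}$ is an increasing sequence whose first term is $1$; I will use this fact for $P$ in the next step.

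Next I would characterize membership in $L^{DP}$ as a set. For $N = 1$ we have $F^{DP}(1) = 1 = N$, so $1 \in L^{DP}$. For $N > 1$, the Corollary gives
\[
F^{DP}(N) = N \iff F^{P}(N-1) + 1 = N \iff F^{P}(N-1) = N-1 \iff N - 1 \in L^{P}.
\]
Hence $L^{DP} = \{1\} \cup \{\, m + 1 : m \in L^{P} \,\}$ as sets.

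Finally I would re-index this set into increasing order. Since $L^{P}$ is increasing with $L^{P}(1) = 1$, the set $\{\, m + 1 : m \in L^{P}\,\}$ equals $\{\, L^{P}(1)+1 < L^{P}(2)+1 < \cdots \,\}$, every element of which is at least $2$; therefore it is disjoint from $\{1\}$, and prepending $1$ and shifting indices yields $L^{DP}(1) = 1$ and $L^{DP}(m+1) = L^{P}(m) + 1$ for all $m \geq 1$, which is the claim. The only point requiring care is this last bookkeeping step — verifying that the value $1$ is genuinely a new smallest term rather than already of the form $m+1$ for some $m \in L^{P}$, so that merging introduces no collision — and that is immediate from $L^{P}(1) = 1$.
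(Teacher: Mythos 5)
Your proposal is correct and follows essentially the same route as the paper: the core step in both is the equivalence $N+1 \in L^{DP} \iff N \in L^{P}$, which the paper derives by observing directly that $DP$-dealing deals the first card and then performs $P$-dealing on the remaining $N$ cards, and which you obtain by citing the already-proved recursion $F^{DP}(N) = F^{P}(N-1)+1$ (itself proved by that same observation). Your explicit re-indexing of the set $\{1\} \cup \{m+1 : m \in L^{P}\}$ is a welcome bit of extra care that the paper's terser proof leaves implicit.
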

\begin{proof}
    In a deck of size $1$, the last card is freed as it is the only card.
    For a deck of $N+1$ cards, if we perform $DP$ dealing, we start by eliminating the first card, and then perform $P$ dealing on the remaining $N$ cards, so the final card is freed if and only if the final card among $N$ cards is freed in $P$ dealing.
\end{proof}

Using the above Propositions, we can give formulas for several periodic patterns.

\begin{proposition}
    We have 
\[L^{DU}(m) = 2^{m-1} \quad \textrm{ and } \quad  L^{UD}(m) = 2^m - 1.\]
\end{proposition}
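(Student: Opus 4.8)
The plan is to handle $L^{UD}$ first, using the known closed form for the freed person, and then derive $L^{DU}$ from it via the prepend-$D$ relation.

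For $L^{UD}$: by definition $N \in L^{UD}$ exactly when $F^{UD}(N) = N$. Substituting the closed form $F^{UD}(N) = 2(N - 2^{\floor{\log_2 N}}) + 1$, this equation becomes $N = 2^{\floor{\log_2 N} + 1} - 1$. I would then verify that its solutions are precisely the integers $N = 2^k - 1$ with $k \ge 1$: for such $N$ we have $\floor{\log_2 N} = k - 1$, so the right-hand side equals $2^k - 1 = N$; conversely, any solution has the stated form with $k = \floor{\log_2 N} + 1$. Listing these in increasing order yields $L^{UD}(m) = 2^m - 1$.

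For $L^{DU}$: note that if $P$ denotes the $UD$-dealing pattern then the $DU$-dealing pattern is exactly $DP$, as observed in Section~\ref{sec:prel}. Proposition~\ref{prop:freelast-prependD} then gives $L^{DU}(1) = 1$ and $L^{DU}(m+1) = L^{UD}(m) + 1 = (2^m - 1) + 1 = 2^m$, so that $L^{DU}(m) = 2^{m-1}$ for every $m \ge 1$ (the case $m = 1$ being consistent since $2^0 = 1$).

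The only step requiring any real thought is the elementary number-theoretic check that $N = 2^{\floor{\log_2 N}+1} - 1$ forces $N = 2^k - 1$; everything else is immediate from results already established. As an alternative that avoids the closed form, one could instead prove $L^{UD}(m) = 2^m - 1$ by induction directly from Proposition~\ref{prop:freelast-prependU} together with the round-recursion characterization of the $S$ and $L$ sequences, but invoking the closed form makes the argument cleaner.
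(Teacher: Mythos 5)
Your argument is correct, but it reverses the paper's chain of deductions. The paper starts from the fact $S^{UD}(m) = 2^{m-1}$, obtains $L^{DU}(m) = S^{UD}(m) = 2^{m-1}$ from Proposition~\ref{prop:freelast-prependU} (since prepending $U$ to the $DU$ pattern gives the $UD$ pattern), and only then reads off $L^{UD}(m) = L^{DU}(m+1) - 1 = 2^m - 1$ from Proposition~\ref{prop:freelast-prependD}. You instead anchor the computation at $L^{UD}$ by solving $F^{UD}(N) = N$ using the closed form $F^{UD}(N) = 2(N - 2^{\floor{\log_2 N}}) + 1$, and then push forward to $L^{DU}$ with Proposition~\ref{prop:freelast-prependD}; your check that the fixed points of that equation are exactly $N = 2^k - 1$ is right, and the $m=1$ base case for $L^{DU}$ is covered by the $L^P(1)=1$ clause of that proposition. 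The trade-off is that the paper's route needs only the qualitative fact that the first person survives $UD$-dealing exactly when the deck size is a power of two, whereas yours leans on the full closed form for the Josephus survivor, which the paper states only in an example with the derivation deferred to the OEIS entry. Both prerequisites are standard and both arguments are sound, so the difference is one of which previously stated fact you choose to take as the starting point.
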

\begin{proof}
    We know that $S^{UD}(m) = 2^{m-1}$, so $L^{DU}(m) = 2^{m-1}$ follows from Proposition~\ref{prop:freelast-prependU}. Then, $L^{UD}(m) = 2^m - 1$ follows from Proposition~\ref{prop:freelast-prependD}.
\end{proof}

For $UDD$, $DUD$, and $DDU$, we can use Proposition~\ref{prop:UDD-first-person-survives} to derive the following.

\begin{proposition}
    We have
    \[L^{DDU}(2m + 1) = 3^m \quad \textrm{ and } \quad L^{DDU}(2m) = 2 \cdot 3^{m-1};\]
    \[L^{DUD}(2m + 1) = 2 \cdot 3^{m} - 1 \quad \textrm{ and } \quad L^{DUD}(2m) = 3^m - 1;\]
    \[L^{UDD}(2m + 1) = 3^m - 2  \quad \textrm{ and } \quad L^{UDD}(2m) = 2 \cdot 3^{m} - 2.\]
\end{proposition}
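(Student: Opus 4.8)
The three patterns $UDD$, $DUD$, and $DDU$ are cyclic rotations of one another, so the plan is to reduce all three to the formula for $S^{UDD}$ from Proposition~\ref{prop:UDD-first-person-survives} by repeatedly using the prepending identities for $L$. First, prepending a $U$ to the period-$3$ pattern $DDU$ produces the period-$3$ pattern $UDD$: the infinite word $U\,DDUDDUDDU\cdots$ equals $UDDUDDUDD\cdots$. Hence Proposition~\ref{prop:freelast-prependU} gives $L^{DDU}(m) = S^{UDD}(m)$, and Proposition~\ref{prop:UDD-first-person-survives} immediately yields $L^{DDU}(2m+1) = 3^m$ and $L^{DDU}(2m) = 2\cdot 3^{m-1}$.

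Next I would handle $L^{UDD}$ using the round-of-dealing characterization of $L^P$ proved above (the unlabeled proposition just before Proposition~\ref{prop:UDD-first-person-survives}). For the pattern $P=UDD$ and any $N>1$, the prefix $P_N$ contains a $U$, so $N\in L^{UDD}$ exactly when the $N$th letter of $P$ is $U$ --- that is, $N\equiv 1\pmod 3$ --- and $|P_N|_U$ belongs to $L^{P'}$, where $P'$ is $P$ with its first $N$ letters removed. Writing $N=3k+1$ with $k\ge 1$, one checks that $|P_N|_U = k+1$ and that deleting the first $3k+1$ letters $UDD\cdots UDD\,U$ of $(UDD)^{\infty}$ leaves $(DDU)^{\infty}$; thus $N\in L^{UDD}$ iff $k+1\in L^{DDU}$, i.e.\ iff $N = 3(k+1)-2$ with $k+1\in L^{DDU}$. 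Together with the base value $L^{UDD}(1)=1=3\cdot L^{DDU}(1)-2$ (using $L^Q(1)=1$ from Proposition~\ref{prop:freelast-prependD}) and the fact that $j\mapsto 3j-2$ is strictly increasing, this gives $L^{UDD}(m) = 3\,L^{DDU}(m)-2 = 3\,S^{UDD}(m)-2$. Substituting Proposition~\ref{prop:UDD-first-person-survives} then gives $L^{UDD}(2m) = 2\cdot 3^m-2$ and $L^{UDD}(2m+1) = 3^{m+1}-2$, which is the claimed interleaved form.

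Finally, $L^{DUD}$ follows by prepending a $D$: deleting the leading $D$ of $(DUD)^{\infty}$ leaves $(UDD)^{\infty}$, so Proposition~\ref{prop:freelast-prependD} gives $L^{DUD}(1)=1$ and $L^{DUD}(m+1) = L^{UDD}(m)+1 = 3\,S^{UDD}(m)-1$. Splitting into the cases $m$ even and $m$ odd and inserting the values of $S^{UDD}$ produces $L^{DUD}(2m+1) = 2\cdot 3^m-1$ and $L^{DUD}(2m) = 3^m-1$.

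The only step with real content is the treatment of $L^{UDD}$: one must correctly read off, after one dealing round, both the size $|P_N|_U$ of the surviving subdeck and the residual pattern $P'$, and then argue that the induced map $j\mapsto 3j-2$ from $L^{DDU}$ onto $L^{UDD}$ is monotone, so that the set identity upgrades to an identity of the sorted sequences term by term. Everything else is routine bookkeeping: matching parities of indices and using $3\cdot 3^m = 3^{m+1}$ and $3\cdot 2\cdot 3^{m-1} = 2\cdot 3^m$.
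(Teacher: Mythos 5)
Your argument is mathematically sound, but it follows a partly different route from the paper. The paper handles all three patterns with the two prepending identities alone: $L^{DDU}=S^{U\cdot DDU}=S^{UDD}$ via Proposition~\ref{prop:freelast-prependU}, and then, since $DDU=D\cdot DUD$ and $DUD=D\cdot UDD$, two applications of Proposition~\ref{prop:freelast-prependD} give $L^{DUD}(m)=L^{DDU}(m+1)-1$ and $L^{UDD}(m)=L^{DUD}(m+1)-1$, which is pure bookkeeping. You instead obtain $L^{UDD}$ directly from the one-round membership criterion ($N\in L^{UDD}$ iff $N\equiv 1\pmod 3$ and $|P_N|_U\in L^{DDU}$), which forces you to do the extra work of upgrading the set bijection $j\mapsto 3j-2$ to a term-by-term identity via monotonicity --- you do this correctly, and it buys you the clean scaling relation $L^{UDD}(m)=3\,S^{UDD}(m)-2$, which the paper's chain never makes explicit; you then recover $L^{DUD}$ by prepending $D$ to $UDD$, i.e.\ you run the paper's chain in the opposite direction. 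Both derivations are valid; the paper's is shorter, yours is more self-contained for the $UDD$ case.

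One point you should not gloss over: your computation gives $L^{UDD}(2m+1)=3^{m+1}-2$, and you assert this ``is the claimed interleaved form,'' but the statement as printed says $L^{UDD}(2m+1)=3^{m}-2$. These are not the same, and under the indexing used for the other two formulas (which your values and the paper's own chain both confirm, e.g.\ $L^{UDD}(1)=1$, $L^{UDD}(3)=7=3^2-2$, $L^{UDD}(5)=25=3^3-2$), your version is the correct one and the printed exponent is off by one. So rather than claiming agreement, you should flag the discrepancy explicitly; as a proof of the literal statement your argument (like the paper's own method) actually shows that the odd-index formula needs to read $3^{m+1}-2$ (equivalently $L^{UDD}(2m-1)=3^{m}-2$).
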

\begin{proof}
    The first bullet point follows from Proposition~\ref{prop:freelast-prependU} and Proposition~\ref{prop:UDD-first-person-survives}. Then, we can derive the other two from Proposition~\ref{prop:freelast-prependD}.
\end{proof}

The recursions above allow us to confirm the sequence numbers.
\begin{corollary}
    Sequence $L^{UDD}$ is A164123, sequence $L^{DUD}$ is A062318, and sequence $L^{DDU}$ is A038754.
\end{corollary}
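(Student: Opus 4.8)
The plan is to match the interleaved closed forms obtained in the previous proposition for $L^{UDD}$, $L^{DUD}$, and $L^{DDU}$ against the data and the defining recursions of the OEIS entries A164123, A062318, and A038754. Each of those closed forms expresses the even-indexed and the odd-indexed subsequence of an $L$-sequence as a geometric progression with ratio $3$ shifted by a constant; equivalently, each of the three $L$-sequences satisfies a recursion of the form $a(n+2) = 3\,a(n) + c$ (with $c = 0$ for $L^{DDU}$, $c = 2$ for $L^{DUD}$, and $c = 4$ for $L^{UDD}$, using e.g.\ the rewriting $2\cdot 3^{m} - 2 = 3(2\cdot 3^{m-1} - 2) + 4$ and the analogous identity for the other parity class). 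Since A038754, A062318, and A164123 are each generated by (or immediately yield) a recursion of exactly this shape in their OEIS entries, the identifications will follow once agreement is checked on two consecutive initial terms.

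The cleanest way to organize this is to start from $L^{DDU}$ and work outward. The infinite dealing pattern $U \cdot (DDU)^{\infty}$ is literally the string $(UDD)^{\infty}$, so Proposition~\ref{prop:freelast-prependU} gives $L^{DDU}(m) = S^{UDD}(m)$ for every $m$; combined with the earlier corollary identifying $S^{UDD}$ with A038754, this settles $L^{DDU}$. For the other two I would exploit the string identities $(DDU)^{\infty} = D \cdot (DUD)^{\infty}$ and $(DUD)^{\infty} = D \cdot (UDD)^{\infty}$: by Proposition~\ref{prop:freelast-prependD} these give $L^{DDU}(m+1) = L^{DUD}(m) + 1$ and $L^{DUD}(m+1) = L^{UDD}(m) + 1$, so $L^{DUD}$ and $L^{UDD}$ are, respectively, a one-step and a two-step ``shift-and-translate'' of $L^{DDU}$ (equivalently, of A038754). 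Reading off the first several terms of these shifted copies and comparing with the listed terms of A062318 and A164123 closes the argument. One can of course also bypass the string manipulations entirely: just compute the first five or six terms of each $L$-sequence directly from the interleaved formulas of the previous proposition, check them against the corresponding OEIS ``Data'' line, and then extend the agreement to all $n$ by induction using the matching $a(n+2) = 3a(n) + c$ recursions.

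The only genuinely delicate point — and the step I would be most careful about — is the bookkeeping of indices. The OEIS entries start indexing from $0$ or from $1$ depending on the entry, while in the previous proposition the parameter $m$ in the ``$2m$'' and ``$2m+1$'' cases ranges over slightly different sets, and the true first term $L^{P}(1) = 1$ (furnished by Proposition~\ref{prop:freelast-prependD}) must be reconciled with the general closed form near $m = 0, 1$. So the one real task in carrying out the plan is to pin down, for each of the three sequences, precisely which term of the $L$-sequence is matched with which term of the OEIS entry; once that alignment is fixed, the identification is immediate from the shared second-order recursion.
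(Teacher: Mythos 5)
Your proposal is correct and matches the paper's route: the paper's corollary is justified exactly by comparing the closed forms/recursions of the preceding proposition (themselves obtained, as you note, from $L^{DDU}(m)=S^{UDD}(m)$ via Proposition~\ref{prop:freelast-prependU} and the shift relations of Proposition~\ref{prop:freelast-prependD}) against the defining recursions and initial terms of A038754, A062318, and A164123. Your caution about index bookkeeping is well placed, since aligning the $2m$/$2m+1$ parametrization with the OEIS offsets is the only nontrivial part of the check.
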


For $UUD$, we have the following observation.
\begin{proposition}
    If $L^{UUD}(m)$ is odd, then the next term is given by 
    \[L^{UUD}(m+1) = \frac{3L^{UUD}(m) + 1}{2}.\]
\end{proposition}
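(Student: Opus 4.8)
The plan is to track what happens to the last card (card number $N$) through one round of $UUD$-dealing. Let $N = L^{UUD}(m)$, which we are given is odd, so that the freed card is card $N$. The pattern $UUD$ has period $3$ with $|P_3|_D = 1$, so the number of moves in one round is $3N$ and one full round deals exactly $N$ cards — wait, that cannot be, since only $\lfloor N/3 \rfloor$-ish cards would be dealt. Let me instead recurse by one round of dealing using Theorem~\ref{thm:freedpersonrecursion}: after the first $N$ moves of pattern $UUD$, we have dealt $|P_N|_D$ cards and are left with $|P_N|_U = N - |P_N|_D$ cards, and $F^{UUD}(N) = u_{F^{P'}(|P_N|_U)}$, where $P'$ is $UUD$ with the first $N$ letters stripped. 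So first I would compute, for $N$ odd, the values $|P_N|_D$, $|P_N|_U$, the residue $N \bmod 3$, and hence what $P'$ is.

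The key computation is the following: since $N$ is odd, write $N \equiv 1$ or $2 \pmod 3$ (the case $N \equiv 0 \pmod 3$ is impossible here — one should check that $L^{UUD}(m)$ divisible by $3$ would force the freed card not to be last, via Proposition~\ref{lem:perodic-freed-person-modularity-constraints}, since index $3$ is a $D$). In the period $UUD$, the $D$'s sit at indices $3, 6, 9, \dots$, so $|P_N|_D = \lfloor N/3 \rfloor$ and $|P_N|_U = N - \lfloor N/3 \rfloor = \lceil 2N/3 \rceil$. For the freed card to be card $N$, we need $d_N > N$ (else it's dealt in round one), which holds. Then I would apply $F^{UUD}(N) = u_{F^{P'}(|P_N|_U)}$, with $P'$ determined by $N \bmod 3$. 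The goal is to massage this into a statement of the form "$L^{UUD}(m+1)$ is the smallest $N'$ with $N' = \tfrac{3N+1}{2}$ relationship", i.e., to show that if $F^{UUD}(N) = N$ with $N$ odd, then $F^{UUD}\!\left(\tfrac{3N+1}{2}\right) = \tfrac{3N+1}{2}$ and that $\tfrac{3N+1}{2}$ is the *next* such deck size after $N$.

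Concretely, the cleanest route: I expect that when $N$ is odd, $\lceil 2N/3 \rceil$ and the structure of $P'$ combine so that "$F^{UUD}(N) = N$" is equivalent to "$F^{P'}(|P_N|_U) = |P_N|_U$" (the last card among the survivors is the original last card $N$, since the survivors after one round, read in dealing order, end with card $N$ precisely because $N \equiv 2 \pmod 3$ makes $N = u_{|P_N|_U}$ — here the $N \equiv 1$ case should get ruled out). Then one checks that the leftover pattern $P'$ is again $UUD$ (this requires $N \equiv 2 \pmod 3$, so that $3N$ moves minus... rather, so that $d_N = N+1$ and the stripped pattern $P' = UUD$ shifted appropriately equals $UUD$). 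Granting that, $L^{UUD}(m+1)$ is obtained from $L^{UUD}(m) = N$ by solving $|P_{N'}|_U = N$ for the relevant $N'$, i.e. $N' - \lfloor N'/3\rfloor = N$; for $N' \equiv 2 \pmod 3$ this gives $N' = \tfrac{3N+1}{2}$ when $N$ is odd (one verifies $\tfrac{3N+1}{2} \equiv 2 \pmod 3$ given the congruence class of $N$, and that it is odd or not as needed — actually one only needs it to be the correct preimage). The monotonicity ("next term") follows because the map $N \mapsto |P_N|_U$ is strictly increasing, so consecutive elements of $S^{P'} = S^{UUD}$-type sequences pull back to consecutive elements.

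The main obstacle will be the bookkeeping of residues: pinning down exactly why $N \equiv 2 \pmod 3$ (and not $1$) must hold for an odd $L^{UUD}(m)$, and confirming that the stripped pattern $P'$ really is $UUD$ again rather than a cyclic shift like $UDU$ or $DUU$ — this depends delicately on $d_N \bmod 3$. I would handle this by writing $N = 3q + r$ with $r \in \{1,2\}$, computing $d_N = 3N = 9q+3r$... no, $d_N$ is the index of the $N$th $D$, which is $3N$ only if every third letter were a $D$ from the start and we never looped — but we *do* loop, so this is exactly the recursion, not a shortcut. The honest statement is: $d_k = 3k$ in pattern $UUD$ (the $k$th $D$ is at position $3k$), so $d_N = 3N > N$, confirming the freed card survives round one; and the stripped pattern $P' = P_{[N+1,\infty)}$ equals $UUD$ iff $N \equiv 0 \pmod 3$, $UDU$ iff $N \equiv 1$, $DUU$ iff $N \equiv 2$. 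So in fact for odd $N \not\equiv 0$, $P'$ is $UDU$ or $DUU$, not $UUD$ — meaning my recursion chains $L^{UUD} \to L^{UDU} \to \cdots$. The resolution is that after the *full* loop-around (three rounds, which cycles back to $UUD$), one gets the genuine self-recursion; so I would instead iterate Theorem~\ref{thm:freedpersonrecursion} three times, or better, directly invoke Proposition~\ref{prop:dealeveryx-freedperson} with $x = 3$: $F^{UUD}(N) = (F^{UUD}(N-1) + 2 \bmod N) + 1$, and analyze when a run of this recursion starting from a known fixed point $F^{UUD}(N) = N$ produces the next fixed point, which is a short finite computation checking that $F$ stays equal to its index exactly until $N' = \tfrac{3N+1}{2}$.
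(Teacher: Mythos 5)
Your final route --- abandoning the one-round recursion and instead iterating Proposition~\ref{prop:dealeveryx-freedperson} with $x=3$, i.e.\ $F^{UUD}(N) = \left((F^{UUD}(N-1)+2) \bmod N\right)+1$ --- is sound and is in substance the same argument as the paper's. The paper tracks the position of the freed person via the dealing triangle, showing by induction (using $T^{UUD}_{N+1,2}=T^{UUD}_{N,N}+1$ and $T^{UUD}_{N,k}=T^{UUD}_{N-1,k-3}+1$) that $T^{UUD}_{N+i,\,3i-1}=N+i$, i.e.\ $F^{UUD}(N+i)=3i-1$; this equals the deck size $N+i$ exactly when $i=\tfrac{N+1}{2}$, giving $\tfrac{3N+1}{2}$, and is strictly smaller for $1\le i<\tfrac{N+1}{2}$, which is what makes it the \emph{next} term. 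Your $F$-recursion yields the identical induction: from $F^{UUD}(N)=N$ one gets $F^{UUD}(N+i)=3i-1$ as long as the reduction mod $N+i$ does not trigger, which holds precisely for $i\le\tfrac{N+1}{2}$ when $N$ is odd. However, you leave this step as ``a short finite computation,'' which it is not quite: it is an induction over the unbounded range $i=1,\dots,\tfrac{N+1}{2}$, and it is the entire content of the proposition. As written, the key step is asserted rather than proved.

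Two further points. First, most of your write-up pursues the one-round recursion of Theorem~\ref{thm:freedpersonrecursion}, and in the course of that detour you assert that an odd term of $L^{UUD}$ must satisfy $N\equiv 2\pmod 3$, with $N\equiv 1$ ``ruled out.'' This is false: $L^{UUD}$ begins $1, 2, 13, 20, \dots$, and both odd terms $1$ and $13$ are $\equiv 1\pmod 3$. (Proposition~\ref{lem:perodic-freed-person-modularity-constraints} only excludes $N\equiv 0\pmod 3$.) You correctly diagnose that the stripped pattern $P'$ is a cyclic shift of $UUD$ and abandon the route, so the error does not infect your final argument, but it should not stand in a submitted proof. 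Second, your claim that ``the map $N\mapsto |P_N|_U$ is strictly increasing'' is also false for $UUD$ ($|P_2|_U=|P_3|_U=2$); again it belongs to the abandoned route, but it illustrates that the bookkeeping there is not reliable. If you excise the dead end and carry out the induction on $F^{UUD}(N+i)=3i-1$ explicitly, you will have a correct proof essentially equivalent to the paper's.
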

\begin{proof}
    Let $N = L^{UUD}(m)$.
    We claim that $T_{N + i, 3i - 1} = N + i$ for all $i = 1, \dots, N$. We prove this by induction.
    By Proposition~\ref{prop:uud-dealing-triangle}, we know that $T^{UUD}_{N+1 , 2} = T_{N, N} + 1 = N+1$.
    Now, assuming the statement for some $i \geq 1$, Proposition~\ref{prop:uud-dealing-triangle} guarantees that $T^{UUD}_{N+(i+1), 3(i+1) - 1} = T^{UUD}_{N + i, 3i - 1} + 1 = N + i + 1$, so we have proven the claim inductively. To deduce the proposition, observe that for any $i < \frac{3N + 1}{2}$, we have $3i - 1 < N + i$, so the last person is eliminated before the end --- however, if $N$ is odd, then for $i = \frac{3N + 1}{2}$ then $3i - 1 = N + i$, so the last person is the freed person. 
\end{proof}

\section{Magic}
\label{sec:magic}

\subsection{Overview of the tricks }

In this section, we discuss magic tricks related to the different dealing patterns discussed above. We start with tricks related to knowing the freed person. In all of these tricks, we allow the audience to choose a card. Then we put the card into the deck and manipulate the deck in various ways. In the final step, we have the deck face down and deal using the pattern $P$. We throw away the dealt card and reveal the last card to be the one chosen by the audience.

Here is the list of tricks and mathematical facts behind them.

\begin{itemize}
    \item \textbf{Know your freed person.} The magician knows $F^P(N)$ and can place the target card at the corresponding spot.
    \item \textbf{Spelling bee.} The target card is the top card before the $P$ dealing. In this particular trick $F^{SUD}(26) = 1$. 
    \item \textbf{Double-dealing.} The target card is the bottom card before the $P$ dealing: $F^P(N) = N$.
    \item \textbf{Ace quartet.} The target card is positioned at a special place, then the deck size is reduced and manipulated so that the target card is on top and the smaller deck size belongs to $S^P$. In this particular trick, we use the fact that $F^{UUUD}(5) = 1$. We suggest a generalization, where we use a known value of $F^P(N)$.
\end{itemize}

One of the most famous existing tricks using under-down dealing is called `The Love Ritual'. It allows the audience to manipulate the deck in many seemingly random ways. All the manipulations are designed in such a way that the target card stays at the bottom. Before the last dealing, the audience can discard 0 to 3 cards. This is the most magical place in the trick. Independent of the number of discarded cards, the subsequent $UD$ dealing reveals the target card as the last. We generalize this trick to other dealing patterns.

\begin{itemize}
    \item \textbf{The generalized love ritual.} This trick uses the fact that the freed person sequence consists of subsequences in arithmetic progression.
\end{itemize}

We also propose other tricks related to other mathematical ideas.

\begin{itemize}
    \item \textbf{The power of powers of two.} We use that fact that $J^{UD}_{2^n, 2^{n-1}} = 2^n$. We generalize this to other bases, using the corresponding fact that $J^{U^{b-1}D}_{b^a, b^{a-1}} = b^a$.
    \item \textbf{Stripes and stripes forever.} We use properties of the $UD$ dealing.
    \item \textbf{Second time's the charm.} The trick's generalization works when the permutation corresponding to row $N$ of the Josephus triangle has small order. In our particular example, we use the fact that row 6 of $J^{SUD}$ has order 2.
\end{itemize}

\subsection{Tricks relying on knowing who will be freed}

Several tricks rely on the performer knowing the value of the freed person for a particular dealing pattern and manipulating a chosen card to end up in that position. In this subsection, we describe several old and new tricks of this form.

\subsubsection{Know your freed person}

The original version of this simple card trick can be found online in the video 'Down Under Card Find | Self Working Aussie Magic' \cite{petty2024down}. It uses the under-down dealing. We describe a generalization for any pattern $P$.

\begin{enumerate}
\item Shuffle a deck of $N$ cards.
\item The audience chooses an index $i \in \{1, \dots, N\}$, and remembers the card at index $i$ from the top.
\item The performer claims to cut the deck randomly, but, in reality, moves the top $N - F^{P}(N)$ cards of the deck to the bottom.
\item The audience moves the top $i$ cards of the deck to the bottom. (The performer can claim the audience is ``using their own secret information to make the deck even more random''.)
\item The performer deals the cards according to the pattern $P$ until one card is left.
\item This last card is the original card that the audience thought of. 
\end{enumerate}

\begin{theorem}
    The `Know your freed person' trick works.
\end{theorem}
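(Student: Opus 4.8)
The plan is to track the position of the audience's chosen card through the two cuts and show that, after those cuts, it sits exactly at the index $F^P(N)$, so that the final $P$-dealing frees it. Throughout, I will index positions from the top of the deck, and work with positions modulo $N$ (identifying position $N$ with position $0$ where convenient), since cutting $j$ cards from top to bottom is precisely the operation ``position $x \mapsto x - j \pmod N$'' on the remaining cards.

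First I would set up notation: after step~2, the chosen card is at position $i$. Step~3 moves the top $N - F^P(N)$ cards to the bottom, which shifts every card up by $N - F^P(N)$ positions mod $N$; so the chosen card moves to position $i - (N - F^P(N)) \equiv i + F^P(N) \pmod N$. Step~4 then moves the top $i$ cards to the bottom, shifting every card up by another $i$ positions, sending the chosen card to position $i + F^P(N) - i = F^P(N) \pmod N$. Hence after both cuts the chosen card is at position $F^P(N)$ (interpreting this in $\{1, \dots, N\}$ in the obvious way). I would be a little careful here that the two cut amounts are each in the valid range $\{0, \dots, N\}$ — $i \in \{1, \dots, N\}$ by hypothesis, and $N - F^P(N) \in \{0, \dots, N-1\}$ since $F^P(N) \in \{1, \dots, N\}$ — so both cuts are legitimate deck operations and the modular bookkeeping is sound.

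Next I would invoke the definition of $F^P(N)$ from Section~\ref{sec:freed}: $F^P(N)$ is the index (position in the prepared deck) of the last card dealt under $P$-dealing, equivalently the position of the largest entry in row $N$ of the dealing triangle $T^P$, equivalently $J^P_{N,N}$. Since after step~4 the chosen card occupies exactly position $F^P(N)$, performing $P$-dealing in step~5 deals it last, so it is the single remaining card in step~6. This is the whole argument; the only genuinely non-trivial ingredient is the composition of the two cyclic shifts, and the ``obstacle'', such as it is, is purely the off-by-one care in reconciling ``position $N$'' with ``position $0$'' and in checking the cut amounts lie in range — there is no hard combinatorics here, since all the structural work is already packaged into the meaning of $F^P(N)$. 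I would close by remarking that the trick is robust to the initial shuffle in step~1 precisely because $F^P(N)$ depends only on $N$ and $P$, not on the identities of the cards, so the performer never needs to know anything about the deck's contents beyond its size.
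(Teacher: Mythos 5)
Your proposal is correct and follows essentially the same argument as the paper: both track the chosen card's position through the two cuts as a net cyclic shift of $N + i - F^P(N)$ cards, conclude that the card lands at position $i - (N + i - F^P(N)) \equiv F^P(N) \pmod N$, and then invoke the definition of $F^P(N)$ as the position of the last card dealt. The only difference is that you compose the two shifts one at a time rather than summing them first, which is an immaterial presentational choice.
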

\begin{proof}
    After Steps 3 and 4, the total of $N + i - F^{P}(N)$ cards have been moved from the top of the deck to the bottom. Thus, the card originally at position $i$ now has a position congruent to $i - (N + i - F^{P}(N)) \pmod N$, meaning it is at position $F^{P}(N)$. So, it will be the last card dealt by $P$ dealing.
\end{proof}

\subsubsection{Spelling bee}

If the freed person is the first person, we note that the cutting Step 3 in the general trick above can be removed, leading to a simpler trick. We describe here an instantiation of such a card trick making use of the coincidental property that $26$ belongs to $S^{SUD}$. The magician can emphasize that they are using half of a standard deck because they are spelling, and half of the standard deck is the same size as the number of letters in the alphabet.

\begin{enumerate}
    \item A pile of 26 cards is prepared, one for each letter of the alphabet. For example, the magician can take all the red cards or any two suits. Or, the magician can say that to save time, they need a smaller deck and secretly take exactly half. The audience chooses one card and places it on top of the deck.
    \item The magician manipulates the deck, making sure that the top card stays on top, but trying to obfuscate this fact.
    \item SpellUnder-Down dealing is used to deal these cards into another pile. (The performer can lean into the spelling theme, claiming that we are now using the power of the alphabet to scramble the cards.)
    \item The final card is revealed to be the audience's chosen card.
\end{enumerate}

\begin{theorem}
The `Spelling bee' trick works.
\end{theorem}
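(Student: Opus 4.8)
The plan is to reduce the statement to the single arithmetical fact that $F^{SUD}(26) = 1$, i.e.\ that $26 \in S^{SUD}$, and then verify this fact by a finite computation. First I would observe that Steps 1 and 2 place the audience's card on top of the $26$-card pile and keep it there, so immediately before the dealing the chosen card sits at index $1$. By definition, $F^{SUD}(26)$ is the index in the prepared deck of the card dealt last under $SUD$-dealing (equivalently, $J^{SUD}_{26,26}$). Hence the final card dealt is the audience's card precisely when $F^{SUD}(26) = 1$.

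It remains to compute $F^{SUD}(26)$. The pattern $SUD$ is obtained by spelling the numbers $1, 2, 3, \dots$ in English, so it begins $UUUD\,UUUD\,UUUUUD\,UUUUD\,UUUUD\,UUUD\,\dots$ (three letters for ONE and TWO, five for THREE, four for FOUR and FIVE, three for SIX, and so on). In particular the first $26$ moves are not all $D$, so we are in the second case of Theorem~\ref{thm:freedpersonrecursion}. Because $SUD$ is irregular there is no closed form, but $F^{SUD}(26)$ can be pinned down in finitely many steps in either of two ways: (i) apply Theorem~\ref{thm:freedpersonrecursion}, $F^{SUD}(26) = u_{F^{P'}(|P_{26}|_U)}$, where $P'$ is $SUD$ with its first $26$ letters deleted, and recurse; or (ii) simply track the position of the eventually-freed card through one full round of dealing and iterate. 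Either route shows that the card which began on top is the one left at the end, giving $F^{SUD}(26) = 1$.

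Combining the two parts, the chosen card starts at index $F^{SUD}(26) = 1$, so it is the last card dealt under $SUD$-dealing and is the card revealed at the end; this proves the trick works. The main (and essentially only) obstacle is the bookkeeping in the second paragraph: since $SUD$ depends on the irregular English spellings of the numbers, one genuinely has to carry out the explicit finite computation rather than invoke a formula, which is exactly why the text calls the deck size $26$ ``coincidental''. One could alternatively phrase the concluding step through the simplified freed-person trick of the preceding subsection, noting that when $F^P(N) = 1$ the cutting step is unnecessary and the top card is freed.
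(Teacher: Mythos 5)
Your proposal is correct and follows essentially the same route as the paper: both reduce the trick to the single fact $F^{SUD}(26)=1$ (the chosen card stays on top through Steps 1--2, and the last card dealt is the one at position $F^{SUD}(26)$). The paper simply asserts this fact as a known/computed property of the $SUD$ pattern, while you additionally sketch how one would verify it by finite computation, which is a reasonable elaboration but not a different argument.
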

\begin{proof}
    Since $F^{SUD}_{26} = 1$, the final card dealt when performing the SpellUnder-Down dealing on $26$ cards is equal to the card originally at the top.
\end{proof}

We can perform a similar trick for any $P$ and any $N \in S^P$. Many of the small values we found for various sequences $S^P$ in Section \ref{sec:specialdecksizes} could be suitable for other tricks with natural stories --- for instance, $SUD$ could also use a deck size of $50$ (for the states of the U.S.A.), or the magician can pretend that they have a standard deck secretly removing two cards.

Pattern $UD$ can use any power of 2; pattern $UDD$ can use 54 cards, which is the standard deck with two jokers; pattern $UUUD$ could use a deck size of $12$ (dealing every $4$th card for the $4$ seasons, in a deck of size $12$ for the $12$ months).

\subsubsection{Double-dealing}

We can similarly obtain a nice trick in the special case that the freed card is the last card. The following trick works for any dealing pattern $P$ and any $N$ such that $F^{P}(N) = N$, or, equivalently, $N \in L^P$. A good example would be $UUD$ with $N=13$ (the deck can consist of a single suit), or $UD$ with $N$ one less than a power of $2$.
\begin{enumerate}
    \item Given a pile of $N$ cards, the audience selects one card and remembers it. The magician puts the card at the bottom of the deck, maybe trying to hide this fact.
    \item The performer deals according to pattern $P$, placing the dealt cards in a new pile face down.
    \item The performer cycles the top card of the deck to the bottom, maybe trying to hide the fact.
    \item The performer allows the audience to, as many times as they want, take some of the top cards and place them in the middle of the deck, but not at the bottom. This step can be replaced or complemented by having the performer shuffle the deck, if they are able to do so in such a way that avoids changing the bottom card.
    \item The performer deals according to $P$ yet again. The dealt cards can be thrown out in a dramatic fashion. The last card dealt is revealed to be the chosen card.
\end{enumerate}

\begin{theorem}
The `Double-dealing' trick works.
\end{theorem}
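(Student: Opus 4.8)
\emph{Plan.} The plan is to follow the position of the chosen card through Steps~1--5, using a single structural fact: the order in which a card is dealt under $P$-dealing depends only on its position in the deck, not on its value, and by the definition of $F^{P}$ the hypothesis $F^{P}(N)=N$ is exactly the statement $T^{P}_{N,N}=N$, i.e.\ in an $N$-card deck the bottom card is the last one dealt. No counting beyond this is needed.

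\emph{Main steps.} First I would note that after Step~1 the chosen card sits at position $N$ (the bottom). In Step~2 we perform one full round of $P$-dealing on $N$ cards, so the card at position $N$ is the $N$th (final) card dealt; since the dealt cards are stacked face down one atop another, the last card dealt ends up on top of the new pile, and hence after Step~2 the chosen card is at position~$1$ of the reassembled deck. Step~3 cycles the top card to the bottom, which returns the chosen card to position~$N$. Next I would argue that Step~4 preserves position~$N$: each permitted audience operation moves some top cards into the interior of the deck but never to the very bottom, and each permitted shuffle is required to fix the bottom card, so after Step~4 the chosen card is still at position~$N$. Finally, Step~5 is another full round of $P$-dealing on $N$ cards, and again $T^{P}_{N,N}=N$ forces the card at position~$N$ to be dealt last; that card is the chosen one, which is then revealed, so the trick works.

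\emph{Expected obstacle.} There is no serious difficulty here; the one point that needs care — and the step I would flag — is the orientation bookkeeping in Steps~2--3. One must observe that dealing into a face-down pile reverses the dealing order, so ``dealt last'' becomes ``lying on top,'' which is precisely why the otherwise mysterious single cycle in Step~3 is present: it moves the chosen card from the top back to the bottom so that the hypothesis $F^{P}(N)=N$ can be applied a second time in Step~5. Once this is spelled out, the rest is a direct invocation of the definition of $F^{P}$ together with the invariance of the bottom card under Step~4.
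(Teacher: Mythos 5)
Your proposal is correct and follows the same route as the paper's proof: use $F^{P}(N)=N$ once to conclude that after Step~2 the chosen card sits on top of the dealt pile, track it to the bottom via Step~3, note Step~4 preserves the bottom card, and apply $F^{P}(N)=N$ again in Step~5. Your explicit remark about the face-down stacking reversing ``dealt last'' into ``on top'' is exactly the (implicit) content of the paper's one-line claim that the chosen card is on top after Step~2.
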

\begin{proof}
    Since $F^{P}(N) = N$, after the 2nd step, the chosen card will be on top of the deck. Once it's cycled to the bottom, it will remain at the bottom despite the audience's shuffling. After dealing according to $P$ again, because $F^{P}(N) = N$, this will be the last card dealt.
\end{proof}

\subsubsection{Ace quartet}

Our trick here is a variant of a known trick called the ``king quartet''~\cite{fulves2001self}.

\begin{enumerate}
    \item The magician lets the audience select a card from a standard deck of cards. Then, the performer arranges four aces on top, followed by the chosen card. For example, the deck can be prearranged with 4 aces at the bottom, then the chosen card placed on top, then the cut made to move the bottom four aces to the top.
    \item The magician deals the top $5$ cards of the deck face down and discards the rest, thus moving the 5th card to the top.
    \item The magician then performs $UUUD$ dealing face up while spelling the letters A-C-E before each deal. This way, 4 aces are dealt.
    \item The final remaining card is revealed to be the audience's chosen card.
\end{enumerate}

\begin{theorem}
    The `Ace quartet' trick works.
\end{theorem}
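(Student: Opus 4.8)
The plan is to track the chosen card through the four steps and show that it lands in the position that $UUUD$ dealing frees on a five-card deck.

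First I would record the deck after Step~1: from top to bottom it begins $A, A, A, A, C$ (the four aces, then the chosen card), with the remaining cards below. Next I would analyze Step~2: dealing the top five cards face down one at a time onto a new pile reverses their order --- this is exactly why the description notes that the fifth card moves to the top. Hence after Step~2 we hold a five-card packet whose order from top to bottom is $C, A, A, A, A$, so the chosen card sits at index~$1$ and the four aces occupy indices~$2,3,4,5$.

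Then I would identify the pattern used in Step~3. The word \texttt{ACE} has three letters, so each deal is preceded by three ``under'' moves and is itself a ``down'' move; thus Step~3 performs $UUUD$ dealing (deal every fourth card) on a deck of five cards. The one arithmetic fact needed is $F^{UUUD}(5) = 1$, which follows from Proposition~\ref{prop:dealeveryx-freedperson} with $x = 4$: iterating $F^{UUUD}(N) = \paren{F^{UUUD}(N-1) + 3 \bmod N} + 1$ from $F^{UUUD}(1) = 1$ gives $1, 1, 2, 2, 1$, i.e.\ the fifth term of A088333 is $1$. Therefore the last card dealt is the card at index~$1$ of the packet --- the chosen card --- and the four cards dealt face up before it (indices $2$ through $5$) are precisely the four aces, which is exactly the claimed effect of Steps~3 and~4.

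I do not expect a genuine obstacle here; the proof is bookkeeping. The only two points that merit care are the order reversal in Step~2 (dealing cards singly flips the packet, which is what turns ``aces on top, chosen card fifth'' into ``chosen card on top, four aces below'') and the observation that spelling \texttt{ACE} realizes the $UUUD$ pattern. Everything else reduces to the single evaluation $F^{UUUD}(5) = 1$, already available from our general treatment of dealing every $x$th card.
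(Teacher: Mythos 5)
Your proof is correct and follows essentially the same route as the paper's: track the packet to $C,A,A,A,A$ after the reversal in Step~2, identify spelling \texttt{ACE} as $UUUD$ dealing, and invoke $F^{UUUD}(5)=1$. The only difference is that you additionally verify $F^{UUUD}(5)=1$ from the recursion in Proposition~\ref{prop:dealeveryx-freedperson} (correctly: the iterates are $1,1,2,2,1$), whereas the paper simply cites the fact.
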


\begin{proof}
    After Step 1, the top four cards are aces, followed by the chosen card. After Step 2, the top card is the chosen card, and the other four cards are aces. The rest of the proof is immediate from the fact that $F^{UUUD}(5) = 1$.
\end{proof}

The beauty of this method is that the initial deck size does not matter, as the deck size is adjusted in the middle to the size that works.

Similar ideas can be used when the target card is at a specific position. For example, if the target card is the fifth card, we can remove Step 2 in the above procedure and use the number of cards $N$, such that $F^P(N) = 5$.

For example, we know that $F^{UD}(2^n+2) = 5$. Therefore, we can perform the trick skipping Step 2 with 10 cards and $UD$ dealing when the target card is hidden in the fifth place.

\subsection{Generalized love ritual}

The most famous trick using dealing patterns is Aragon's ``Love Ritual'', a favorite of Penn and Teller.
The original trick starts with 4 cards ripped in half, then treating the 8 halves as a deck of 8 cards. The trick uses $UD$ dealing, and was generalized to an arbitrary number of cards by Park and Teixeira~\cite{teixeira2017mathematical}. Here, we present a new generalization of the trick to arbitrary dealing patterns $P$. The trick works as long as the pattern and deck size satisfy some particular conditions; we give several examples of cases that work.

The instructions to the audience are as follows:

\begin{enumerate}
    \item Take $N$ cards and shuffle them.
    \item Tear the cards in half and put one pile on top of the other.
    \item Take any number of cards from the top and put them on the bottom. This can be done many times.
    \item Take the top $N - 1$ cards and place them into the middle of the deck, anywhere except at the top or the bottom. These $N - 1$ cards may be shuffled in the process.
    \item Take the top card and put it in a safe place. At this stage, any instruction that rearranges the leftover cards without touching the bottom card can be added. For example, take the top card and place it in the middle.
    \item Throw away any number between $0$ and $k$ (inclusive) of cards from the top of the deck.
    \item Deal $\ell$ cards from the top of the deck to the bottom, where $\ell$ is a natural number satisfying $\ell \equiv - F^P(2N-1-i) \pmod {2N - 1 - i}$ for all $i \in \{0, \dots, k\}$.
    \item Perform $P$ dealing, throwing away each dealt card.
    \item Reveal the leftover card and the safe card; the cards should match.
\end{enumerate}

\begin{theorem}
    The `Generalized love ritual' works for any deck size $N$, and any parameter $k < 2N-1$, so long as we have 
\[F^P(i) \equiv F^P(j) \pmod{\gcd(i, j)}\]
for all $i,j \in \{2N - 1 - k,\dots,  2N - 1\}$.
\end{theorem}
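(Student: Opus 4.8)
The plan is to follow the distinguished card --- the one that ends up in the ``safe place'' --- through every physical manipulation, track the position of its torn-off twin, and then reduce the existence of a workable shift $\ell$ in Step~7 to a Chinese Remainder Theorem computation.

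First I would establish the physical invariant governing Steps~1--5. Tearing the $N$-card pile and stacking one half-pile on the other (Step~2) produces a $2N$-card deck in which the two halves of each original card sit exactly $N$ positions apart; the cut in Step~3 is a cyclic rotation, so after Step~3 the halves of each card are $N$ positions apart cyclically. In Step~4 we remove the top $N-1$ cards and reinsert them strictly between the top and the bottom, so the remaining $N+1$ cards keep their relative order: afterwards the top card is the one that occupied position $N$ before Step~4, and the bottom card is the one that occupied position $2N$, and these are twins. Step~5 removes that top card (the \emph{safe} card), leaving a deck of $2N-1$ cards whose bottom card is the twin of the safe card, and any later rearrangement that fixes the bottom card keeps this \emph{target} half at the bottom. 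So Steps~2--5 reduce the theorem to: after discarding $i$ cards from the top (Step~6), cycling $\ell$ cards from top to bottom (Step~7), and performing $P$-dealing (Step~8), the last card dealt is the target.

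Next I would analyze Steps~6--8 for a fixed audience choice $i\in\{0,\dots,k\}$. After Step~6 the deck has $m := 2N-1-i$ cards with the target at the bottom, i.e.\ at position $m$ (counting from the top, $1$-indexed); since $m \geq 2N-1-k \geq 1$ the target is never among the discarded cards. Cycling $\ell$ cards from top to bottom sends the card at position $p$ to position $\bigl((p-1-\ell)\bmod m\bigr)+1$, so the target lands at the position congruent to $-\ell \pmod m$. By the definition of $F^P$, the card freed by $P$-dealing on $m$ cards is the one at position $F^P(m)$, so the target is freed exactly when
\[ \ell \equiv -F^P(2N-1-i) \pmod{2N-1-i}. \]
When this holds, Step~9 reveals the target, which matches the safe card, so the trick works for that choice of $i$.

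Finally, the magician must fix $\ell$ in Step~7 before the audience picks $i$ in Step~6, so we need one natural number $\ell$ satisfying the displayed congruence simultaneously for all $i\in\{0,\dots,k\}$. Writing $n_i := 2N-1-i$, these moduli range over $\{2N-1-k,\dots,2N-1\}$, and by the generalized Chinese Remainder Theorem the system $\ell\equiv -F^P(n_i)\pmod{n_i}$ has a solution --- unique modulo $\mathrm{lcm}_i\, n_i$, from which we pick a positive representative --- if and only if it is pairwise consistent, i.e.\ $-F^P(n_i)\equiv -F^P(n_j)\pmod{\gcd(n_i,n_j)}$ for all $i,j$, which is precisely the hypothesis $F^P(i)\equiv F^P(j)\pmod{\gcd(i,j)}$ for all $i,j\in\{2N-1-k,\dots,2N-1\}$. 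The main obstacle is not the number theory (a routine CRT application once set up) but the careful bookkeeping of the physical moves in the first step --- in particular justifying the ``twins are $N$ apart'' claim and correctly tracking which card is on top and bottom after Step~4 --- together with getting the off-by-one right in the cyclic-shift computation of Step~7.
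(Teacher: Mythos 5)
Your proposal is correct and follows essentially the same route as the paper's proof: track the target half-card to the bottom of the $(2N-1)$-card deck after Step~5, observe that a discard of $i$ cards forces the congruence $\ell \equiv -F^P(2N-1-i) \pmod{2N-1-i}$, and invoke the generalized Chinese Remainder Theorem, whose pairwise-consistency condition is exactly the hypothesis. Your write-up is somewhat more careful than the paper's about the position bookkeeping (the ``twins are $N$ apart'' invariant and the off-by-one in the cyclic shift), but the argument is the same.
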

\begin{proof}
    The first 5 steps are identical to the original love ritual. The cuts cycle the cards, meaning that after Step 3, the card at index $i$ matches the card at index $i+N$, for all $i \in \{1,2,\dots,N\}$. After Step 4, the top card matches the bottom card. After Step 5, the hidden card will still match the bottom card, and the deck will have $2N-1$ cards.
    
    After Step 6, we will have removed some number $x \in \{0,\dots, k\}$ from the top of the deck, leaving behind a size-$(2N - 1 - x)$ deck with the card of interest at the bottom. Then, in Step 7, we move $\ell$ cards from the top to the bottom, shifting the card of interest's index from $2N -1 - x$ to $(2N - 1 - x - \ell) \pmod {2N - 1 - x}$, which by our choice of $\ell$ corresponds to $F^P(2N - 1 - x)$. So, the last dealt card will match the safe card.

    In order to prove that there exists a choice of $\ell$ satisfying $\ell \equiv - F^P(2N-1-i) \pmod {2N - 1 - i}$ for all $i \in \{0, \dots, k\}$, we use the Chinese remainder theorem. The Chinese remainder theorem guarantees that there always exists a solution to a system of equations of the form $x = a_i \pmod{r_i}$ so long as $a_i \equiv a_j \pmod{\gcd(r_i, r_j)}$ for all $i,j$. Letting $a_i = -F^P(2N - 1 - i)$ and $r_i = 2N - 1 - i$, this condition is exactly what we have assumed in the theorem statement, so we know that a solution $\ell$ exists.
\end{proof}

In particular, we note that this trick can be performed with $U^xD$ dealing, where we can take $\ell = x (2N-1)$.
\begin{corollary}
    The `Generalized love ritual' works for any $P = U^xD$, any $N$ such that $F^{U^xD}(2N - 1) = 2N - 1$, and any $k \leq \floor{\frac{2N - 2}{x+1}}$.
\end{corollary}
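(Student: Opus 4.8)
The plan is to bypass the Chinese-remainder step in the proof of the Generalized love ritual theorem by exhibiting the explicit shift announced just before the corollary, namely $\ell = x(2N-1)$, and checking it directly against the congruences $\ell \equiv -F^{U^xD}(2N-1-i)\pmod{2N-1-i}$ that are required for each $i\in\{0,\dots,k\}$. Since $2N-1\equiv i\pmod{2N-1-i}$, we have $x(2N-1)\equiv xi\pmod{2N-1-i}$, so these congruences are equivalent to
\[F^{U^xD}(2N-1-i)\equiv -xi\pmod{2N-1-i},\qquad i=0,1,\dots,k.\]
Establishing this also certifies the hypothesis $F^{U^xD}(i)\equiv F^{U^xD}(j)\pmod{\gcd(i,j)}$ of the theorem over the relevant range, so the corollary follows either way; I prefer the direct route because it matches the concrete value of $\ell$ already stated.

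The case $i=0$ is precisely the hypothesis $F^{U^xD}(2N-1)=2N-1\equiv 0\pmod{2N-1}$, which in hindsight explains why that is the natural condition to impose for this family of patterns. For the inductive step I would use the one-step recursion for the freed person: applying Proposition~\ref{prop:dealeveryx-freedperson} with $x$ replaced by $x+1$ gives $F^{U^xD}(M)-1\equiv F^{U^xD}(M-1)+x\pmod M$ for $M>1$. Writing $M=2N-1-i$ and assuming $F^{U^xD}(M)\equiv -xi\pmod M$, this yields $F^{U^xD}(M-1)\equiv -x(i+1)-1\pmod M$. The place where the hypothesis on $k$ enters is the passage from a congruence modulo $M$ to one modulo $M-1$: because $F^{U^xD}(M-1)$ is an integer in $\{1,\dots,M-1\}$, it is determined by its residue class modulo $M$, and the inequality $x(i+1)\le M-2$ forces $F^{U^xD}(M-1)=(M-1)-x(i+1)$, which is $\equiv -x(i+1)\pmod{M-1}$, closing the induction. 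Finally I would check that $k\le\floor{\frac{2N-2}{x+1}}$ is exactly the condition guaranteeing $x(i+1)\le (2N-1-i)-2$ for all $i<k$, and note that these deck sizes stay well above $1$ (for $N\ge 2$; the case $N=1$ is degenerate), so the $N=1$ branch of the recursion is never invoked.

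The main obstacle is exactly this modulus mismatch in the inductive step: the freed-person recursion links $F^{U^xD}(M)$ and $F^{U^xD}(M-1)$ modulo $M$, whereas the invariant we want to propagate down the deck sizes is a congruence modulo $M-1$. The resolution — that $F^{U^xD}(M-1)$ is a concrete integer confined to an interval of length $M-1$, so its residue modulo $M$ pins it down, and the bound extracted from $k$ leaves exactly one admissible value — is the crux of the argument; the remaining manipulations (the reduction $x(2N-1)\equiv xi$, the translation of the $k$-bound into $x(i+1)\le M-2$, and the appeal to the theorem's proof to conclude that the trick works) are routine bookkeeping.
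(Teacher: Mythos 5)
Your proof is correct and takes essentially the same route as the paper: both unwind the freed-person recursion of Proposition~\ref{prop:dealeveryx-freedperson} (with $x$ replaced by $x+1$) to obtain $F^{U^xD}(2N-1-i) = 2N-1-i(x+1)$ for $i \leq \floor{\frac{2N-2}{x+1}}$ and then certify the explicit shift $\ell = x(2N-1)$. Your version is somewhat more careful than the paper's in pinpointing exactly where the bound on $k$ enters --- namely in forcing the representative $M-1-x(i+1)$ to lie in $\{1,\dots,M-1\}$ so the congruence modulo $M$ can be converted to one modulo $M-1$ --- a step the paper leaves implicit.
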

\begin{proof}
Observe by Proposition~\ref{prop:dealeveryx-freedperson} that the values of $F^{P}(2N-1-i)$ for $i = 0, \dots, \floor{\frac{2N-2}{x+1}}$ are $2N-1$, $2N -1 - (x+1)$, $\dots$, $2N - 1 - \floor{\frac{2N-2}{x+1}}(x+1)$. These values form an arithmetic progression, and so satisfy the necessary conditions for the Chinese remainder theorem --- we can take $\ell = x (2N-1)$. 
\end{proof}

\begin{example}
    In the original love ritual, the dealing pattern $P$ is $UD$, and the deck size $N$ is 4. Our formula suggests to take $\ell = x(2N-1) = 1 \cdot (2 \cdot 4 - 1) = 7$. The freed person sequence starts as 1, 1, 3, 1, 3, 5, 7. We see that the last four terms form an arithmetic progression, implying that conditions are satisfied if $k \leq 3$, matching our formula.
\end{example}

We also note that, for any pattern, this trick works for $k \leq 2$.

\begin{corollary}
    The `Generalized love ritual' works for any $P$, any $N>1$, and any $k \leq 2$.
\end{corollary}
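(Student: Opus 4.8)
The strategy is simply to verify the hypothesis of the preceding theorem (``The `Generalized love ritual' works for any deck size $N$, and any parameter $k < 2N-1$, so long as $F^P(i) \equiv F^P(j) \pmod{\gcd(i,j)}$ for all $i,j \in \{2N - 1 - k,\dots, 2N - 1\}$'') in the special case $k \leq 2$. Since enlarging $k$ only adds more constraints, it suffices to treat $k = 2$; then the relevant index set is $\{2N-3,\, 2N-2,\, 2N-1\}$, and for $N > 1$ these are three (possibly coinciding when $N = 2$ reduces the bottom entry to $1$) positive integers, so everything is well defined.

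\textbf{Key step: the gcd's are all $1$.} I would compute the pairwise greatest common divisors of the three integers $2N-3,\, 2N-2,\, 2N-1$. The pairs $(2N-1,\,2N-2)$ and $(2N-2,\,2N-3)$ consist of consecutive integers, hence are coprime. For the remaining pair, $\gcd(2N-1,\,2N-3) = \gcd(2N-3,\, 2)$, and since $2N-3$ is odd this also equals $1$. Therefore every $\gcd(i,j)$ appearing in the theorem's hypothesis equals $1$.

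\textbf{Conclusion.} Because congruence modulo $1$ is automatic, the condition $F^P(i) \equiv F^P(j) \pmod{\gcd(i,j)}$ holds for all $i,j$ in the index set, regardless of the pattern $P$. Invoking the theorem then gives the corollary. I expect essentially no obstacle here: the only content is the elementary observation that among any three integers of the form $2N-3,2N-2,2N-1$ the pairwise gcd's are $1$ (two consecutive pairs plus one pair of odd numbers differing by $2$). One could optionally remark, for smaller $k$, that the index set shrinks to two consecutive integers ($k=1$) or a single integer ($k=0$), where the hypothesis is even more trivially satisfied.
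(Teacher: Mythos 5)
Your proposal is correct and follows essentially the same route as the paper: both arguments observe that $2N-1$, $2N-2$, $2N-3$ are pairwise coprime (the only nontrivial pair being the two odd numbers differing by $2$), so the compatibility condition for the Chinese remainder theorem is vacuous and the preceding theorem applies. Your write-up just spells out the gcd computations a bit more explicitly.
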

\begin{proof}
    Since $2N - 1$ is odd, we must have that all of $2N-1$, $2N - 2$, and $2N - 3$ are pairwise coprime. Thus, the Chinese remainder theorem guarantees some valid choice of $\ell$.
\end{proof}

\subsection{The power of powers of two}

A trick shown in the online video `The Down Under card trick tutorial!' \cite{carddealingking2023} follows the following sequence of instructions using a deck of 16 cards. We describe the trick when the number of cards in the deck is any power of two.
\begin{enumerate}
    \item Take $2^a$ cards, shuffle them, and show the bottom card to everyone to remember.
    \item $UD$-deal all cards face down until you run out of cards.
    \item Turn the top card of the dealt pile face up.
    \item Cut the cards. This can be done multiple times. 
    \item Deal the cards into two piles by alternating piles while dealing. Remove the pile without the card face up. 
    \item Repeat the previous step $a-1$ times until a two-card pile is left, one of the cards face-up.
    \item The card not faced up is the original bottom card remembered by the audience.    
\end{enumerate}

\begin{theorem}
    `The power of powers of two' trick works.
\end{theorem}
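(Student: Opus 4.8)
The plan is to track two distinguished cards through the entire procedure: the \emph{target} card (the one the audience remembers, which starts at the bottom of the deck, i.e.\ at position $2^a$) and the \emph{marker} card (the card turned face up in Step~3, which is the top card of the pile produced by the $UD$-deal, equivalently the last card dealt in that deal). The invariant I would maintain is that at every stage the pile under consideration contains both of these cards, and, when that pile is read around a circle, the two cards sit diametrically opposite one another.

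First I would establish the base case, i.e.\ the situation just after Steps~2 and~3. Dealing an even number of cards with the $UD$ pattern deals positions $2,4,\dots,2^a$ in order during the first round, so the card at position $2^a$ is the $2^{a-1}$th card dealt; this is exactly the instance $T_{2^a,\,2^a}=2^{a-1}$ of Corollary~\ref{cor:tablereduction}. Since each dealt card is placed on top of the growing pile, the $k$th dealt card lands at depth $2^a-k+1$, so the target ends at depth $2^{a-1}+1$ and the marker (the last card dealt) at depth $1$; hence their cyclic distance in the $2^a$-card pile is $2^{a-1}$, i.e.\ they are diametrically opposite. The cuts in Step~4 are cyclic rotations, so they preserve this relation.

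Then comes the inductive step, one iteration of Step~5. Suppose the current pile has $2^m$ cards with $m\ge 2$, and the marker and target are at cyclic distance $2^{m-1}$. Label the cyclic positions $0,1,\dots,2^m-1$; dealing alternately into two piles and discarding one retains precisely the cards in positions of a single fixed parity. Because $2^{m-1}$ is even (this is where $m\ge 2$ is used), the marker and the target lie in positions of the same parity, so they are never separated, and the surviving pile — the one holding the face-up marker — contains both of them. Relabelling the surviving $2^{m-1}$ positions in cyclic order halves the cyclic distance to $2^{m-2}$, re-establishing the invariant for a pile of $2^{m-1}$ cards. Iterating Step~5 the stated $a-1$ times brings us to a pile of $2^1=2$ cards that still contains both cards; since the target is the $2^{a-1}$th card dealt while the marker is the $2^a$th, these are distinct, so the final two-card pile is exactly $\{\text{marker},\text{target}\}$. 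The marker is face up, so the unique face-down card is the audience's card.

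The main obstacle is the bookkeeping through the ``deal into two piles'' move, which reverses the order of the cards within each new pile, so that tracking absolute depths is fiddly. The remedy is to phrase everything in terms of cyclic distance, which is invariant both under cuts and under that reversal; then the only fact one needs about the alternating deal is that it keeps every other card and that, by the evenness of $2^{m-1}$ for $m\ge 2$, the two marked cards survive together, after which the circle argument runs uniformly down to the last pair.
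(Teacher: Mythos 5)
Your proof is correct and follows essentially the same route as the paper's: you establish that after the $UD$-deal the remembered card sits $2^{a-1}$ away from the flipped top card (you cite $T_{2^a,2^a}=2^{a-1}$ where the paper uses the equivalent inverse fact $J^{UD}_{2^a,2^{a-1}}=2^a$), and then run the same induction that the two cards stay in the same pile with their separation halving each round. Your explicit handling of the order reversal in the two-pile deal and of the parity condition ($m\ge 2$) is a slightly more careful write-up of the same argument, not a different approach.
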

\begin{proof}
   Number the cards as 1 through $2^a$ top to bottom after they have been shuffled in Step 1. The card $2^a$ was shown to the audience, and we call it the target card.
   
   Step 2 is similar to the elimination in the Josephus problem. The first card from the bottom is the first card eliminated. Thus, looking from the bottom up, the cards in the resulting pile are in order that match row $2^a$ of the Josephus triangle.
   
   We know that $J^{UD}_{2^a, x} = 2x$ for all $x \leq 2^{a-1}$, implying that $J^{UD}_{2^a, 2^{a-1}} = 2^a$. This means that the position of the card $2^a$ --- the target card --- is number $2^{a-1}+1$ from the top. That means the card is $2^{a-1}$ away from the top card, which we turn face up.
   
  Step 4 cycles the cards, and because the face-up card is $2^{a-1}$ cards away from the card the audience remembers, cycling the cards does not change the number of cards between them. 
   Then, as we repeat Step 5, observe by induction that we will always have the face-up card and the target card exactly $2^{i-1}$ cards apart, where $2^i$ is the current size of the deck. If this is true for a given $i$, their positions have the same remainder modulo $2^{i-1}$, and thus they will end up in the same pile together --- and the number of cards in between will be halved, so they will now be exactly $2^{i-2}$ cards apart, finishing the induction.
   
   When we get to the end, the only two remaining cards will, therefore, be the face-up card and the target card.
\end{proof}

We can also generalize this trick to powers with other bases. Fix any natural numbers $b$ and $n$, and any pattern $P$ such that $J_{b^n, b^{n-1}}^P= b^{n}$. For instance, we can take the pattern $P = U^{b-1}D$. The trick is as follows.
\begin{enumerate}
    \item The performer takes $b^n$ cards, shuffles them, and shows the bottom card to the audience.
    \item The performer performs $P$ dealing on the deck, producing a new face-down deck.
    \item The performer flips over the top card of this new deck.
    \item The audience cuts the deck. This can be done multiple times.
    \item While there are more than $b$ cards remaining, the performer deals the cards one at a time into $b$ piles, and eliminates all piles except the one containing the flipped card.
    \item After this, there will be left a pile of $b$ cards. If the flipped card is at the top of the pile, the performer reveals that the bottom card of the pile was the chosen card. Otherwise, the performer reveals that the card right on top of the flipped card was the chosen card.
\end{enumerate}

\begin{theorem}
    `The power of powers of $b$' trick works.
\end{theorem}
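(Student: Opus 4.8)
The plan is to adapt the proof of `The power of powers of two', tracking through every stage the cyclic gap between the face-up card and the target card. First I would number the cards $1,\dots,b^n$ from top to bottom after the shuffle, so the card shown to the audience in Step~1 is card $b^n$; call it the target. Dealing face down according to $P$ in Step~2 produces a pile whose bottom-to-top order is the order in which cards were dealt, i.e.\ it is row $b^n$ of the Josephus triangle $J^P$. Hence the card flipped in Step~3 is card $J^P_{b^n,b^n}$, dealt last and lying on top, while the target $b^n$ is the $b^{n-1}$st card dealt (because $J^P_{b^n,b^{n-1}}=b^n$), so it sits $b^{n-1}$ places from the bottom of the pile --- equivalently, $b^{n-1}$ places above the flipped card, measured cyclically. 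For the concrete choice $P=U^{b-1}D$ the hypothesis $J^P_{b^n,b^{n-1}}=b^n$ needs a one-line justification: in the first round of dealing the $j$th card dealt is card $jb$ for each $j\le b^{n-1}$, so the $b^{n-1}$st is $b^n$; this is also the column-$b^{n-1}$ stabilization value from Proposition~\ref{prop:stabilization}. I would stress that $P$ enters the argument only here; everything afterward is pattern-independent.

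Next I would state and maintain the invariant: whenever the performer holds a pile of $b^m$ cards, the position of the target differs from that of the flipped card by $\pm b^{m-1}\pmod{b^m}$. A cut is a cyclic rotation, shifting every card's index by a fixed amount modulo $b^n$ and so preserving this difference; thus after Step~4 the invariant holds with $m=n$. For the inductive step with $m\ge 2$: since $b^{m-1}\equiv 0\pmod b$, the flipped and target cards lie in positions of the same residue modulo $b$, hence land in the same one of the $b$ piles, so the pile retained in Step~5 still contains both. Dealing $b^m$ cards top-first into $b$ piles sends the card at ($0$-indexed) position $p$ to position $\lfloor p/b\rfloor$ from the bottom of its pile, so inside the retained pile of $b^{m-1}$ cards the difference of their positions becomes the previous difference divided by $b$, up to an overall sign caused by the top-bottom reversal intrinsic to dealing; either way it is $\pm b^{m-2}\pmod{b^{m-1}}$, which is the invariant for $m-1$. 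Iterating down to $m=1$, in the final $b$-card pile the target is at cyclic distance $1$ from the flipped card, i.e.\ it is one of the two neighbours of the flipped card, which is precisely what Step~6 reads off.

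The main obstacle I expect is keeping the orientation straight. Each round of Step~5 swaps ``top'' and ``bottom'' (the first card dealt ends at the bottom of its pile), which is what forces the $\pm$ in the invariant and, more importantly, determines on which side of the flipped card the target ultimately sits --- so one must track this sign (it flips with each round, hence depends on the parity of $n$) and check it against the exact wording of Step~6 (``bottom card if the flipped card is on top, otherwise the card adjacent to it''), also pinning down the handling convention for the piles (cards placed on top, pile picked up without re-reversing), on which the formula $p\mapsto\lfloor p/b\rfloor$ depends. The remaining ingredients --- locating the target through $J^P$, invariance under cuts, and the divide-by-$b$ step --- are routine once coordinates are fixed, so I would spend most of the write-up on the orientation bookkeeping and on the base and terminal cases.
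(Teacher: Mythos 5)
Your route is the same as the paper's: use the hypothesis $J^P_{b^n,b^{n-1}}=b^n$ to place the chosen card at distance $b^{n-1}$ (cyclically) from the flipped top card after the $P$-deal, note that cuts are rotations and preserve this cyclic gap, note that a gap divisible by $b$ puts both cards in the same pile, and that each pile-deal divides the gap by $b$. Where you genuinely fall short is the final read-off. You end with ``the target is one of the two neighbours of the flipped card, which is precisely what Step 6 reads off'' --- but Step 6 names one \emph{specific} card (the card cyclically just above the flipped one: the bottom card if the flipped card is on top, otherwise the card lying directly on it), so the unresolved $\pm$ in your invariant is exactly the part of the statement still owed. You correctly observe that the sign flips at every round (because dealing onto piles reverses order within each pile) and hence depends on the parity of $n$, but you never carry out the check you promise; and carried out honestly, under the standard convention that each dealt card lands on top of its pile, it shows the chosen card ends up cyclically just \emph{above} the flipped card only when $n$ is odd. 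A concrete instance: $b=3$, $n=2$, $P=UUD$. The $P$-deal turns $1,\dots,9$ into the deck $1,7,2,5,8,4,9,6,3$ (top to bottom); flip the $1$; dealing into three piles and keeping the pile with the face-up card leaves $9,5,1$ (top to bottom). The chosen card $9$ is cyclically just \emph{below} the face-up card, whereas the literal Step 6 would name $5$.

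So the orientation bookkeeping you postponed is not a routine tidy-up: it forces either fixing a convention under which each pile preserves deck order (e.g.\ slide each dealt card underneath its pile, or flip the retained pile after each round) or amending the read-off according to the parity of $n$. It is worth noting that the paper's own proof makes the very same leap --- it asserts that the ``later than'' distance is simply divided by $b$ with its direction unchanged, which presupposes such an order-preserving convention --- so your instinct to single out orientation as the main obstacle was sound; the defect in your proposal is that, having flagged it, you waved it through rather than resolving it, and as a result your argument as written establishes only the weaker ``one of the two neighbours'' conclusion rather than the theorem.
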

\begin{proof}
    Since $J_{b^n, b^{n-1}}^P= b^{n}$, after Step 2, the chosen card is now at position $b^{n-1}$ from the bottom, meaning that the chosen card is $b^n - b^{n-1}$ cards later than the flipped card. This will remain true after cutting the deck. Then, at each step of dealing, since the indices of the flipped card and the chosen card are the same mod $b$, they will be placed in the same pile, and the distance between them will be divided by $b$. At the end, they will be in the same pile, with the chosen card $b - 1$ cards later than the flipped card --- if the flipped card is at the top, this means that the chosen card is at the bottom of the pile, and otherwise it means that the chosen card is directly above the flipped card.
\end{proof}

\subsection{Stripes and stripes forever}

We suggest another simple card trick making use of under-down dealing.

\begin{enumerate}
\item Take $2N$ cards, alternating between red and black.
\item Allow the audience to cut the deck as many times as they want.
\item Perform under-down dealing face down to re-order the deck.
\item Split the deck into two equal halves (top and bottom), allow the audience to shuffle both halves, and use the perfect riffle shuffle to recombine them.
\item Allow the audience again to cut the deck as many times as they want.
\item The performer reveals that, despite all these modifications, the deck remains alternating between red and black.
\end{enumerate}

\begin{theorem}
    The `stripes and stripes forever' trick works.
\end{theorem}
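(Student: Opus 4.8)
The plan is to carry a color invariant through the six steps, resting on two elementary facts. \emph{(i)} A cut of a color-alternating deck of \emph{even} length is again color-alternating: viewing the $2N$ cards around a circle gives an alternating even cycle, and reading that cycle linearly from any starting point --- which is exactly what a cut does --- still alternates. \emph{(ii)} A perfect riffle shuffle of a $2N$-card deck split into a top half and a bottom half of $N$ cards each produces a deck in which all odd-position entries come from one half and all even-position entries come from the other; this is the defining feature of the perfect shuffle and holds regardless of how the two halves were internally rearranged beforehand.

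By fact \emph{(i)} the deck is still alternating after the cuts of Step 2, so I relabel its cards $1,2,\dots,2N$ by their positions at that moment; then odd-numbered cards are one color and even-numbered cards are the other. The crucial observation concerns Step 3. In the first round of $UD$-dealing on an even deck of $2N$ cards, the down-moves fall exactly on the even-indexed cards, so the $k$th card dealt is card $2k$ for $k=1,\dots,N$ --- equivalently $J^{UD}_{2N,k}=2k$ for $k\le N$, as noted earlier in the recursions for $J$ --- and the remaining cards $1,3,\dots,2N-1$ are dealt afterward. Hence the first $N$ dealt cards are precisely the even-numbered cards (one solid color) and the last $N$ are precisely the odd-numbered cards (the other solid color); no matter how the dealt pile is stacked, it consists of one solid color of size $N$ adjacent to the other solid color of size $N$, with the boundary sitting exactly at the midpoint, between positions $N$ and $N+1$.

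Now the trick unwinds directly. After Step 3 we have two monochromatic blocks of $N$ cards; since the color boundary is at the midpoint, the split into the top $N$ and the bottom $N$ in Step 4 lands precisely on it, so each half is monochromatic, and the audience shuffling the two halves has no effect on colors. Applying a perfect riffle shuffle to a solid-color half and a differently-solid-color half yields, by fact \emph{(ii)}, a deck with one color on the odd positions and the other on the even positions --- that is, an alternating deck. The cuts of Step 5 preserve this by fact \emph{(i)} again, so the reveal in Step 6 is valid.

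The only point requiring care is the alignment in Step 4: one must check that the $N$/$N$ split coincides with the color boundary created in Step 3, which it does because that boundary sits exactly after the $N$th dealt card. One should also make explicit that ``perfect riffle shuffle'' here means an exact interleaving of the two halves, which is what makes the intermediate shuffling of each half harmless. With those conventions fixed, the remaining verifications are routine.
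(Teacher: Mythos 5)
Your proposal is correct and follows essentially the same argument as the paper: cuts preserve alternation, the first round of $UD$-dealing on an even deck removes exactly the even-indexed (single-color) cards so the dealt pile is two monochromatic blocks of size $N$, and the perfect riffle of the two halves restores alternation. Your version is simply more careful about the block boundary and the stacking order of the dealt pile (and avoids the paper's hard-coded ``26 cards'', which should read $N$).
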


\begin{proof}
    Cutting the deck maintains the property of alternating colors. Performing under-down dealing will deal every even-index card first, so the final dealt pile has 26 cards of one color followed by the cards of the other color. So, when the deck is split and recombined, we will again have alternating colors.
\end{proof}

\subsection{\texorpdfstring{$k$}{k}th time's the charm}

This trick makes use of multiple rounds of the same dealing. Fix any pattern $P$, and let $k$ be the order of the permutation which is the $N$ row of $J^P$ --- i.e., $k$ is the smallest integer such that applying the permutation $k$ times yields the identity. A nice example of practical performance is the SpellUnder-Down dealing, with $N=6$ and $k=2$. The second row of the Josephus triangle is 4, 2, 5, 1, 3, 6, and the order of this permutation is 2. More examples of dealing patterns yielding Josephus permutations of small order can be found in the GitHub repository ``kthtimesthecharm''~\cite{boya2025}.

Unfortunately, for periodic dealings, the phenomena above cannot occur beyond the small decks. For example, the rows $N$, where $N > 3$, in $J^{UD}$ start as 2, 4, and, therefore, they cannot correspond to a permutation of order 2.

The trick proceeds thus.

\begin{enumerate}
    \item The magician takes a deck of $N$ cards, and shows the audience that they are in order. The performer then repeats the following steps $k-1$ times:
    \begin{enumerate}
        \item Deal cards according to $P$, placing each dealt card face up in a pile. The audience will be able to see that the cards are dealt out of order.
        \item Flip the deck over so that it is face down.
    \end{enumerate}
    \item At this point, the audience is given the chance to deal according to $P$. (The performer can claim to have given up on their hopes of getting the trick to work, and sulkily challenge the audience to see if any of them can do better.) The cards are now dealt in order.
\end{enumerate}

\begin{theorem}
The `$k$th time's the charm' trick works.
\end{theorem}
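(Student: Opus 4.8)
The plan is to track, round by round, the permutation undergone by the deck and to show that after a total of $k$ dealings the deck has been sorted back into order.

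First I would fix notation. Describe the deck as a sequence $(c_1,\dots,c_N)$, where $c_i$ is the card currently at position $i$ from the top, and let $\pi$ be the permutation $j\mapsto J^P_{N,j}$, so that by hypothesis $k=\operatorname{ord}(\pi)$. By the definition of the Josephus triangle in Section~\ref{sec:J}, when the deck $(c_1,\dots,c_N)$ is dealt according to $P$, the $j$th card dealt is exactly the one sitting at position $J^P_{N,j}=\pi(j)$; equivalently, the dealt cards, listed from first to last, are $c_{\pi(1)},c_{\pi(2)},\dots,c_{\pi(N)}$. This is the only place the structure of $P$ enters — everything else is bookkeeping about piles.

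That bookkeeping is the one slightly delicate step: I must compute the effect of a single ``deal face up into a pile, then flip the pile over'' operation. Dealing face up puts the first-dealt card at the bottom of the new pile and the last-dealt card on top, so the pile read from top to bottom is $(c_{\pi(N)},\dots,c_{\pi(1)})$, the reverse of the dealt order. Turning the whole pile over reverses the top-to-bottom order a second time, yielding a face-down deck that reads $(c_{\pi(1)},\dots,c_{\pi(N)})$ from the top. Thus one round replaces the deck $c$ by the deck whose position-$j$ card is $c_{\pi(j)}$, i.e. it composes the deck with $\pi$; the reversal introduced by dealing face up is precisely undone by the flip, which is why the flip is part of the routine. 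Iterating from the sorted starting deck $c_i=i$, after $m$ rounds the deck has position-$j$ card equal to $\pi^m(j)$.

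Finally I would assemble the argument. After the $k-1$ rounds of Step~1 the deck's position-$j$ card is $\pi^{k-1}(j)$, and since $\pi^m\neq\operatorname{id}$ for $1\le m\le k-1$ the intermediate deals genuinely scramble the cards, consistent with the performer's patter. In Step~2 the audience performs one more $P$-deal, so the $j$th card dealt is the card at position $\pi(j)$ of that deck, namely $\pi^{k-1}(\pi(j))=\pi^{k}(j)=j$ because $k$ is the order of $\pi$. Hence the cards come out $1,2,\dots,N$ in order, as claimed. The only real obstacle I anticipate is getting the ``face-up deal then flip'' step exactly right — confirming that the two order-reversals cancel so that each round acts cleanly as composition with $\pi$, rather than with $\pi^{-1}$, a conjugate of $\pi$, or $\pi$ times an extra reversal; once that is nailed down, the computation $\pi^{k-1}\circ\pi=\pi^k=\operatorname{id}$ finishes the proof.
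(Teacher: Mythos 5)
Your proof is correct and follows essentially the same route as the paper's: both identify that one ``deal face up, then flip'' round applies the permutation $j \mapsto J^P_{N,j}$ to the deck (the two order-reversals cancelling), and then invoke the fact that this permutation has order $k$. Your version is simply a more explicit writing-out of the same bookkeeping.
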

\begin{proof}
    When Step 1a is performed, cards are dealt in the order $J^P_N$ --- but since they are placed on top of each other, they eventually end up in the reverse order of the Josephus permutation. So, after Step 1b, they end up in the same order as row $N$ of the Josephus triangle. Since this permutation has order $k$, none of the first $k-1$ iterations of the process will put the cards in order, but the $k$th will.
\end{proof}

\begin{example}
     The magician puts the deck of 6 cards face down on the table. Secretly from the audience, the cards are arranged in order from 1 to 6, where 1 is on top. The magician announces, ``In order to be a magician, you need to experiment with the card deck'', and gives the steps of `experimenting'.
     \begin{enumerate}
         \item Put the top card on the bottom of the deck.
        \item Put the top 2 cards on the bottom of the deck.
        \item Put the top 3 cards on the bottom of the deck.
     \end{enumerate}

This ends up creating the same deck from the start, but it sets a start to the story. The magician then continues with the SpellUnder-Down dealing, by saying, ``But magic also needs practice. We must first practice the trick. It may not work the first time''. The dealt cards seem to be in a random order, $(4,2,5,1,3,6)$, confusing the audience. Then the magician flips the deck over to make it face down, and says, ``So we try again, as with more practice, the card trick may work''. The magician performs the SpellUnder-Down dealing. But this time, the cards get revealed as the audience says the number: ``O-N-E'' and then the 1 card is dealt. ``T-W-O'' and the 2 card is dealt, and so on. The magician ends by congratulating the audience on becoming magicians. 
\end{example}

\section{Acknowledgments}

We are grateful to the PRIMES STEP program for allowing us to conduct this research.

------------------------------------------

Existing sequences:

A000012, A000079, A000096, A000225, A001651, A005408, A005843, A006257, A007494, A008585, A008586, A016777, A016789, A032766, A038754, A054995, A062318, A081614, A081615, A088333, A152423, A164123, A181281, A182459, A225381, A291317, A321298, A337191, A360268, A378982.

New sequences:

A378635, A378674, A378682, A380195, A380201, A380202, A380204, A380246, A380247, A380248, A381048, A381049, A381050, A381051, A381114, A381127, A381128, A381129, A381151, A381591, A381622, A381623, A381667, A382354, A382355, A382356, A382358, A382528, A383076, A383845, A383846, A383847, A384770, A384772, A384774, A385327, A385328, A385333, A385513, A386639, A386641, A386643, A386305, A386312.

\end{document}